\documentclass[11pt]{amsart}
\baselineskip=7.0mm
\usepackage{amsmath}
\usepackage{amssymb}
\usepackage{color}

\setlength{\baselineskip}{1.09\baselineskip}

\theoremstyle{plain}
\newtheorem{theorem}{Theorem}[section]
\newtheorem{lemma}[theorem]{Lemma}

\theoremstyle{definition}
\newtheorem{remark}[theorem]{Remark}
\newtheorem{definition}[theorem]{Definition}

\numberwithin{equation}{section}
%\numberwithin{equation}{subsection}

\setlength{\textwidth}{5.8in} \setlength{\textheight}{8.0in}
\hoffset=-0.45truein \voffset=0.1truein

\def\be{\begin{equation}}
\def\ee{\end{equation}}

\begin{document}

\title[Boundary expansion in domains with conic singularities]
{Boundary Expansion for the Loewner-Nirenberg Problem in domains with conic singularities}

%\author{Qing Han}
%\address{Department of Mathematics\\
%University of Notre Dame\\
%Notre Dame, IN 46556} \email{qhan@nd.edu}
%\address{Beijing International Center for Mathematical Research\\
%Peking University\\
%Beijing, 100871, China} \email{qhan@math.pku.edu.cn}

\author{Xumin Jiang}
\address{Department of Mathematics\\
Fordham University\\
The Bronx, NY 10458}  \email{xjiang77@fordham.edu}

\begin{abstract}
We study  asymptotic behaviors of solutions to the 
Loewner-Nirenberg problem in domains with conic singularities and establish asymptotic expansions with respect to two normal directions simultaneously.
The spherical domains over which cones are formed are allowed to have singularities.
An elliptic operator on such spherical domains with coefficients singular on boundary play an important role. Key step is the study of the
eigenvalues growth and 
eigenfunctions 
 estimates.
\end{abstract}

%\date{\today}
\maketitle

\section{Introduction}\label{sec-Intro}

Let $\Omega$ be a bounded domain in $\mathbb R^n$, for some $n\ge 3$. Consider 
\begin{align}
\label{eq-LN-MainEq} \Delta u  &= \frac14n(n-2) u^{\frac{n+2}{n-2}} \quad\text{in }\Omega,\\
\label{eq-LN-MainCond}u&=\infty\quad\text{on }\partial \Omega.
\end{align}
This is the so-called Loewner-Nirenberg problem, also known as the singular Yamabe problem. For a large class of domains $\Omega$, \eqref{eq-LN-MainEq} and \eqref{eq-LN-MainCond} admit a unique positive solution
$u \in C^\infty (\Omega
)$ . Geometrically, $u^\frac{4}{n-2}\sum_{i=1}^n dx^i \otimes dx^i$ is a complete metric with the constant scalar curvature $-n(n-1)$ on $\Omega$.

 In a pioneering work, Loewner and Nirenberg \cite{Loewner&Nirenberg1974} proved that 
\eqref{eq-LN-MainEq} and \eqref{eq-LN-MainCond} admit a unique positive solution $u\in C^\infty(\Omega)$ and studied asymptotic behaviors of  $u$ under the condtion that $\Omega$ is a $C^2$-domain.
Specifically, they proved that for $d(x)$, the distance function in $\Omega$ to $\partial \Omega$, sufficiently small,
\begin{equation}\label{eq-LN-basic}\big|d^{\frac{n-2}2}(x)u(x)-1\big|\le Cd(x),\end{equation}
where $C$ is a positive constant 
depending only on $n$ and the $C^{1,1}$-norm of $\partial\Omega$.  Kichenassamy \cite{Kichenassamy} expanded further if $\Omega$ has a $C^{2,\alpha}$-boundary.
 Mazzeo \cite{Mazzeo1991} and 
Andersson, Chru\'sciel, and Friedrich \cite{ACF1982CMP} proved that solution $u$ of 
\eqref{eq-LN-MainEq}-\eqref{eq-LN-MainCond} is polyhomogeneous if $\Omega$ has a smooth boundary. If $\Omega$ is a Lipschitz domain, Han and Shen \cite{HanShen1} studied asymptotic behaviors of solutions near singular points on $\partial \Omega$, and proved an estimate similar as \eqref{eq-LN-basic}, under appropriate conditions of the domain near singular points.

In this paper, we study a more basic question and investigate asymptotic behaviors of solutions of \eqref{eq-LN-MainEq}-\eqref{eq-LN-MainCond} along two normal directions if $\Omega$ is a finite cone.
Let $T = \mathbb{R}^{n-k} \times
 T_k$, for some $2\leq k\leq n$,  where $T_{k}\subseteq \mathbb{R}^k$ is the Euclidean cone over some smooth spherical domain $S \subsetneq  \mathbb{S}^{k-1}$. We assume that $\Omega \cap B_1 =  T \cap B_1.$

First we have the following spectral theorem and eigenvalue growth estimate for certain singular elliptic operators. The spectral theorem can be derived by classical theory, as shown  in Appendix \ref{sec-Spec}.
\begin{theorem}\label{thm-Eigen-Growth}
Let $(S, g)$ be a Lipschitz $l$ dimensional  Riemannian manifold with codimension one boundary. Assume that $d$ is a Lipschitz defining function of $\partial S$. Then in the space $H^1_0$ with norm 
\begin{align*}
||u||_d := \left\lbrace \int_S \left(|\nabla u|^2 +\frac{u^2(x)}{d^2(x)}\right) dvol \right\rbrace^\frac{1}{2},
\end{align*}
 the operator
\begin{align*}
L[u] := -\Delta_S u+\frac{\kappa}{d^2(x)}u,
\end{align*}
where $\kappa>0$ is a constant, has a complete set of $L^2(S)$-orthonormal eigenfunctions $\{\phi_j\}_{j=1}^\infty$. In addition, if the interior of $S$  is smooth and for any integer $m\in [1, \frac{l}{2}+1]$,
\begin{align}\label{eq-Eigen-Cond}
 |\nabla_S^m d|\leq C(m)d^{1-m}
\end{align}
in $S$,
then the eigenvalues $\lambda_1\leq  \lambda_2\leq \cdots \leq \lambda_i\leq \cdots$ satisfy
\begin{align*}
\lambda_i >C i^\frac{2}{l}
\end{align*}
and the eigenfunctions $\phi_i$'s satisfy
\begin{align*}
|\phi_i(x)| \leq C i^{\frac{l}{4}},
\end{align*}
where $C$ is a positive constant depending only on $l$ and $S$, independent of $i$.
\end{theorem}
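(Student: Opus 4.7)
My plan is to bound $\lambda_i$ from below by reducing the problem to the Dirichlet eigenvalue problem for $-\Delta_S$ on $S$, then invoking Weyl's law. By the min--max characterization,
\begin{equation*}
\lambda_i \;=\; \inf_{\substack{V \subset X \\ \dim V = i}}\; \sup_{u \in V \setminus \{0\}} \frac{\int_S \bigl( |\nabla u|^2 + \kappa\, u^2/d^2 \bigr)\, dvol}{\int_S u^2\, dvol},
\end{equation*}
and since the potential term $\kappa u^2/d^2$ is non-negative, dropping it immediately gives $\lambda_i \geq \tilde\mu_i$, where $\tilde\mu_i$ is the $i$-th eigenvalue of $-\Delta_S$ on the form domain $X$.

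Next I would identify $X$ with $H_0^1(S)$. The inclusion $X \subset H_0^1(S)$ is essentially automatic: membership of $u/d$ in $L^2$ forces the trace of $u$ to vanish on $\partial S$, since otherwise $u/d$ would fail to be square-integrable in any tubular neighborhood of the boundary. The reverse inclusion $H_0^1(S) \subset X$ is Hardy's inequality $\int_S u^2/d^2\, dvol \leq C \int_S |\nabla u|^2\, dvol$, which on a manifold with Lipschitz boundary one can prove using the bound $|\Delta_S d| \leq C$, the case $m = 1$ of \eqref{eq-Eigen-Cond}. Hence $\tilde\mu_i = \mu_i^D$, the $i$-th Dirichlet eigenvalue of $-\Delta_S$ on $S$, and Weyl's law for the $l$-dimensional bounded manifold $S$ with Lipschitz boundary yields $\mu_i^D \geq C_1 i^{2/l}$, completing the estimate.

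The main obstacle is verifying Hardy's inequality and the Weyl asymptotic in this low-regularity setting, with explicit constants depending only on the data in \eqref{eq-Eigen-Cond}. Classical proofs on smooth manifolds go through heat-kernel expansions or Dirichlet--Neumann bracketing on small cubes; the higher-order bounds $d^{m-1}|\nabla_S^m d| \leq C(m)$ for $2 \leq m \leq l/2 - 1$ are precisely the regularity needed to integrate by parts repeatedly and control the remainder terms in such arguments. As a cleaner alternative I would consider the substitution $u = d^\alpha v$ with $\alpha = (1 + \sqrt{1+4\kappa})/2$, which exactly cancels the singular potential and recasts $L$ as a first-order perturbation of the weighted Laplacian on $L^2(S, d^{2\alpha}\, dvol)$; the problem then reduces to a weighted Weyl count, with the conditions on higher derivatives of $d$ supplying the regularity of the first-order coefficient $\alpha\, \Delta_S d / d$ that is needed in the bracketing argument.
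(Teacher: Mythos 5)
Your proposal is correct, but it takes a genuinely different route from the paper. The paper proves a local Weyl-type bound directly on the spectral projector kernel $A_\lambda(x,x) = \sum_{\lambda_i \le \lambda} \phi_i(x)^2$: near $\partial S$ it uses the barrier of Lemma \ref{lem-PhiBD} to get $A_\lambda(x,x)\le M$ for $d < d_0 \sim \lambda^{-1/2}$, while in the interior it rescales by $d(x_0)$, runs iterated $W^{2m,2}$ elliptic estimates on the rescaled equation, and applies a Sobolev-type inequality (H\"ormander's Lemma 17.5.2) to conclude $A_\lambda(x,x) \le C\lambda^{l/2}$; integrating over $S$ gives $N(\lambda) \lesssim \lambda^{l/2}$ and hence the lower bound on $\lambda_i$. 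You instead drop the non-negative potential in the Rayleigh quotient and compare $L$ with the Dirichlet Laplacian via min--max, then invoke the classical Weyl law for $-\Delta_S$. Your route is cleaner and more modular but leans on Weyl's law as a black box, whereas the paper's is self-contained and makes transparent exactly where the hypotheses \eqref{eq-Eigen-Cond} enter: they are used to ensure that the rescaled potential $\kappa\, d(x_0)^2/d(d(x_0)\bar x)^2$ has bounded derivatives in the scaled coordinates, which is what drives the interior $W^{2m,2}$ estimates. In your argument those higher-order bounds on $d$ play no role at all --- indeed you do not even need Hardy's inequality or the full identification $X = H^1_0(S)$, since for the lower bound it suffices that $X \subset H^1_0(S)$ (Claim~1 of Appendix \ref{sec-Spec}), which makes the infimum in the min--max over $X$-subspaces no smaller than the infimum over $H^1_0$-subspaces. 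So your proof actually establishes the conclusion under weaker hypotheses; the attribution of the conditions $d^{m-1}|\nabla^m_S d|\le C(m)$ to Hardy's inequality or to remainder control in Weyl bracketing is misplaced --- in the paper's argument they serve the rescaled elliptic estimates, not Hardy, and in your argument they are simply unused.

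One small presentational point: ``the $i$-th eigenvalue of $-\Delta_S$ on the form domain $X$'' is not quite well-posed unless you first prove $X = H^1_0(S)$, because the quadratic form $\int|\nabla u|^2$ restricted to $X$ need not generate the Dirichlet Laplacian. This is harmless, since the inequality chain $\lambda_i \ge \inf_{V\subset X}\sup_V R_D \ge \inf_{V\subset H^1_0}\sup_V R_D = \mu_i^D$ only uses the inclusion $X\subset H^1_0(S)$, but it is worth phrasing the middle term as a min--max quantity rather than as an eigenvalue.
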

We remark that $\lambda_i < Ci^\frac{2}{l}$ can also be derived through a similar argument as in Li and Yau \cite{LiYau}.

%In the following, we require that $S_{k-1}$ (denoted as $S$, for simplicity)  is star-shaped in $\mathbb{S}^{k-1}$. More precisely, a domain $S\subsetneq \mathbb{S}^{k-1}$ is called star-shaped with center $e$ if $e\in S$, and $x\in S$ implies that the shortest geodesic in $\mathbb{S}^{k-1}$ connecting $e, x$ lies in $S$. In this case, 

According to  \cite{HanShen1}, there is a unique solution $u_T$ to \eqref{eq-LN-MainEq}-\eqref{eq-LN-MainCond} in $\Omega=T$. In addition, $u_T = r^{-\frac{n-2}{2}} u_S$, where $r$ is the radial coordinate of $T_k$, and $u_S$ is defined on $S$, satisfying
\begin{align}\begin{split}\label{eq-uS}
\Delta_{\mathbb{S}^{k-1}}u_S -
\left(\frac{n-2}{2} \right)\left(k-1-\frac{n}{2} \right)  u_S &=\frac{1}{4}n(n-2)  u_S^{\frac{n+2}{n-2}} \text{ in }S,\\
\qquad  u_S &= \infty \text{ on }\partial S.
\end{split}
\end{align}

In this paper, we assume that $T$ is Lipschitz.
For a fixed number $M>0$, denote
\begin{align}\label{eq-TM}
{T}_M= \{(x^\prime_{n-k}, r, \theta)\in T: |x^\prime_{n-k}|< M, 0<r<M, \theta \in S\}.
\end{align} 

%Given a local solution $u$ to \eqref{eq-LN-MainEq} in $T_M$, denote $v= u- u_T$. Then $v$ satisfies
%\begin{align}
%\Delta v - \frac{n(n+2)}{4} u_T^{p-1} v= u_T^{p-2} v^2 \cdot F (u_T^{-1}v), \label{eq-Euc}
%\end{align}
%where $F$ is an analytic function, well defined if $|u_T^{-1}v|<1$. 

Let $(d_S, z_S)$ be the geodesic coordinates of $S$ near $\partial S$, where $d_S = \text{dist}(\cdot , \partial S)$ (modified smooth at points far away from $\partial S$) and $z_S$ denotes the coordinates on $\partial S$.
We are ready to define the boundary expansion (polyhomogeneity) with respect to the $d_S$ coordinate.
\begin{definition}\label{def-exp-ds} For any $b\in \mathbb{N}$,
we say a function $w$ has a boundary expansion of order $d_S^b$ in $T_M$,
 if there are smooth $c_{i, j}$'s  and $R_b$ defined in $T_M$, such that for any fixed $r\in (0, M)$,
 \begin{align}\label{eq-ExpOrdI2}
w= \sum_{l=0}^b  \sum_{m=0}^{N_l} c_{l, m} d_S^l (\log d_S)^m +R_b,
\end{align}
 where $c_{l,m}$'s are independent of $d_S$, and for any $p, q, i, j\in \mathbb{N}, \alpha\in (0,1)$,
\begin{align}
|r^pD_r^p D_{x^\prime_{n-k}}^q D_{z_S}^i c_{l, m} | &\leq C(T, M, l, m, p, q, i),\label{eq-Cij-1}\\
|r^p  D_r^p D_{x^\prime_{n-k}}^q D_{z_S}^i D_{d_S}^j R_{b} | &\leq C(T, M, b, p, q, i, j,  \alpha) d_S^{b+\alpha-j},\label{eq-Rb}
\end{align}
where $z_S$ denotes the coordinates on $\partial S$.  If $d_S$ is far away from $0$, \eqref{eq-Rb} denotes that  $r^pD_r^p D_{x^\prime_{n-k}}^q  R_{b}$ and its covariant derivatives in $S$ are bounded. 
\end{definition}

In this paper, $N_l= \lfloor \frac{l}{n} \rfloor.$ 

Next we define the boundary expansion with respect to the $r$ direction with $O(d_S^\tau)$ coefficients for some $\tau \in \mathbb{R}$.

\begin{definition}\label{def-exp-r}
Given an index set $J \subseteq \mathbb{R}^+$,
We say a function $v$ has a boundary expansion of order $r^a (a \in J)$ with  $O(d_S^\tau)$ coefficients in $T_M$, if there are functions
$\tilde{c}_{i, j}$'s, $\tilde{R}_a$ defined in $T_M$, and an $\epsilon>0$, such that,
\begin{align}\label{eq-Exp-r}
v= \sum_{i\in J, i\leq a}\sum_{j=0}^{\tilde{N}_i} \tilde{c}_{i,j} r^i(\log r)^j +\tilde{R}_a
\end{align}
where $d_S^{-\tau} \tilde{ c}_{i,j}$ and $d_S^{-\tau}  \cdot r^{-a-\epsilon}  \tilde{R}_a$ have boundary expansions up to order $d_S^b$ for any integer $b\in \mathbb{N}$. In addition, $\tilde{c}_{i, j}$'s are independent of $r$. 
\end{definition}
We remark that we can switch the order of $r$ and $d_S$, and  define the boundary expansion of order $d_S^\tau$ with  $O(r^a)$ coefficients similarly. However it is equivalent to Definition \ref{def-exp-r}. $r$ and $d_S$ have the same status in the expansion.

In this paper, $\tau$ could be $\frac{n+2}{2}$ or $\frac{n-2}{2}+n$. Notice that in \eqref{eq-Exp-r}, when $d_S$ is far away from $0$,  $\tilde{c}_{i,j}, \tilde{R}_a$ are smooth in $S$ and the corresponding norms are bounded, in which case, \eqref{eq-Exp-r} is simply an  expansion in the single $r$ direction.

The next is the main theorem of this paper.
\begin{theorem}\label{thm-Main} For an $M>0$,  $n\geq 3$, assume that $u\in C^2(T_M)$ is a positive solution to 
\begin{align}
\label{eq-LN-MainEq1} \Delta u  &= \frac14n(n-2) u^{\frac{n+2}{n-2}} \quad\text{in } T_M,\\
\label{eq-LN-MainCond1}u&=\infty\quad\text{on }\partial T \cap \partial T_M,
\end{align}
 where $T_M$ is defined as \eqref{eq-TM}. Then there is a countable index set $J \subseteq \mathbb{R}^+$, such that for any $a\in J$, $u- u_T$ has an expansion of order $r^a$ with $O(d_S^\frac{n+2}{2})$ coefficients  in $T_\frac{M}{2}$.
\end{theorem}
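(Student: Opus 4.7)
The plan is to linearize around the explicit cone solution $u_T = r^{-(n-2)/2} u_S(\theta)$ and build a Frobenius-type expansion in $r$ whose exponents are determined by the spectrum produced in Theorem~\ref{thm-Eigen-Growth}. Writing $u = u_T + v$ and subtracting the equation for $u_T$ yields
\begin{equation*}
\Delta v - \tfrac{1}{4}n(n+2)\, u_T^{4/(n-2)}\, v = N(v),
\end{equation*}
where $N(v)=O(v^2)$ collects the nonlinear remainder. In coordinates $(x',r,\theta)\in\mathbb{R}^{n-k}\times(0,\infty)\times S$ the identity $r^2\Delta = r^2\Delta_{x'} + (r\partial_r)^2 + (k-2)(r\partial_r) + \Delta_S$, together with $u_T^{4/(n-2)} = r^{-2}u_S^{4/(n-2)}$, turns this into
\begin{equation*}
r^2\Delta_{x'} v + (r\partial_r)^2 v + (k-2)(r\partial_r) v - L_0 v = r^2 N(v), \qquad L_0 := -\Delta_S + \tfrac{n(n+2)}{4}u_S^{4/(n-2)}.
\end{equation*}
Since $u_S \sim d_S^{-(n-2)/2}$ near $\partial S$ makes $u_S^{4/(n-2)} \sim d_S^{-2}$, the operator $L_0$ fits the hypotheses of Theorem~\ref{thm-Eigen-Growth} exactly.

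Next I would invoke Theorem~\ref{thm-Eigen-Growth} to produce an $L^2(S)$-orthonormal basis $\{\phi_i\}$ of $L_0$-eigenfunctions with eigenvalues $\lambda_i \to \infty$. For each $i$ the radial indicial equation $\gamma^2 + (k-2)\gamma = \lambda_i$ has a positive root $\gamma_i^+$, and I would take $J$ to be the set of positive reals obtained by closing $\{\gamma_i^+\}$ under addition with $2\mathbb{N}$ (forced by $r^2\Delta_{x'}$) and the sums dictated by $N(v)$. A formal expansion $v \sim \sum_{a\in J}\sum_j \tilde c_{a,j}(x',\theta)\,r^a(\log r)^j$ is then built inductively: equating powers of $r(\log r)^j$, each $\tilde c_{a,j}$ must satisfy on $S$ a linear equation $(L_0 - a(a+k-2))\tilde c_{a,j} = F_{a,j}$, with $F_{a,j}$ depending on strictly lower-order data. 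At resonances, where $a(a+k-2)\in\operatorname{spec}(L_0)$ and $F_{a,j}$ has a nonzero projection onto the corresponding eigenspace, a $\log r$ factor must be introduced in the standard Frobenius fashion.

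The step I expect to be the principal obstacle is showing that every $\tilde c_{a,j}$ admits the polyhomogeneous expansion in $d_S$ required by Definition~\ref{def-exp-ds}, and in particular decays like $d_S^{(n+2)/2}$ at $\partial S$. I would first apply Theorem~\ref{thrm-LN-Main-Intro} to~\eqref{eq-uS} to put $u_S$ into polyhomogeneous form at $\partial S$, so that the potential in $L_0$ becomes a model $d_S^{-2}$ singularity with explicit polyhomogeneous corrections. The gain of $d_S^{(n+2)/2}$, rather than the bare $d_S^{-(n-2)/2}$ matching the growth of $u_T$, arises from cancellation: applying Theorem~\ref{thrm-LN-Main-Intro} to $u$ on the smooth part of $\partial T$ away from the tip, the coefficients $c_1,\dots,c_{n-1}$ in the normal expansion of $d^{(n-2)/2}u$ are locally determined by the boundary geometry and hence agree with those for $u_T$, so that the first genuine discrepancy of $u-u_T$ appears at order $d^{(n+2)/2}$. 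Propagating this cancellation through the Frobenius recursion requires weighted Schauder estimates for the shifted operators $L_0 - a(a+k-2)$ on $S$; since $a>0$ these remain positive on the relevant weighted Sobolev spaces, and invertibility with the required $d_S$-decay follows from Theorem~\ref{thm-Eigen-Growth} together with standard boundary regularity for the model $-\Delta_S + \kappa/d_S^2$.

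Truncating the expansion at order $r^a$ leaves a remainder $\tilde R_a$ satisfying
\begin{equation*}
\big[r^2\Delta_{x'} + (r\partial_r)^2 + (k-2)(r\partial_r) - L_0\big]\tilde R_a = O\!\left(r^{a+\epsilon}\, d_S^{(n+2)/2}\right).
\end{equation*}
To close, I would build explicit barriers of the form $C\, r^{a+\epsilon}\, d_S^{(n+2)/2}$ and apply the maximum principle on $T_{M/2}$. The conic faces $\partial T\cap\partial T_M$ are handled by the blow-up of $u$, while on the artificial faces $|x'|=M/2$ and $r=M/2$ one uses interior Schauder estimates in the rescaled variables $(x'/r,\log r,\theta)$, which convert the operator into a uniformly elliptic one on cylindrical slabs. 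The derivative and mixed regularity statements in Definitions~\ref{def-exp-ds} and~\ref{def-exp-r} then follow by differentiating the equation and bootstrapping.
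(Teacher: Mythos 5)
Your outline matches the paper's overall strategy at the level of linearizing around $u_T$, diagonalizing $L_S$ on $S$ via Theorem~\ref{thm-Eigen-Growth}, and solving a radial indicial equation to produce the index set, but the technical engine of the paper's proof is different from what you propose and the gaps in your sketch are at exactly the places where that engine does the work.

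The paper does not run a formal Frobenius recursion and then justify it by barriers. Instead it first establishes (Theorem~\ref{thm-VExp}, which rests on Theorem~\ref{thm-v-r-ds}) that $d_S^{-\frac{n+2}{2}}v$ has a boundary expansion of \emph{every} order $d_S^b$ with estimates uniform in $r$ after the scaling $t=r/r_0$; your proposal folds this into a one-line ``cancellation of $c_1,\dots,c_{n-1}$'' heuristic, but the uniformity in $r$ is the hard part and is obtained by a careful shifting/maximum-principle argument in the variables $(x'_{n-k},t,\theta)$, not by citing Theorem~\ref{thrm-LN-Main-Intro} on a slice. Once that is in hand, the paper writes each modal coefficient $A_i$ \emph{explicitly} by variation of parameters (equations~\eqref{eq-ODEGenSol}--\eqref{eq-Ai}) and then sums in $i$. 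The point of the eigenvalue growth estimate is not the invertibility of $L_S - a(a+k-2)$ on weighted spaces, as you suggest, but the $\ell^2$ summability in $i$ of the resulting series (Theorem~\ref{thm-IntF}): one needs $|F_i|\lesssim \lambda_i^{-N/2}$ with $N>\tfrac{l}{2}$ from repeated integration by parts against $T=(-L_S)^{1/2}$, and then $\sum_i \lambda_i^{-N} \lesssim \sum_i i^{-2N/(k-1)}<\infty$ by $\lambda_i\gtrsim i^{2/(k-1)}$. Your sketch never invokes the growth rate in this quantitative role, and without it the ``formal expansion'' you substitute has no reason to converge as a function of $\theta$.

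Second, the closing barrier argument you propose, $\tilde R_a \le C r^{a+\epsilon}d_S^{(n+2)/2}$ via a single supersolution, is not what the paper does and by itself cannot deliver what Definition~\ref{def-exp-r} requires, namely uniform control of $r^pD_r^pD_{x'}^q D_{z_S}^i D_{d_S}^j$ applied to $d_S^{-\frac{n+2}{2}}r^{-a-\epsilon}\tilde R_a$ for \emph{all} $p,q,i,j$. The paper instead reads off $\tilde R_a$ from the explicit formula, proves $L^2(S)$ bounds on $\tilde R_a$ and $L_S\tilde R_a$ together with $rD_r$ and $D_{x'}$ derivatives (the estimates \eqref{eq-Hlm-mp}), and then runs the bootstrap of Lemma~\ref{lem-wExp} (interior $W^{2m,2}_G$ estimates in the rescaled metric $G=d_S^{-2}g_S$, Sobolev to $L^\infty$, and finally the same rescaling-plus-ODE-iteration machinery of Theorem~\ref{thm-VExp}) to get the full $d_S$-polyhomogeneity. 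A maximum-principle barrier gives an $L^\infty$ bound but not the mixed derivative estimates; you would still need an analogue of Lemma~\ref{lem-wExp} to promote it, and to even apply Lemma~\ref{lem-wExp} you need the $L^2$-bounds produced by the spectral formula, not by a comparison function.

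Finally, a small but real mismatch: the operator $L_0$ you write down does not literally have the form $-\Delta_S + \kappa/d_S^2$; the paper has to replace $d_S$ by $\tilde d_S = u_S^{-2/(n-2)}$ (which differs from $d_S$ by a $C^{n-1,\alpha}$ factor equal to $1+O(d_S)$) before Theorem~\ref{thm-Eigen-Growth} applies, and then verify \eqref{eq-Eigen-Cond} for this modified defining function. Your phrase ``fits the hypotheses exactly'' skips that step. None of these gaps are conceptual dead ends, but as written the proposal is a plausible narrative rather than a proof: you would need the uniform-in-$r$ version of the $d_S$-expansion, the quantitative use of the eigenvalue growth for $L^2$-convergence of the modal sums, and the elliptic bootstrap replacing the barrier at the last step.
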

%It allows us to expand the solutions on the two normal directions $r$ and $d_S$ at the same time.

A corollary of
Theorem \ref{thm-Main}  is that
\begin{align*}
u-u_T= O(r^{j_1}d_S^\frac{n+2}{2})
\end{align*}
where $j_1$ is the smallest element in $J$, which can be computed from  the first eigenvalue of $L_S$.

We now briefly describe  the proof of Theorem \ref{thm-Main}. 
Denote $v= u-u_T$. Then $v$ satisfies the equation
\begin{align}
r^2\Delta_{\mathbb{R}^{n-k}}v+ Nv +L_{S}  v=r^\frac{n-2}{2} u_S^{\frac{6-n}{n-2}} v^2 \cdot F (u_T^{-1}v), \label{eq-polar}
\end{align}
where $F$ is an analytic function, and
\begin{align}
Nv &=  r^2 v_{rr}+(k-1)rv_r,\nonumber\\
L_S v &= \Delta_{S} v - \frac{n(n+2)}{4}  u_S^{\frac{4}{n-2}} v.\label{eq-LS}
\end{align}
In $T_\frac{M}{2}$, we first prove that $v$ is bounded. Then around any fixed $P\in T_\frac{M}{2}$,  we apply the rescaling $t=r/r(P)$, under which the main equation \eqref{eq-polar} is only singular in the $d_S$ direction. So we can apply teniques in  \cite{Mazzeo1991}, \cite{ACF1982CMP} or \cite{HanJiang} to derive that $d_S^{-\frac{n+2}{2}} v$ has a boundary expansion of order $d_S^b$ for any $b\in \mathbb{N}$, in the sense of Definition \ref{def-exp-ds}. In $T_\frac{M}{2}$, for any fixed $x^\prime_{n-k}, r$,  the spectral theorem implies that,
\begin{align}\label{eq-v-phi}
v= \sum_i^\infty A_i(x_{n-k}^\prime, r)\phi_i(\theta),
\end{align}
where the Fourier coefficients $A_i$'s satisfy
 ODE's of form
\begin{align*}
r^2 A_i^{\prime\prime}+(k-1) rA_i^\prime -\lambda_i A_i=  \tilde{F}_i.
\end{align*}
Solving the ODE, and plugging into \eqref{eq-v-phi}, we derive an expansion of $v$ of form \eqref{eq-Exp-r}, where the coefficients $\tilde{c}_{i,j}$ and remainder $\tilde{R}_a$ satisfy  equations of form
\begin{align}\label{eq-l1l2}
 r^2 w_{rr} + l_1 rw_r+ l_2 w+ L_S w= \tilde{F},
\end{align}
for some $l_1, l_2\in \mathbb{R}$. This is sufficient to show that they have expansions in the $d_S$ direction in the sense of Definition \ref{def-exp-ds}, and derive  Theorem \ref{thm-Main}.

In Section \ref{sec-Liou}, we take the Liouville's equation as an example to interpret Theorem \ref{thm-Main}. In Section \ref{sec-comp}, we show that $u-u_T$ is bounded. In Section \ref{sec-exp-ds}, we show the expansion of $u-u_T$ in the $d_S$ direction. In Section \ref{sec-EigenG}, we prove Theorem \ref{thm-Eigen-Growth}. In Section \ref{sec-SmoothS}, we prove Theorem \ref{thm-Main}.

Thanks to Zheng-Chao Han and Yalong Shi for helpful discussions.

\bigskip
\section{An example: Liouville's equation}\label{sec-Liou}

In this section, we study the Liouville's equation with conic singularities.
\subsection{Smooth case}
Consider the following problem
\begin{align}\begin{split} \label{eq-LiouEq}
\Delta  u&= e^{2u} \text{  in  } \Omega,\\
u&=\infty \text{  on  } \partial \Omega.
\end{split}
\end{align}
Geometrically, $e^{2u}(dx^1\otimes dx^1+dx^2\otimes dx^2)$ is a complete metric with constant Gauss curvature $-1$ on $\Omega.$
By maximum principle,
\begin{theorem}
\label{Thm-LocTanAnaly}
Let $\Omega$ be a bounded domain in $\mathbb{R}^2$ and $\partial\Omega$ be a $C^{1,\alpha}$ near $x_0\in \partial \Omega$ for some $\alpha \in (0,1]$. Suppose $u \in C^{\infty}(\Omega)$ is a solution of \eqref{eq-LiouEq}. Then,
\begin{align}\label{eq-TanEsmNoN}
|u+\log d|\leq d^\alpha \text{\,\,in  } \Omega\cap B_r(x_0),
\end{align} 
where $d$ is the distance to $\partial \Omega$, and $r$ and $C$ are possible constants depending only on $\alpha$ and the geometry of $\Omega.$
\end{theorem}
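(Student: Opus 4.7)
My plan is to prove this bound by a comparison argument with super- and sub-solutions of the form $-\log d\pm K d^\alpha$. The guiding model is that on the half-plane $\{x_2>0\}$, the exact solution of $\Delta u=e^{2u}$ blowing up on $\{x_2=0\}$ is $u_0(x)=-\log x_2$, so $-\log d$ is the correct leading behavior; only the next-order correction needs to be quantified. A preliminary bound $|u+\log d|\le C_0$ near $x_0$ follows from comparing $u$ with Loewner--Nirenberg solutions on disks (or half-planes) that are interior and exterior to $\Omega$ and adapted to the $C^{1,\alpha}$-boundary.

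The heart of the proof is a direct barrier computation. For $V_\pm:=-\log d\pm K d^\alpha$, one obtains
\[
\Delta V_\pm - e^{2V_\pm} = \frac{|\nabla d|^2-1}{d^2} - \frac{\Delta d}{d} \pm K[\alpha(\alpha-1)-2]d^{\alpha-2} \pm K\alpha d^{\alpha-1}\Delta d + O(K^2 d^{2\alpha-2}).
\]
Since $|\nabla d|=1$, the first term vanishes, and using the bound $|\Delta d|\le Cd^{\alpha-1}$ available for $C^{1,\alpha}$-boundaries (via a regularized defining function $\rho\sim d$ with $|D^2\rho|\le C\rho^{\alpha-1}$, the constants depending only on $\alpha$ and $\|\partial\Omega\|_{C^{1,\alpha}}$), all remaining terms except the principal $\pm K[\alpha(\alpha-1)-2]d^{\alpha-2}$ are $O(d^{\alpha-2})$. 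Since $\alpha(\alpha-1)-2<0$, taking $K$ sufficiently large ensures the coercive sign---negative for $V_+$, positive for $V_-$---so that $V_+$ is a classical super-solution and $V_-$ a sub-solution of $\Delta u=e^{2u}$ on a uniform neighborhood $\Omega\cap B_{r_0}(x_0)$.

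The comparison step requires care since $u$ and $V_\pm$ both blow up on $\partial\Omega$. A rescaling $\tilde u_R(x):=u(x_0+Rx)+\log R$ satisfies the same equation on $(\Omega-x_0)/R$; as $R\to 0$ the rescaled domain converges to the tangent half-plane and, by compactness and uniqueness for the model problem, $\tilde u_R\to -\log x_2$ locally, yielding the qualitative convergence $u+\log d\to 0$ on $\partial\Omega$. Applying the comparison principle for $\Delta u=e^{2u}$ to $u$ and $V_\pm$ on $\Omega\cap B_{r_0}(x_0)$, and using the qualitative limit to handle the boundary matching, one iterates the resulting estimate through the singular linear operator $\Delta-2/d^2$ (whose indicial exponents $2$ and $-1$ allow the boundary-decay rate of the right-hand side to propagate to $w:=u+\log d$, essentially doubling at each step until it reaches $\alpha$) to obtain the sharp bound $|u+\log d|\le Cd^\alpha$. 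The main obstacle is the interplay of limited boundary regularity with the singular barriers: since $d$ is only $C^{1,\alpha}$, the barrier computation must be performed via a smoothed distance, and both the perturbation errors (from $|\nabla d|^2-1$ and $\Delta d$) and the nonlinear correction from $e^{2V_\pm}$ must remain within the $O(d^{\alpha-2})$ budget that the coercive coefficient $K[2-\alpha(\alpha-1)]>0$ can absorb; this is precisely where the Hölder exponent $\alpha$ of $\partial\Omega$ enters the final estimate.
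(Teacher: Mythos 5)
The paper gives no detailed proof of this statement beyond the phrase ``by maximum principle,'' and your barrier-comparison plan is indeed the intended argument: after regularizing the distance so that $|D^2\rho|\le C\rho^{\alpha-1}$ and $|\nabla\rho|^2-1=O(\rho^\alpha)$, the coercive term $\mp K[2-\alpha(\alpha-1)]\rho^{\alpha-2}$ in $\Delta V_\pm - e^{2V_\pm}$ dominates the $O(\rho^{\alpha-2})$ errors from $\Delta\rho/\rho$, from $(|\nabla\rho|^2-1)/\rho^2$, and from the nonlinear remainder $O(K^2\rho^{2\alpha-2})$ once $K$ is large and the neighborhood is small. Up to that point your proposal is correct.

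Two remarks, one a genuine confusion and one a matter of rigor. First, the closing sentence about iterating through $\Delta - 2/d^2$ ``essentially doubling at each step until it reaches $\alpha$'' is out of place: the barriers you constructed already carry the target exponent $\alpha$, so once the comparison is closed you have $|u+\log d|\le Kd^\alpha$ in a single step and there is nothing to bootstrap. The indicial-root iteration you describe is a different mechanism, used when one must climb from a weaker a priori decay to the sharp rate; mixing it in here suggests a misreading of how the argument terminates. Second, the boundary-matching step is sketched too loosely. The cleanest route is to observe that $\Delta u=e^{2u}$ has a strictly increasing nonlinearity, so the comparison principle on $\Omega\cap B_{r_0}(x_0)$ holds as soon as one knows $\limsup_{d\to 0}(u-V_+)\le 0$; this is obtained, without an unquantified compactness limit, by the Loewner--Nirenberg device of comparing on the inward-shifted set $\{d>\epsilon\}$ (or against a barrier blowing up on $\{d=\epsilon\}$) and letting $\epsilon\to 0$. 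Your rescaling heuristic points in the right direction, but a careful write-up should replace it with this translation argument.
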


Denote $v=u+\log d$. Then $v$ satisfies the equation,
\begin{align}\begin{split} \label{eq-MainV}
\Delta  v-\frac{2v}{d^2}= d^{-2} (e^{2v}-1-2v)+\Delta d \frac{1}{d}.
\end{split}
\end{align}

By \cite{HanJiang}, if locally $\partial \Omega$ is smooth, then
\begin{align*}
u=-\log d +\frac{1}{2}\kappa d +\sum_{i=2}^k c_{i} d^i+R_k,
\end{align*}
where $\kappa$ is the curvature of the boundary curve. 
The remainder $R_k$ is $C^{k,\epsilon}$ for any $\epsilon \in (0, \alpha)$, and
\begin{align*}
R_k=O(d^{k+\alpha}).
\end{align*}

If a portion of $\partial \Omega$ is a straight segment, we know that by \cite{HanJiang1}, around this boundary segment, $u+\log d$ is analytic in $d$ and the boundary segment coordinate, i.e.,
\begin{align}\label{eq-u-Logd-K-0}
u\equiv-\log d +c_2d^2+c_3d^3 +\cdots,
\end{align}
where the coefficients $c_2, c_3, \cdots$ are analytic functions on the boundary segment.

\bigskip
\subsection{Singular planer case}

First consider the case when locally $\bar{\Omega}$ coincides with the first quadrant in $\mathbb{R}^2$. Under the conformal transformation $\tilde{z}=z^2$, the local boundary portion is mapped to a straight segment, while the Liouville's equation is kept invariant. So we know by \eqref{eq-u-Logd-K-0}, the solution is
\begin{align*}
u\equiv\frac{1}{2}\log (\frac{1}{x^2}+\frac{1}{y^2})+c_2 \tilde{y}^2+c_3\tilde{y}^3+\cdots+ c_i\tilde{y}^i+\cdots,
\end{align*} 
which is an analytic convergent series, where
\begin{align*}
\tilde{y}=2xy,
\end{align*}
and $c_i$'s are analytic in $\tilde{x}=x^2-y^2$. We can also express the solution as
\begin{align*}
u=\frac{1}{2}\log (\frac{1}{x^2}+\frac{1}{y^2})+x^2 y^2 \overline{R},
\end{align*}
where $\overline{R}$ is analytic in $x,y$.

If locally $\bar{\Omega}$ is a sector with angle $\mu \pi$ centered at the origin, i.e. $\bar{\Omega}=\{0\leq \theta \leq \mu \pi\}$ under the polar coordinates, then by the conformal transformation $\tilde{z}=z^{\frac{1}{\mu}}$, we know the solution
\begin{align*}
u\equiv \frac{1}{2} \log \left( \frac{1}{4\mu^2} \cdot \left( \frac{1}{\tilde{x}^2} + \frac{1}{\tilde{y}^2} \right) \right)+ c_2\tilde{y}^2 +\cdots+ c_i \tilde{y}^i +\cdots,
\end{align*}
where $\tilde{y}=r^\frac{1}{\mu} \sin \frac{\theta}{\mu}$ and $c_i$'s are analytic in $\tilde{x}=r^{\frac{1}{\mu}}\cos \frac{\theta}{\mu}$. 

\begin{remark} We can use the coordinates $\bar{x}= r^{\frac{1}{2\mu}}\cos \frac{\theta}{2\mu}, \bar{y}= r^{\frac{1}{2\mu}}\sin \frac{\theta}{2\mu}$, then
\begin{align*}
u+\log(\mu r \sin (\frac{\theta}{\mu}))=O(\bar{x}^2\bar{y}^2)
\end{align*}
and is analytic in $\bar{x}, \bar{y}$. 
\end{remark}

However, this method does not work for the equation \eqref{eq-LN-MainEq}-\eqref{eq-LN-MainCond} as in general we do not have a conformal transformation that maps an Euclidean cone to the upper half plane in $\mathbb{R}^n$ ($n\geq 3$).

\bigskip
\subsection{Singular planer case under polar coordinates}
We study the Liouville's equtaion under the polar coordinates. Consider the case that near a boundary point, say the origin, $\Omega$ coincides with the first quadrant in $\mathbb{R}^2$ near the origin. We check that $u_T = \log (\frac{1}{2}r\sin (2\theta))= \log (\frac{1}{x^2}+\frac{1}{y^2})$ is a solution in  $T=\{x>0, y>0\}$.

Assume that $u$ is a solution to  Liouville's equation in $T_M = T\bigcap B(O, M)$, for some constant $M>0$. Denote $v=u - u_T$. Then $v=O(dist(\cdot , \partial T))$ by \cite{HanShen}, and satisfies 
\begin{align*}
v_{xx}+ v_{yy}- \frac{2}{x^2}v-\frac{2}{y^2}v= (\frac{1}{x^2}+\frac{1}{y^2})v^2F(v),
\end{align*}
 in a domain $T_M $, where 
\begin{align*}
F(v)= \frac{e^v-1-v}{v^2}= \frac{1}{2!}+ \frac{v}{3!}+\frac{v^2}{4!}+\cdots
\end{align*} 
  is analytic in $v$. 
Under the polar coordinates, we have
\begin{align}\label{eq-RTheta}
r^2 v_{rr}+r v_r + v_{\theta \theta} -\frac{8}{\sin^2 (2\theta)} v= \frac{4 v^2}{\sin^2 2 \theta} F(v).
\end{align}
For any fixed $r\in (0, M)$, $v=O(\sin^2 (2\theta))$ and is smooth on $[0, \frac{\pi}{2}]$. In fact, by maximum principle, we can show that
\begin{align*}
|v| \leq C \sin^2(2\theta),
\end{align*}
where $C$ is independent of $r$.

Denote $L_\theta= D_{\theta\theta}-\frac{8}{\sin^2 2\theta}$. We consider 
\begin{align*}
X=\{w\in H^1[0, \frac{\pi}{2}]:\int_0^\frac{\pi}{2}\frac{w^2}{\sin^2 (2\theta)} d\theta < \infty\}.
\end{align*}
By appendix \ref{sec-Spec}, and through computation, we see that on $X$, $- L_\theta$ has eigenvalues $\lambda_l= 4l^2$ ($l\geq 2$), and  the corresponding eigenfunctions $\phi_l$'s of form, 
\begin{eqnarray*}\phi_{l}(\theta)=
\begin{cases}
\sum_{j=2}^{l} A_{l, j} \sin^{j} (2\theta), & \text{for }  l \text{ even}\cr \sum_{j=2}^{l-1} B_{l,j} \sin^{j} (2\theta) \cos(2\theta), & \text{for } l \text{ odd}\end{cases}
\end{eqnarray*}
where the coefficients $A_{l,j}, B_{l,j}$ can be derived through formal computation. The first two eigenfunctions are \begin{align*}
\phi_{2}=\sin^{2} (2\theta),\quad \phi_{3}=\sin^{2} (2\theta)\cos(2\theta).
\end{align*} All of the eigenfunctions are  $O(\sin^2(2\theta))$ as $\theta\rightarrow 0^+$ or $\frac{\pi}{2}^-$.

Applying the techniques in the following sections, we can derive a theorem similar to Theorem \ref{thm-Main}.
\begin{theorem} For an $M>0$, assume that $u \in C^2(T\cap B(O, M))$ is a solution to 
\begin{align*}
 \Delta u  &= e^u \quad\text{in } T\cap B(O, M),\\
u&=\infty\quad\text{on }\partial T \cap B(O, M),
\end{align*}
 where $T$ is the first quadrant in $\mathbb{R}^2$. Then there is a  set $\{N_l\}\subseteq \mathbb{N}$, such that for any $b \in \mathbb{N}$, $u+ \frac{1}{2} \log (\frac{1}{x^2}+\frac{1}{y^2})$ has the boundary expansion,  for $r\in (0, \frac{M}{2})$,
 \begin{align}\label{eq-UExp-dim2}
u+\frac{1}{2}\log(\frac{1}{x^2}+\frac{1}{y^2}) =c_4(\theta) r^4+ \sum_{l=3}^{b}\sum_{j=0}^{N_l} c_{2l,j}(\theta)r^{2l} (\log r)^j+ R_{2b}(r,\theta),
 \end{align}
 where $c_4$ and $c_{i,j}$'s are smooth for $\theta\in [0,\frac{\pi}{2}]$, all of which are
$
O(\sin^2(2\theta)),
$ as $\theta \rightarrow 0$ or $\frac{\pi}{2}$. In addition, for any $\alpha \in (0,1)$, $p, j \in \mathbb{N}$, 
 \begin{align*}
|r^p  D_r^p  D_{\theta}^j \left( r^{-2b-\alpha}  (\sin 2\theta)^{-2} R_{2b}\right)| &\leq C(T, M, b, p,  j,  \alpha),
\end{align*}
in $T_\frac{M}{2}$.
\end{theorem}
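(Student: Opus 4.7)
The plan is to specialize the general scheme outlined for Theorem~\ref{thm-Main} to the two-dimensional model. Set $v = u - u_T$ with $u_T = \log\bigl(\tfrac{1}{2} r\sin 2\theta\bigr)$; then $v$ satisfies equation \eqref{eq-RTheta}, which reads
\[
r^2 v_{rr} + r v_r + L_\theta v = \frac{4 v^2}{\sin^2 2\theta}\,F(v),
\qquad L_\theta = D_{\theta\theta} - \frac{8}{\sin^2 2\theta}.
\]
Following the argument of Section~\ref{sec-comp} (adapted via multiples of $\sin^2 2\theta$ as sub/supersolutions), I would first establish the a priori bound $|v(r,\theta)| \le C\sin^2 2\theta$ uniformly in $r\in (0,M/2)$. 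This already identifies the leading corner behavior and keeps the nonlinear term subordinate, since $v^2/\sin^2 2\theta = O(\sin^2 2\theta)\cdot v$.

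Next, for each fixed $r$, expand $v$ in the $L^2$-orthonormal eigenbasis of $-L_\theta$ on the space $X$ furnished by Theorem~\ref{thm-Eigen-Growth}. The explicit computation recorded just above gives eigenvalues $4i^2$ ($i\ge 2$) with eigenfunctions $\phi_i = O(\sin^2 2\theta)$. Writing
\[
v(r,\theta) = \sum_{i\ge 2} A_i(r)\,\phi_i(\theta)
\]
and projecting the PDE against $\phi_i$ reduces it to the family of inhomogeneous Euler ODEs
\[
r^2 A_i''(r) + r A_i'(r) - 4 i^2 A_i(r) = F_i(r),
\]
whose homogeneous solutions are $r^{\pm 2i}$. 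Boundedness of $v$ as $r\to 0$ selects the regular mode $r^{2i}$, which is why \eqref{eq-UExp-dim2} begins at $r^4$ (arising from $\phi_{-16} = \sin^2 2\theta$, $i=2$).

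Solving these ODEs order by order, the leading term $c_4(\theta) r^4$ is produced by the $i=2$ mode, and the higher $c_{2l,j}(\theta) r^{2l}(\log r)^j$ come next. Logarithmic powers appear exactly when $F_i$ resonates with the homogeneous solution $r^{2i}$, forcing a $\log r$ in the particular solution; tracking how many resonances can accumulate up to order $r^{2l}$ determines $N_l$. The truncation
\[
R_{2b} := v - c_4(\theta) r^4 - \sum_{l=3}^{b}\sum_{j=0}^{N_l} c_{2l,j}(\theta)\, r^{2l}(\log r)^j
\]
then satisfies an equation of the form $r^2 w_{rr} + r w_r + L_\theta w = \tilde{F}$ with $\tilde F$ of controlled order as $r\to 0$. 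Applying the rescaling $t = r/r(P)$ about $P\in T_{M/2}$ as in Section~\ref{sec-Pre} converts the equation on balls $B(P, d(P)/2)$ into a uniformly elliptic problem singular only in the $\sin 2\theta$ variable; the $d_S$-direction expansion technology of Section~\ref{sec-exp-ds} then delivers the weighted remainder estimate for $R_{2b}$ together with the smoothness of each $c_{2l,j}$ on $[0,\pi/2]$ and its $O(\sin^2 2\theta)$ vanishing at the corners.

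The main obstacle is controlling the feedback of the quadratic nonlinearity through the iteration: each solved ODE contributes new terms to $\tilde F$ via $v^2 F(v)/\sin^2 2\theta$, and one must verify these preserve both the $r$-order structure \emph{and} the $O(\sin^2 2\theta)$ vanishing at $\theta=0,\pi/2$ that is needed for every Fourier projection to converge and for every $F_i$ to inherit the correct decay. The remedy, mirroring the general sketch, is to first prove the full $d_S$-direction expansion of $v$ to arbitrary order (the two-dimensional analogue of Section~\ref{sec-exp-ds} with $d_S \sim \sin 2\theta$); once this is in hand, the order-by-order ODE argument in $r$ closes and yields the expansion \eqref{eq-UExp-dim2} with the stated remainder bound.
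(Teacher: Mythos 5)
Your proposal matches the paper's approach essentially verbatim: the paper itself only remarks ``Applying the method in the following sections, we can derive a theorem similar to Theorem~\ref{thm-Main},'' and your sketch is precisely that general scheme — comparison with $u_T$, the a priori bound $|v|\le C\sin^2 2\theta$, the $d_S$-direction expansion of Section~\ref{sec-exp-ds}, the spectral decomposition in $L_\theta$, and the order-by-order Euler ODE iteration in $r$ with resonance-generated logarithms — specialized to $k=n=2$ with $\lambda_i=4i^2$ and $d_S\sim\tfrac12\sin 2\theta$. One small imprecision: boundedness of $v$ as $r\to 0$ only kills the $r^{-2i}$ modes; the coefficient of the regular $r^{2i}$ mode is fixed by the boundary data at $r=r_0$ (as in equation~\eqref{eq-Ai}), not by boundedness alone, but this does not affect the validity of the argument.
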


Here the powers of $r$ are even numbers, as the eigenvalues are explicit and special. There are logarithmic terms on the right hand side of \eqref{eq-UExp-dim2}, as we worked with a general smooth function $F(v)$ in \eqref{eq-RTheta}. We can discuss about how to get ride of the logarithmic terms in \eqref{eq-UExp-dim2} using formal computation, but we skip it as it's not the main concern of this paper.

%Q1: No log terms?

%Q2: Smooth in $x,y$?

%Q3: A deciding $c_2$ term?

\bigskip
\section{First comparison with solutions in infinite cones}\label{sec-comp}
 Let $T, S, T_M$ be defined as in the introduction.
Acording to \cite{HanShen1}, if $T$ is Lipschitz,  there exists a unique solution $u_T$ to the Loewner-Nirenberg problem on $T$, such that $u_T(x^\prime_{n-k} ,r, \theta)= r^{-\frac{n-2}{2}}u_S(\theta)$, where $u_S$ satisfies
\eqref{eq-uS}.

Assume that we have a local $C^2$ solution $u$ to \eqref{eq-LN-MainEq}.
 Denote $v= u- u_T$. Then $v$ satisfies
\begin{align}
\Delta v - \frac{n(n+2)}{4} u_T^{\frac{4}{n-2}} v= u_T^{\frac{6-n}{n-2}} v^2 \cdot F (u_T^{-1}v), \label{eq-Euc}
\end{align}
or
\begin{align}
r^2\Delta_{\mathbb{R}^{n-k}}v+ Nv +L_{S}  v=r^\frac{n-2}{2} u_S^{\frac{6-n}{n-2}} v^2 \cdot F (u_T^{-1}v), 
\end{align}
where
\begin{align}
Nv &:=  r^2 v_{rr}+(k-1)rv_r,\nonumber\\
L_S v &:= \Delta_{S} v - \frac{n(n+2)}{4}  u_S^{\frac{4}{n-2}} v,
\end{align}
and $F$ is an analytic function, well defined if $|u_T^{-1}v|<1$.

The next theorem is essentially by  \cite{HanShen1}.
\begin{theorem}\label{thm-CSTBD}
Denote $T_M$ as \eqref{eq-TM}, and assume that $T$ is Lipschitz. Let $u_1, u_2$ be two positive $C^2$ solutions of
\begin{align}\label{eq-LN}
 \Delta u  &= \frac14n(n-2) u^{\frac{n+2}{n-2}},
\end{align}
in $T_M$ for some $M>0$, and $u_1=u_2=+\infty$ on $\partial T \cap \{r<M\}$. Then
\begin{align*} 
|u_1-u_2|\leq C,
\end{align*}
in $T_\frac{M}{2}$, where the constant $C=(\frac{4}{M})^\frac{n-2}{2}$.
\end{theorem}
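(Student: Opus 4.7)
I would argue by a one-sided barrier comparison against the explicit Loewner--Nirenberg solution on a Euclidean ball. Fix $x_0\in T_{M/2}$, set $B:=B(x_0,M/2)$ and $D:=B\cap T$. A triangle-inequality check in the product coordinates $(x'_{n-k},r,\theta)$ shows that if $|x'(x_0)|<M/2$ and $r(x_0)<M/2$ then $D\subset T_M$, so both $u_1$ and $u_2$ are defined on $D$. Let $\psi$ be the radial solution of \eqref{eq-LN} on $B$,
\begin{equation*}
\psi(x)=2^{(n-2)/2}\left(\tfrac{M}{2}\right)^{(n-2)/2}\left(\left(\tfrac{M}{2}\right)^{2}-|x-x_0|^{2}\right)^{-(n-2)/2},
\end{equation*}
which blows up on $\partial B$ and satisfies $\psi(x_0)=(4/M)^{(n-2)/2}$. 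Using the algebraic inequality $(a+b)^{p}\geq a^{p}+b^{p}$ for $p:=(n+2)/(n-2)>1$ and $a,b\geq 0$, one checks that $V:=u_2+\psi$ is a supersolution of \eqref{eq-LN} on $D$; interchanging the roles of $u_1$ and $u_2$ at the end will yield the two-sided bound.

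With $W:=u_1-V$, the interior maximum principle closes the picture: if $W$ attains a positive supremum at some $y_0\in D$, then $\Delta W(y_0)\leq 0$ forces $u_1^{p}(y_0)\leq u_2^{p}(y_0)+\psi^{p}(y_0)$, while $u_1(y_0)>u_2(y_0)+\psi(y_0)$ together with the same superadditivity yields $u_1^{p}(y_0)>(u_2+\psi)^{p}(y_0)\geq u_2^{p}(y_0)+\psi^{p}(y_0)$, a contradiction. On the spherical piece $\partial B\cap T$ of $\partial D$, $\psi=+\infty$ and $u_1$ is finite because $D\subset T_M$, so $W=-\infty$ there. Evaluating $W\leq 0$ at $x_0$ then yields $u_1(x_0)-u_2(x_0)\leq\psi(x_0)=(4/M)^{(n-2)/2}$ and, by symmetry, the stated bound.

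\textbf{Main obstacle.} The step that requires real care is controlling $W$ on the lateral portion $B\cap\partial T$ of $\partial D$, where both $u_1$ and $u_2$ are infinite while $\psi$ is finite. Away from the singular stratum $\mathbb{R}^{n-k}\times\{0\}$ of $\partial T$, the local polyhomogeneity of \cite{Loewner&Nirenberg1974} and \cite{ACF1982CMP} forces the boundary-determined coefficients to agree through order $d^{n-1}$, giving $u_1-u_2=O(d^{(n+2)/2})\to 0$, which is dominated by $\psi>0$ in the limit. At the singular axis the only a priori information is the Han--Shen estimate $|u_i/u_T-1|\leq C|x|$, which bounds $u_1-u_2$ only in terms of the blowing-up quantity $u_T$. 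To absorb this I would exhaust $D$ by $D_\varepsilon:=D\cap\{r>\varepsilon\}$ and replace $V$ by the perturbed barrier $V_\varepsilon:=(1+\delta(\varepsilon))u_2+\psi$, which is still a supersolution since $(1+\delta)\leq(1+\delta)^{p}$ for $\delta>0$, $p>1$; choosing $\delta(\varepsilon)=C\varepsilon$ with $C$ from Han--Shen ensures $u_1\leq V_\varepsilon$ on the artificial boundary $\{r=\varepsilon\}$, and letting $\varepsilon\to 0$ recovers $u_1\leq V$ on all of $D$. This exhaustion/perturbation is precisely where the ``essentially Han--Shen'' content of the statement resides.
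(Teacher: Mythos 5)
Your plan goes through a route that is genuinely different from the paper's, and it has a real gap. You correctly identify the hard part — the lateral boundary $B\cap\partial T\cap\{r>\varepsilon\}$, where both $u_1$ and $u_2$ blow up while $\psi$ stays finite — but the way you propose to close it does not match the hypotheses of the theorem. To conclude $\limsup\big(u_1-(1+\delta)u_2-\psi\big)\le 0$ on that portion you invoke the polyhomogeneous expansion of Loewner--Nirenberg/ACF, which requires the lateral boundary to be at least $C^{1,1}$ (indeed $C^{n,\alpha}$-type regularity to match coefficients through order $d^{n-1}$). But Theorem \ref{thm-CSTBD} only assumes $S$ is \emph{Lipschitz}, so $\partial T$ away from the axis is merely a Lipschitz hypersurface; the estimate $d^{(n-2)/2}u\to1$ and a fortiori $u_1-u_2\to 0$ are not available, and even the weaker conclusion $u_1/u_2\to 1$ (which your $(1+\delta)$ perturbation would need) can fail at Lipschitz boundary points. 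Thus your barrier argument does not close on the lateral boundary under the stated hypotheses.

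The paper circumvents this entirely by exploiting the star-shaped hypothesis directly, via a translation rather than an exhaustion. Since $S$ is star-shaped with center $e$, the translated cone $T+\varepsilon e$ lies strictly inside $T$, so $\partial(T+\varepsilon e)\cap B(O,M)$ is in the interior of $T$, where $u_1$ is finite and $C^2$, while $u_{2,\varepsilon}(x):=u_2(x-\varepsilon e)$ blows up there. Adding the explicit ball solution $u_M$ (which handles the spherical boundary) gives a supersolution $u_M+u_{2,\varepsilon}$ that dominates $u_1$ on all of $\partial\big((T+\varepsilon e)\cap B(O,M)\big)$ with no boundary regularity needed; the comparison then follows from the maximum principle and $\varepsilon\to0^+$. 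This shift trick is what makes the Lipschitz hypothesis harmless, and it is the step your proposal is missing. Your superadditivity computation $v^p+w^p\le(v+w)^p$ and the choice of the radial ball barrier are correct and in the same spirit as the paper's use of $u_M$, but the mechanism by which the comparison is completed on $\partial T$ is the translation of the cone, not a polyhomogeneity estimate, and that is where the star-shapedness is actually used.
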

\begin{proof}
Denote $u_M$ the solution to the Loewner-Nirenberg problem in the Euclidean ball $B(O, M)\subseteq \mathbb{R}^n$ with radius $M$. It's well known that
\begin{align*}
u_M= \left(\frac{2M}{M^2-|x|^2}\right)^\frac{n-2}{2}
\end{align*}
is the explicit solution to \eqref{eq-LN-MainEq} and \eqref{eq-LN-MainCond} in $\Omega=B(O, M).$

Since $T$ is Lipschitz, there is an $e \in T$, such that for any $\epsilon>0$, the translation $T+\epsilon e \subset T$.  For any $0<\epsilon<R$,
\begin{align*}
u_{2, \epsilon}(x) := u_2(x-\epsilon e),
\end{align*}
which is a solution to \eqref{eq-LN} in $(T +\epsilon e) \bigcap B(O, M)$ such that $u_{2, \epsilon}=+\infty$ on $\partial (T +\epsilon e) \cap B_R(0)$. Now that $u_M+ u_{2,\epsilon}$ is a supersolution. So by maximum principle, we have
\begin{align*}
u_1< u_M+ u_{2,\epsilon}
\end{align*}
in $(T +\epsilon e) \bigcap B(O, M)$. Taking $\epsilon \rightarrow 0^+$, we derive $u_1<u_M+ u_2$, and conclude the theorem.
\end{proof}

\bigskip
On the upper half plane, we have explicit solution $x_n^{-\frac{n-2}{2}}$ to \eqref{eq-LN}. Hence if  $S$ lies in the upper half sphere $\mathbb{S}^{n-1}_+$, by the maximum principle, we can prove that $u_S\geq 1$. In general, $u_S>0$ in $S$ by \cite{Loewner&Nirenberg1974}. Hence there is a $\sigma>0$, which only depends on $n, S$, such that,
\begin{align}\label{eq-sigma}
u_S^\frac{4}{n-2}\geq \sigma,
\end{align}
in $S$. 
Then we have the following theorem,
\begin{theorem}\label{thm-r-beta}
Denote $T_M$ as \eqref{eq-TM}, and assume that $T$ is Lipschitz. Let $\sigma$  be a number satisfying \eqref{eq-sigma}.  Then for any $\beta>0$ satisfying
\begin{align}\label{eq-n-beta}
 \beta(n+\beta-2) -\frac{n(n+2)}{4 }\sigma<0,
\end{align}
there is a constant $C>0$, such that
\begin{align*}
|u-u_T| \leq C r^\beta,
\end{align*}
in $T_\frac{M}{2}$.
\end{theorem}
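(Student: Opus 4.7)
I would prove this by a maximum principle argument applied to $v := u - u_T$, using the explicit barrier $w = A r^\beta$. From Theorem \ref{thm-CSTBD} we already have $|v| \leq C_0$ in $T_{M/2}$, and $v$ satisfies the linearized form
\[
\Delta v - \frac{n(n+2)}{4}\, u_T^{\frac{4}{n-2}} v = \frac{n(n-2)}{4}\, u_T^{\frac{6-n}{n-2}} v^2 F(u_T^{-1} v),
\]
where the analytic function $F$ is nonnegative for $u_T^{-1}v > -1$ (which always holds, since $u = u_T + v > 0$).

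The key computation is that, since $r$ is the radial coordinate of the $T_k$ factor, $\Delta r^\beta = \beta(\beta+k-2) r^{\beta-2}$, and combined with $u_T^{\frac{4}{n-2}} = r^{-2} u_S^{\frac{4}{n-2}}$,
\[
\Delta(Ar^\beta) - \frac{n(n+2)}{4} u_T^{\frac{4}{n-2}} A r^\beta = A r^{\beta - 2}\left[\beta(\beta + k - 2) - \frac{n(n+2)}{4} u_S^{\frac{4}{n-2}}\right].
\]
By \eqref{eq-sigma}, $k \leq n$, and the hypothesis $\beta(\beta + n - 2) < \frac{n(n+2)}{4}\sigma$, the bracket is bounded above by $-\delta$ for some $\delta = \delta(\beta,n,k,\sigma) > 0$. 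Suppose $D := v - A r^\beta$ attained a positive interior maximum at some $P \in T_{M/2}$. Then $\Delta D(P) \leq 0$ combined with $v(P) > 0$ and $F \geq 0$ yields
\[
\frac{n(n+2)}{4} \sigma\, v(P) \leq A\, \beta(\beta+k-2)\, r(P)^\beta,
\]
whence $v(P) < A r(P)^\beta$, contradicting $D(P) > 0$. The same reasoning applied to $-v - Ar^\beta$ gives the lower bound.

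The main obstacle is establishing $D \leq 0$ on $\partial T_{M/2}$ so that the interior argument can be closed. On the outer face $\{r = M/2\}$ this is immediate after choosing $A \geq C_0 (M/2)^{-\beta}$. However on $\partial T \cap T_{M/2}$ (where $u$ and $u_T$ are simultaneously infinite but $v$ stays bounded by $C_0$), on the face $\{|x'_{n-k}| = M/2\}$ where $r$ may be small, and near the tip $\{r = 0\}$, the uniform bound $|v|\leq C_0$ cannot be dominated by $Ar^\beta$ alone. To handle these, I plan to work on the exhausting family $T_{M/2}^{\delta,\eta} := T_{M/2} \cap \{d_S > \delta,\ |x'_{n-k}| < M/2 - \eta,\ r > \eta\}$ with an augmented barrier $w + \psi_{\delta,\eta}$, where $\psi_{\delta,\eta}$ is a nonnegative supersolution of the linearized operator that dominates $|v|$ on the artificial boundary while satisfying $\psi_{\delta,\eta} \to 0$ pointwise in the interior as $(\delta,\eta) \to 0$. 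The strong coercivity of the zeroth-order coefficient $u_T^{\frac{4}{n-2}} \sim (r d_S)^{-2}$ near $\partial T$ is precisely the mechanism that allows such $\psi_{\delta,\eta}$ to be constructed and to vanish in the limit; passing $(\delta,\eta)\to 0$ then yields $|v| \leq A r^\beta$ throughout $T_{M/2}$.
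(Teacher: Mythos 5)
Your interior computation is correct, and the identification of the barrier $Ar^\beta$ and the negativity of $\beta(\beta+k-2)-\frac{n(n+2)}{4}u_S^{\frac{4}{n-2}}$ under \eqref{eq-n-beta} is exactly the right mechanism. However, the boundary step --- which you correctly flag as ``the main obstacle'' --- is left as an assertion rather than a proof, and that is precisely where the real content of the theorem lies. You propose an exhausting family $T_{M/2}^{\delta,\eta}$ with an augmented barrier $\psi_{\delta,\eta}$ that dominates $C_0$ on the artificial boundary and vanishes in the interior as $(\delta,\eta)\to 0$. This is not obviously constructible in the $x'_{n-k}$ direction: the zeroth-order coefficient $u_T^{\frac{4}{n-2}}\sim (r\,d_S)^{-2}$ provides no coercivity transverse to $x'_{n-k}$ when $r$ is of order $M$, so a supersolution equal to $C_0$ on $\{|x'_{n-k}|=M/2-\eta\}$ will not in general decay to zero at a fixed interior point as $\eta\to 0$. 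Moreover, on the vertex set $\{r=0\}$ the limit of $v$ is not known to vanish, so the comparison on $\{r=\eta\}$ cannot be closed simply by the strong-coercivity heuristic; you would need to actually exhibit $\psi_{\delta,\eta}$ and justify the pointwise limit, which is a nontrivial lemma in its own right.

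The paper avoids all of this by a localization that you have not used. Rather than comparing $v$ with a barrier over the whole domain, it fixes a point $P$ on the vertex set $\{r=0\}$ (only needed when $k<n$), sets $r_P(x)=|x-x(P)|$ (Euclidean distance in $\mathbb{R}^n$, so $\Delta r_P^\beta=\beta(n+\beta-2)r_P^{\beta-2}$, which is why \eqref{eq-n-beta} carries the exponent $n$ rather than $k$), and works in the small ball $D=\{0<r_P^\beta<\delta\}$. The artificial boundary $\{r_P^\beta=\delta\}$ is controlled by Theorem \ref{thm-CSTBD}, since $B\delta$ is chosen to equal the bound $(\frac{4}{M})^{\frac{n-2}{2}}$ on $|u-u_T|$; the true boundary $\partial T\cap D$, where both $u$ and $u_T+Br_P^\beta$ are infinite, is handled by the cone-shifting trick from the proof of Theorem \ref{thm-CSTBD} (comparing $u$ on the translated cone $T+\epsilon e$ and letting $\epsilon\to 0^+$). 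Restricting to the slice $x'_{n-k}=x'_{n-k}(P)$, where $r_P=r$, gives $|v|\leq Br^\beta$ there, and sweeping $P$ over $\{r=0\}$ covers the region $\{Br^\beta<\delta\}$; the complementary region $\{Br^\beta\geq\delta\}$ is trivial because $v$ is already bounded. You should also note that the subsolution side is not purely a sign observation: the paper explicitly quantifies that the cubic remainder $u_T^{\frac{6-n}{n-2}}v^2 F(\cdot)$ is dominated by the linear zeroth-order term when $Br^\beta$ and $r$ are small, and verifies that $u_T-Br_P^\beta$ stays positive in $D$. These estimates are needed and your sketch does not supply them.
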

\begin{proof}
First consider the case $k=n$.
For some $B, \beta>0$, $u_T+ B r^\beta$ is a supersolution if 
\begin{align*}
&\Delta (u_T + B r^\beta) -\frac14n(n-2) (u_T + B r^\beta)^{\frac{n+2}{n-2}}<0.
\end{align*}
Using \eqref{eq-polar}, it implies that
\begin{align*}
N(B r^\beta) +L_S(B r^\beta)<r^\frac{n-2}{2} u_S^{\frac{6-n}{n-2}} (B r^\beta)^2 \cdot F (r^\frac{n-2}{2}  u_S^{-1}(B r^\beta)),
\end{align*}
where $F$ is well defined if $|r^\frac{n-2}{2}  u_S^{-1}(B r^\beta)|<1$.
We have
\begin{align*}
&N(B r^\beta) +L_S(B r^\beta) -r^\frac{n-2}{2} u_S^{\frac{6-n}{n-2}} (B r^\beta)^2 \cdot F (r^\frac{n-2}{2}  u_S^{-1}(B r^\beta))\\
&\qquad=B\left( \beta(k+\beta-2) r^\beta- \frac{n(n+2)}{4 }u_S^\frac{4}{n-2} r^\beta   -r^{\frac{n-2}{2}} u_S^{\frac{6-n}{n-2}} B r^{2\beta} \cdot F (r^\frac{n-2}{2}  u_S^{-1}(B r^\beta)) \right)\\
&\qquad<B\left( \beta(k+\beta-2) r^\beta- \frac{n(n+2)}{4 }\sigma r^\beta  \right)\\
&\qquad< 0,
\end{align*}
as $ \beta(k+\beta-2) - \frac{n(n+2)}{4 }\sigma<0$ by the assumption. Here we applied $F (r^\frac{n-2}{2}  u_S^{-1}(B r^\beta))>0.$
So $u_T+ Br^\beta$ is a supersolution. 

To make $u_T- Br^\beta$ a subsolution, first check about the boundary condition.
By Theorem \ref{thm-CSTBD}, $|u-u_T| = |u - r^{-\frac{n-2}{2}}u_S|< C$ in $T_{\frac{M}{2}}$ for any solution $u$. So we can find a $B>0$ such that $u_T -Br^\beta < u$ when $r$ equals a fixed small $r_0<\frac{M}{2}$. In addition, $B$ can be selected such that $Br^\beta< C$ in  $T_{r_0}$ for some different $C$. Here we compare in a smaller $T_{r_0}$ to make sure that $u_T-Br^\beta$ is  positive.

We compute in $T_{r_0}$,
\begin{align*}
&N(-B r^\beta) +L_S(-B r^\beta) -r^\frac{n-2}{2} u_S^{\frac{6-n}{n-2}} (-B r^\beta)^2 \cdot F (r^\frac{n-2}{2}  u_S^{-1}(-B r^\beta))\\
&\qquad=B\left( -\beta(k+\beta-2) r^\beta+ \frac{n(n+2)}{4 }u_S^\frac{4}{n-2} r^\beta   -r^{\frac{n-2}{2}} u_S^{\frac{6-n}{n-2}} B r^{2\beta} \cdot F (r^\frac{n-2}{2}  u_S^{-1}(-B r^\beta))    \right).
\end{align*}
When $r_0$ is sufficiently small,  $\frac{n(n+2)}{4 }u_S^\frac{4}{n-2} r^\beta $ is sufficient to bound the other terms. In fact,
\begin{align*}
\frac{n(n+2)}{4 }u_S^\frac{4}{n-2} r^\beta \geq  \beta(k+\beta-2) r^\beta+ \left(\frac{n(n+2)}{4 } -  \beta(k+\beta-2) \sigma \right) u_S^\frac{4}{n-2} r^\beta,
\end{align*}
and
\begin{align*}
&|r^{\frac{n-2}{2}} u_S^{\frac{6-n}{n-2}} B r^{2\beta} \cdot F (r^\frac{n-2}{2}  u_S^{-1}(-B r^\beta))|\\
&\qquad\leq  C u_S^{\frac{4}{n-2}} r^{\beta}  |r^{\frac{n-2}{2}}  u_S^{-1}B r^\beta \cdot F (r^\frac{n-2}{2}  u_S^{-1}(-B r^\beta))| \\
&\qquad<\left(\frac{n(n+2)}{4 } -  \beta(k+\beta-2) \sigma \right)  u_S^\frac{4}{n-2} r^\beta,
\end{align*}
if $Br^\beta< C,$ and $r< r_0$ is small.  This concludes the case $k=n.$

If $k< n$, fix a $P\in \partial T_\frac{M}{2}$, with $r(P)=0$.  Denote
\begin{align*}
r_P(x)= |x- x(P)|.
\end{align*}
We consider the test function
\begin{align}\label{eq-M-uT}
M_+ = u_T + Br_P^\beta.
\end{align}
in
\begin{align*}
D=\{x\in T_M: 0<r_P^\beta <\delta\},
\end{align*}
for some $\delta$ very small comparing to $n, \beta, M$. Then we set $B$ large, such that $B\delta=(\frac{4}{M})^\frac{n-2}{2}$, which comes from Theorem \ref{thm-CSTBD}.

Notice
\begin{align*}
&r^2 \Delta_{\mathbb{R}^n} (r_P^\beta)- L_S(r_P^\beta)\\&\qquad=r^2\beta(n+\beta-2)r_P^{\beta-2}- \frac{n(n+2)}{4 }u_S^\frac{4}{n-2} r_P^\beta\\
&\qquad \leq \beta(n+\beta-2)r_P^{\beta}- \frac{n(n+2)}{4 }u_S^\frac{4}{n-2} r_P^\beta\\
&\qquad <0.
\end{align*}
 We conclude that \eqref{eq-M-uT} is a supersolution in $D$. 

To complete the maximum principle,
we shift the cone as in Theorem \ref{thm-CSTBD}, and compare the supsolution $u_T+ Br_P^\beta$ with $u$. We derive that $u\leq u_T +B r_P^\beta $  in $D$, implying that $v\leq B r^\beta$ in $ \{x\in D: x^\prime_{n-k}- x^\prime_{n-k}(P)=0 \}$, where $r=r_P$. We can shift $P$ on the set $\{x\in T_\frac{M}{2}: r(x)=0\}$ to derive that $v\leq B r^\beta$ in the domain  $\{x\in T_\frac{M}{2}: B r^\beta(x)<\delta\}$.
If $B r^\beta(x)>\delta$,  $|v|\leq Cr^\beta$ is trivial as we already know that $v$ is uniformly bounded.

For the subsolution $M_- = u_T - Br_P^\beta$, we concern that whether it remains positive in $D$. We check in $D$,
\begin{align*}
 u_T - Br_P^\beta>u_T-B\delta=r^{-\frac{n-2}{2}} u_S-(\frac{4}{M})^\frac{n-2}{2},
\end{align*}
which is positive if $r$ is small. The rest works the same way as $M_+$ .
\end{proof}

%For equation \eqref{eq-Euc}, we apply the maximum principle to show that
%\begin{align}
%\Delta v - \frac{n(n+2)}{4} %u_T^{\frac{4}{n-2}} v= u_T^{\frac{6-n}{n-2}} v^2 \cdot F (u_T^{-1}v), \label{eq-Euc1}
%\end{align}

%We apply the test function $M_+= A(u_T^{-\frac{4}{n-2}}+ r^2) + B|x_{n-k}^\prime- x_{n-k}^\prime(P)|^2$, where $P$ lies on $\mathbb{R}^{n-k}\times \{r=0\}$ and is near $O$, in
%\begin{align*}
%\{(x^\prime_{n-k}, r, \theta) \in \mathbb{R}^{n-k}\times \mathbb{R}_+\times S: 
%|x^\prime_{n-k}|<\delta, r<\delta, u_T^{-\frac{2}{n-2}}<\delta.\}
%\end{align*}

%By interior estimates, we also have the following,
%\begin{lemma}
%1
%\end{lemma}

\bigskip

\section{Boundary expansions with respect to $d_S$}\label{sec-exp-ds}

\subsection{Expansions of $u_S$}\label{sec-ExpUs}

Recall that $d_S$ is the distance function to $\partial S$ in $S$ , adjusted smooth at points away from $\partial S$. We can trivially extend $d_S$ as a function on $T$.

Notice that at any $x\in T_{M}$ where $d_S(x)$ is small, the distance function  to $\partial T_M$ can be expressed as $d(x) = r(x) \sin (d_S(x) )$. Near any $P \in \partial T$, with $r(P)\neq 0$, we can apply the maximum principle and explicit solutions on the interior balls and on the complement of exterior balls  to show that  $|d^{\frac{n-2}{2}} u_T- 1|< Cd_S$, where $C$ is independent of $r$. See \cite{HanShen1}. It implies 
\begin{align*}
(\sin d_S)^\frac{n-2}{2}u_S(\theta)=d^{\frac{n-2}{2}} u_T = 1+O(d_S),
\end{align*}
as $d_S\rightarrow 0.$

Now $u_S$ satisfies the singular Yamabe type equation \eqref{eq-uS} in smooth $S$. According to \cite{ACF1982CMP}, 
$u_S$ is polyhomogeneous. See also \cite{JiangXiao} or \cite{HanJiang}. 
It means that for any $b\geq n$,
\begin{align}\label{eq-Exp-US}
d_S^\frac{n-2}{2}u_S= v_{S, b}+R_{S,b},
\end{align}
where
\begin{align}\label{eq-Sum-US}
v_{S, b}=1+\sum_{i=1}^{n-1}c_{S, i}d_S^i
+\sum_{i=n}^b\sum_{j=0}^{N_i} c_{S, i, j}d_S^i(\log d_S)^j,
\end{align}
with $c_{S, i}, c_{S, i, j}$'s are independent of $d_S$,
and the remainder $R_{S, b}=O(d_S^{b+\alpha})\cap C^{b+\epsilon}(S)$, for any $0<\epsilon<\alpha<1$. 
This verifies that $d_S^\frac{n-2}{2}u_S$ has a boundary expansion of order $d_S^b$.

\bigskip
\subsection{Expansions of eigenfunctions}
Consider the eigenvalue problem
\begin{align}\label{eq-EigenV}
\Delta_{S} \phi - \frac{n(n+2)}{4} u_S^{\frac{4}{n-2}} \phi= -\lambda \phi,
\end{align}
in $H^1_0$. Notice that
\begin{align*}
u_S^{\frac{4}{n-2}} &= \left(d_S^{-\frac{n-2}{2}}(v_{S,b}+ R_{S,b})\right)^{\frac{4}{n-2}}=d_S^{-2} (v_{S,b}+ R_{S,b})^\frac{4}{n-2}
\end{align*}
where  $\left(v_{S,l}+ R_{S,l}\right)^\frac{4}{n-2}=1+O(d_S)$ and is actually $C^{n-1, \alpha}$ up to $\partial S$ by \eqref{eq-Sum-US}. Denote $\tilde{d}_S =d_S \cdot (v_{S,l}+ R_{S,l})^{-\frac{2}{n-2}}$ as a $C^{n-1,\alpha}$ defining function of $S$, then \eqref{eq-EigenV} is transformed to
\begin{align}\label{eq-PhiI}
\Delta_{S} \phi - \frac{n(n+2)}{4} \frac{ \phi}{\tilde{d}^2_S}= -\lambda \phi.
\end{align}
According to Appendix \ref{sec-Spec},  $\Delta_S - \frac{n(n+2)}{4\tilde{d}_S^2}$ has a complete set of $L^2(S)$-orthonormal eigenfunction's $\{\phi_i\}_{i=1}^\infty$ with respect to the norm $\|\cdot \|_{\tilde{d}_S}$.

By \eqref{eq-PhiI}, an eigenfunction $\phi_i$ satisfies
\begin{align}\label{eq-LSSF}
\Delta_{S} \phi_i - \frac{n(n+2)}{4} \frac{ \phi_i}{d_S^2}= -\lambda_i \phi_i + \frac{n(n+2)}{4} \frac{ \phi_i}{d_S}\cdot \frac{((v_{S,b}+ R_{S,b})^\frac{4}{n-2}-1 )}{d_S},
\end{align}
where $\frac{((v_{S,b}+ R_{S,b})^\frac{4}{n-2}-1 )}{d_S}$ is uniformly bounded. 

Starting with $\phi_i\in H^1_0(S)$, the classical elliptic theory shows that $\phi_i\in C_0^\infty(S)$ and by the maximum principle,
\begin{align*}
\phi_i=O(d_S^\frac{n+2}{2}).
\end{align*}
By \cite{JiangXiao} or \cite{HanJiang}, $d_S^{-\frac{n+2}{2}}\phi_i$ has boundary expansion of order $d_S^b$ for any $b\in \mathbb{N}.$ In fact, for any integer $b\geq n$,
\begin{align}\label{eq-PhiExp}
\phi_i= d_S^\frac{n+2}{2}(\phi_{S, i, b}+R_{S,i, b}),
\end{align}
where
\begin{align}\label{eq-VSLK}
\phi_{S, i, b}(z_S, d_S)=1+\sum_{l=1}^{n-1}c_{S,
i, l}(z_S)d_S^l
+\sum_{l=n}^b \sum_{m=0}^{N_l} c_{S, i, l, m}(z_S)d_S^l(\log d_S)^m,
\end{align}
and 
\begin{align}\label{eq-VSLKRem}
R_{S, i, b}(z_S, d_S)=O(d_S^{b+\alpha})\cap C^{b+\alpha},
\end{align}
 for any $0<\alpha<1$.

\bigskip

 \subsection{Expansions of $v$ with respect to $d_S$}\label{sec-VExpDs} Assume $v= u- u_T$, which is defined on $T_M$ and satisfies \eqref{eq-polar}. By Theorem \ref{thm-CSTBD}, $|v| \leq C_0(M)$ in $T_{\frac{M}{2}}$.
 
 \bigskip
First we derive the estimates of derivatives in the $x^\prime_{n-k}$ direction.
\begin{lemma}\label{lem-v-r-ds}
Assume that  $\beta$ is a number satisfying \eqref{eq-n-beta}. Then for any $q\in \mathbb{N}$,  
 there is a constant $C_q$ depending on $T, M, n, q, S$, such that
\begin{align}\label{eq-xprime-q}
 |D_{x^\prime_{n-k}}^q v|\leq C_q r^\beta d_S^\frac{n+2}{2},
\end{align}
in $T_\frac{M}{2}$.
\end{lemma}
\begin{proof}
We prove by induction. 
The proof of Theorem \ref{thm-r-beta} can be applied to show that $|v|\leq Cr^\beta$ in $T_\frac{3M}{4}$ for some $\beta$ satisfying \eqref{eq-n-beta}.

  Denote $v_q=D_{x^\prime_{n-k}}^q v$.  Inductively we prove that for any $q\in \mathbb{N}$, there is a $\gamma_q\in (\frac{M}{2}, \frac{3M}{4})$, such that  \eqref{eq-xprime-q} holds in $T_{\gamma_q}$, and in addition, for any $m\geq 1$, there is a constant $C_{m, q}$, such that
\begin{align}\label{eq-Dm-vl}
|r^m d_S^m D^m_S v_q| \leq C_{m, q} r^\beta d_S^\frac{n+2}{2}
\end{align} in $T_{\gamma_q}$. Here $D_S$ denotes the derivative with respect to $d_S$ or $z_S$.
When $r\geq\frac{M}{4}$, \eqref{eq-polar} is only sigular when $d_S=0$. We apply the tangential derivative estimates in \cite{JiangXiao} or \cite{HanJiang} to get \eqref{eq-xprime-q}, \eqref{eq-Dm-vl}. In the following, we  show  \eqref{eq-xprime-q}, \eqref{eq-Dm-vl} when $r<\frac{M}{4}$.

Case $q=0$:
Fix a $\delta>0$, which is much smaller than $\frac{M}{4}$. Consider points in $T_{\frac{3M}{4}}$, satisfying
\begin{align}\label{eq-P-delta}
|x^\prime_{n-k}|<\frac{3M}{4}-\delta, \,\, r<\frac{M}{4}.
\end{align}
For any such point $P$, we denote $r_0= r(P)$, and do a scaling $t=r/r_0$, under which the region $\{x\in T_\frac{3M}{4} : \frac{1}{2}r_0 < r(x) < \frac{3}{2}r_0 \}$ is transformed to
\begin{align*}
D=\{(x^\prime_{n-k}, t,\theta):|x_{n-k}^\prime|< \frac{3M}{4},\frac{1}{2}<t< \frac{3}{2},  \theta\in S \},
\end{align*}
 and the equation \eqref{eq-polar} is transformed into \begin{align}\begin{split}\label{eq-polarS}
&\quad  r_0^2 t^2  \Delta_{\mathbb{R}^{n-k}} v+ t^2 v_{tt}+(k-1) t v_t + \Delta_S v-   \frac{n(n+2)}{4} u_S^{\frac{4}{n-2}} v\\
&\qquad =(tr_0)^\frac{n-2}{2}  u_S^{\frac{6-n}{n-2}} v^2 \cdot F \left((tr_0)^\frac{n-2}{2} u_S^{-1}v\right),
\end{split}
\end{align}
where $t^\frac{n-2}{2}$ is smooth in $D$, since $t\in (\frac{1}{2}, \frac{3}{2})$.

By \eqref{eq-Exp-US},
\begin{align}
u_S^{\frac{4}{n-2}}&= d_S^{-2} \left(1+\sum_{i=1}^{n-1}c_{S, i}d_S^i
+\sum_{i=n}^b \sum_{j=0}^{N_i} c_{S, i, j}d_S^i(\log d_S)^j+ R_{S, b}\right)^{\frac{4}{n-2}}\label{eq-USP1}\\
u_S^{\frac{6-n}{n-2}}&= d_S^{-2+\frac{n-2}{2}}\left(1+\sum_{i=1}^{n-1}c_{S, i}d_S^i
+\sum_{i=n}^b \sum_{j=0}^{N_i} c_{S, i, j}d_S^i(\log d_S)^j+ R_{S, b}\right)^{\frac{6-n}{n-2}}\label{eq-USP2}\\
u_S^{-1}&= d_S^{\frac{n-2}{2}} \left(1+\sum_{i=1}^{n-1}c_{S, i}d_S^i
+\sum_{i=n}^b \sum_{j=0}^{N_i} c_{S, i, j}d_S^i(\log d_S)^j+ R_{S, b}\right)^{-1}.\label{eq-USP3}
\end{align}
We write 
\begin{align*}
u_S^{\frac{4}{n-2}} v= \frac{v}{d_S^2} +\frac{v}{d_S}\cdot \frac{(v_{S,b}+ R_{S,b})^\frac{4}{n-2}-1 }{d_S},
\end{align*}
where $\frac{(v_{S,b}+ R_{S,b})^\frac{4}{n-2}-1}{d_S}$ is uniformly bounded.

Now $|v|\leq Cr_0^\beta$ in $D$. We want to prove $|v|\leq Cr_0^\beta d_S^\frac{n+2}{2}$. To this end, first we set $\overline{v}=vd_S$, which vanishes when $d_S=0$, and satisfies a linear equation,
\begin{align}\begin{split}\label{eq-polarS1}
& r_0^2 t^2 \Delta_{\mathbb{R}^{n-k}} \overline{v}+ t^2 \overline{v}_{tt}+(k-1) t \overline{v}_t +\Delta_{S} \overline{v}- 2\frac{(\nabla v, \nabla {d_S})}{d_S}  - \frac{n^2+2n-8}{4 }\cdot \frac{ \overline{v}}{d_S^2}\\
&\qquad = - \frac{n(n+2)}{4} \frac{ v}{d_S}\cdot (d_S^2 u_S^{\frac{4}{n-2}}-1)+v \Delta_S d_S 
\\&\qquad\qquad+ (tr_0)^\frac{n-2}{2} d_S u_S^{\frac{6-n}{n-2}} v^2 \cdot F \left((tr_0)^\frac{n-2}{2} u_S^{-1}v\right),
\end{split}
\end{align}
where right hand side is bounded by $C_1 r_0^\beta d_S^{-\frac{1}{2}}$, where the constant $C_1$ only depends on $u_S, n, C, F$. Here the $ d_S^{-\frac{1}{2}}$ factor comes from the term
\begin{align*}
d_S u_S^{\frac{6-n}{n-2}} \sim d_S^\frac{n-4}{2}= d_S^{-\frac{1}{2}}.
\end{align*}
when $n=3$.

As $P\in T_{\frac{3M}{4}-\delta}$ and $r(P)<\frac{M}{4}$,
We can apply test
functions
\begin{align*}
M_{\pm} = \pm Ar_0^\beta \left(d_S^\frac{4}{3}+(t-t_0)^2 + |x^\prime_{n-k}- x^\prime_{n-k}(P)|^2 \right)
\end{align*}
to \eqref{eq-polarS1}
in
\begin{align*}
D_{\delta, P}:=\{(x^\prime_{n-k}, t, \theta): |t-t_0|<\delta, 0<d_S(\theta)<\delta, |x^\prime_{n-k} - x^\prime_{n-k}(P)|<\delta\},
\end{align*}
 to show that $|\overline{v}(x^\prime_{n-k}, r_0 t, \theta)|\leq M_+$, for some constant $A$ depending on $C, C_1, n, \delta, u_S, F$, but not $r_0$. When $x^\prime_{n-k} = x^\prime_{n-k}(P), t=t_0$, it implies
\begin{align*}
|\overline{v}(x^\prime_{n-k}, r_0 t, \theta)|< A r_0^\beta d_S^\frac{4}{3}.
\end{align*} 
 or equivalently
\begin{align}\label{eq-v-d13}
|v(x^\prime_{n-k}, r_0 t, \theta)|\leq A r_0^\beta d_S^\frac{1}{3}.
\end{align}
Notice that $P$ could be any point satisfying \eqref{eq-P-delta}. Thus $|v|\leq A r^\beta d_S^\frac{1}{3}$ for all points in $ T_{\frac{3M}{4}}$ with $|x^\prime_{n-k}|<\frac{3M}{4}-\delta$.

Next for any point $P$ in $ T_{\frac{3M}{4}-2\delta}$ with $r<\frac{M}{4}$, 
 we  can continue to apply the maximum principle to \eqref{eq-polarS} as in \cite{JiangXiao} or \cite{HanJiang} to show that
\begin{align}\label{eq-v-A-r0}
|v(x^\prime_{n-k}, r_0 t, \theta)| \leq Ar_0^\beta d_S^\frac{n+2}{2}
\end{align} in $D_{\delta, P}$,  for some constant $A$ depending on $C, C_1, n, \delta, u_S, F$.
As $A$ is independent of $r_0$, we have \begin{align}\label{eq-v-A-r}
|v(x^\prime_{n-k}, r, \theta)| \leq Ar^\beta d_S^\frac{n+2}{2},
\end{align}
in $T_\frac{3M}{4} \cap \{x\in T_M: |x^\prime_{n-k}|< \frac{3M}{4}-2\delta\}$. Interior estimates to \eqref{eq-polar} applies to show that, for any $m \in \mathbb{N}$,
\begin{align*}
|r^md_S^m D^m_S v| \leq C_{m, 0}r^\beta d_S^\frac{n+2}{2},
\end{align*}
in $T_\frac{3M}{4} \cap \{x\in T_M: |x^\prime_{n-k}|< \frac{3M}{4}-3\delta\}$, for some constants $C_{m, 0}.$ Then we set $\gamma_0= \frac{3M}{4}-3\delta,$ and derive
\eqref{eq-Dm-vl} for case $q=0$.

If $k=n$, there are no $x_{n-k}^\prime$ coordinates, and we are already done. In the following, we assume that $2\leq k\leq n-1.$

Case $l$ for $l\leq q-1$: assume that \eqref{eq-Dm-vl} is right in $T_{\gamma_l}$ for any $l\leq q-1$. 

For case $q$, applying $ D_{x^\prime_{n-k}}^q$ to \eqref{eq-polar}, we derive an equation of form,
\begin{align}\begin{split}\label{eq-del-vq}
&r^2\Delta_{\mathbb{R}^{n-k}}v_q+ Nv_q +L_{S}  v_q\\
&\qquad-  A_{0, q} r^\frac{n-2}{2} u_S^{\frac{6-n}{n-2}}  v v_q\cdot \tilde{F} (u_T^{-1}v,\cdots, u_T^{-1}v_{q-1})  =H,
\end{split}
\end{align}
where $H$ denotes
\begin{align*}
H := \sum_{0\leq l, m\leq q-1}A_{l, m} r^\frac{n-2}{2} u_S^{\frac{6-n}{n-2}}  v_l v_m\cdot \tilde{F} (u_T^{-1}v, \cdots, u_T^{-1}v_{q-1}), 
\end{align*}
where $A_{l, m}\in \mathbb{N}$. $\tilde{F}$ is an analytic function, well defined if all of its arguments are less than $1$. $H$ is uniformly bounded by induction. Applying \eqref{eq-Dm-vl} for case $q-1$, and  $m=1$, 
$|v_q|\leq C_{1, q-1} r^{\beta-1}d_S^\frac{n}{2}$.

Denote $w_q= r v_q,$ which satisfies  $|w_q|\leq C_{1, q-1} r^{\beta}d_S^\frac{n}{2}$ in $T_{\gamma_{q-1}}$, and vanishes when $r=0$ or $d_S=0$.
By \eqref{eq-del-vq}, $w_q$ satisfies
\begin{align}\begin{split}\label{eq-wq}
&r^2\Delta_{\mathbb{R}^{n-k}}w_q+ N_1 w_q +L_{S}  w_q\\
&\qquad-  A_{0, q} r^\frac{n-2}{2} u_S^{\frac{6-n}{n-2}}  v w_q\cdot \tilde{F} (u_T^{-1}v,\cdots, u_T^{-1}v_{q-1})=r H,
\end{split}
\end{align}
where
\begin{align*}
N_1:= r^2 D_{rr}  +({k-3}) rD_r + 3-k.
\end{align*}
By the induction,  for $0\leq l, m\leq q-1,$
\begin{align*}
r \cdot r^\frac{n-2}{2} u_S^{\frac{6-n}{n-2}}  v_l v_m&=r\cdot r^\frac{n-2}{2}\cdot  O(d_S^{\frac{n-6}{2}}) \cdot O(r^\beta d_S^{\frac{n+2}{2}}) \cdot O(r^{\beta} d_S^{\frac{n+2}{2}} )\\
&= O(r^{\frac{n}{2}+2\beta} d_S^{\frac{3n}{2}-1}),
\end{align*}
in $T_\frac{M}{2}$. Then $r H=O(r^{\frac{n}{2}+2\beta} d_S^{\frac{3n}{2}-1}).$

For any fixed point $P\in \mathbb{R}^n$, we define $r_\rho$ to be
\begin{align*}
\sqrt{\rho |x^\prime_{n-k}-x^\prime_{n-k}(P)|^2 + r^2},
\end{align*}
where $\rho>0$ is a small number such that
\begin{align}\label{eq-rho}
 (\beta+1)(n-k)\rho+ \beta(n+\beta-2) -\frac{n(n+2)}{4 }\sigma,
\end{align}
is negative. 
For any fixed $\beta$ satisfying \eqref{eq-n-beta}, we can find such a $\rho$. For later use, we also denote $r_{\rho^2}=\sqrt{\rho^2 |x^\prime_{n-k}-x^\prime_{n-k}(P)|^2 + r^2}.$ Notice that $r\leq  r_{\rho^2}\leq r_\rho$ as $\rho \in (0,1)$.

We apply the test functions
\begin{align*}
M_\pm = \pm B( r_\rho^{\beta+1} ),
\end{align*}
to \eqref{eq-wq}, in 
\begin{align*}
D_{\delta, P, \rho}:=\{x\in T_\frac{M}{2}: r_\rho^{\beta+1} <\delta\},
\end{align*}
where $P$ is any point in $\mathbb{R}^n$ such that $r(P)=0$
and $D_{\delta, P, \rho} \subseteq T_{\gamma_{q-1}}$.

First on $\partial T \cap \overline{T_{\gamma_{q-1}}}$, $w_q=0$. We set $\delta$ very small comparing to $\rho, \frac{M}{2}$ and $\gamma_{q-1}-\frac{M}{2}$.
In addition,  as $rH=O(r^{\frac{n}{2}+2\beta} d_S^{\frac{3n}{2}-1})$ and
\begin{align*}
A_{0, q} r^\frac{n-2}{2} u_S^{\frac{6-n}{n-2}}  v\cdot \tilde{F} (u_T^{-1}v,\cdots, u_T^{-1}v_{q-1})=O(r^{\frac{n-2}{2}+\beta}d_S^{n-2}),
\end{align*} 
we can set $\delta$ small enough such that
\begin{align}\label{eq-rH}
|rH| <\rho_1 r^{\beta+1}, 
\end{align}
and
\begin{align}\label{eq-A0q}
|A_{0, q} r^\frac{n-2}{2} u_S^{\frac{6-n}{n-2}}  v \cdot \tilde{F} (u_T^{-1}v,\cdots, u_T^{-1}v_{q-1})| <\rho_2,
\end{align}
where $\rho_1, \rho_2$ are small numbers such that
\begin{align}\label{eq-rho12}
 \rho_1+\rho_2+(\beta+1)(n-k)\rho+ \beta(n+\beta-2) -\frac{n(n+2)}{4 }\sigma<0.
\end{align}
Then we set $B$ large such that $B\delta>|w_q|$ at points where $ r_\rho^{\beta+1} =\delta$. Then on $\partial D_\delta$, $|w_q|< M_+$. We plug  $M_+$ into the first three terms of  \eqref{eq-wq},
\begin{align*}
& r^2\Delta_{\mathbb{R}^{n-k}}(r_\rho^{\beta+1} )+ N_1 (r_\rho^{\beta+1} )+L_{S} (r_\rho^{\beta+1} )\\
 &\qquad=r^2(\beta+1)(n-k)\rho r_\rho^{\beta-1}+ r^2(\beta^2-1) r_\rho^{\beta-3} r^2_{\rho^2}\\
 &\qquad\qquad+r^2(\beta+1)(k-2)  r_\rho^{\beta-1} + (3-k) r_\rho^{\beta+1}- \frac{n(n+2)}{4 }u_S^\frac{4}{n-2} r_\rho^{\beta+1}\\
 &\qquad\leq [ (\beta+1)(n-k)\rho +\beta(k+\beta-2)] r_\rho^{\beta+1}- \frac{n(n+2)}{4 }u_S^\frac{4}{n-2} r_\rho^{\beta+1},
\end{align*}
as $r\leq r_\rho, r_{\rho^2}\leq r_\rho$. By \eqref{eq-sigma}, \eqref{eq-rH}, \eqref{eq-A0q}, \eqref{eq-rho12}, $M_+$ is a supersolution  and $w_q\leq M_+$ in $D_{\delta, P, \rho}$. $M_-$ is a subsolution in $D_{\delta, P, \rho}$ for the same reasion.

Then $|v_q|\leq Br_\rho^\beta$ in $D_{\delta, P, \rho}$, in which if $x^\prime_{n-k}=x^\prime_{n-k}(P)$, we derive that $|v_q|\leq Br^\beta$. Recall that in $T_{\gamma_{q-1}}$,
$|v_q|\leq C_{1, q-1} r^{\beta-1}d_S^\frac{n}{2}$, which also implies that $|v_q|\leq Br^\beta$  when $r\geq \delta$, for probably a larger $B$.

In sum, we derive that $|v_q|\leq Br^\beta$ in $T_{\frac{3M}{4}- \rho^{-\frac{1}{2}}\delta}$,

Finally, we use the scaling method and the interior estimates of \eqref{eq-polar}, as in case $q=0$, to show that \eqref{eq-xprime-q}, \eqref{eq-Dm-vl} are right for case $q$ in $T_{\frac{3M}{4}-2 \rho^{-\frac{1}{2}}\delta}$. We set $\gamma_q= \frac{3M}{4}-2 \rho^{-\frac{1}{2}}\delta$ to complete the induction.

%Around any $x_0\in T_\delta$, we denote $r_0= r(x_0)$, and do a scaling $t=r/r_0$, under which the region $\{x\in T_\delta:  \frac{1}{2}r_0 < r(x) < \frac{3}{2}r_0 \}$ is transformed to
%$\{(x^\prime_{n-k}, t,\theta):|x_{n-k}^\prime|< \delta,\frac{1}{2}<t< \frac{3}{2},  \theta\in S \}$, and the equation \eqref{eq-polar-vq} is transformed into
%\begin{align}\begin{split}\label{eq-polarS1}
%&\quad  r_0^2 t^2  \Delta_{\mathbb{R}^{n-k}} v_q+ t^2 v_{q, tt}+(n-1) t v_{q,t} + L_S v_q=H
%\end{split}
%\end{align}
%where we point out $t^\frac{n+2}{n-2}$ is smooth in the domain, since $t\in (\frac{1}{2}, \frac{3}{2})$.

\end{proof}

 \bigskip

 We have the following expansion theorem for $v$ with respect to $d_S$. 
\begin{theorem}\label{thm-VExp}
Assume the same assumption as in Theorem \ref{thm-Main}.
Then $d_S^{-\frac{n-2}{2}}( u-u_T)$ has the boundary expansion of order $d_S^b$ for any $b\in \mathbb{N}$, in $T_\frac{M}{2}$.
\end{theorem}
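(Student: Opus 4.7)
The plan is to reduce the problem to the boundary expansion theory for uniformly degenerate elliptic equations with a single singular normal direction, by freezing the $r$ variable via a rescaling. The key point is that although the equation \eqref{eq-polar} for $v = u - u_T$ is degenerate both at the cone tip $r = 0$ and along the smooth part of the boundary $d_S = 0$, once $r$ is rescaled to a bounded range the only remaining singularity is at $d_S = 0$, which is a codimension-one smooth boundary in the rescaled geometry.

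First, I would fix a point $P \in T_{M/2}$ with $r(P) = r_0 > 0$ and perform the rescaling $t = r/r_0$. The equation \eqref{eq-polar} transforms into \eqref{eq-polarS}, and in the rescaled coordinates $(x'_{n-k}, t, \theta)$ on the domain $\{|x'_{n-k}| < M/2,\, t \in (1/2, 3/2),\, \theta \in S\}$, the left-hand operator is a uniformly degenerate elliptic operator of the type \eqref{eq-d2-vdd} analyzed in Section \ref{sec-Pre}, with $d_S$ as the sole singular direction and with the $r_0^2 t^2 \Delta_{\mathbb{R}^{n-k}}$ piece acting as a uniformly elliptic tangential contribution. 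Using the $u_S$ expansion \eqref{eq-Exp-US}, the coefficient $u_S^{4/(n-2)}$ takes the form $d_S^{-2}(1 + O(d_S))$, so after inserting this expansion the linear part is exactly of the kind to which the polyhomogeneity theorem of \cite{HanJiang} (see also Theorem \ref{thrm-LN-Main-Intro}) applies.

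Second, I would handle the nonlinear right-hand side $r^{(n-2)/2} u_S^{(6-n)/(n-2)} v^2 \cdot F(u_T^{-1} v)$ by combining the bound $v = O(r^\beta d_S^{(n+2)/2})$ from Theorem \ref{thm-v-r-ds} with the $d_S$-expansions of $u_S$ and $u_T$. Since $v^2 = O(d_S^{n+2})$ and $u_S^{(6-n)/(n-2)} = O(d_S^{(n-6)/2})$, the right-hand side is of very high order $O(d_S^{(3n-2)/2})$ in $d_S$; once $v$ is known to have a partial expansion, substituting it back shows the right-hand side inherits one of the same order plus a gain. A standard iterative argument, starting from the indicial root $d_S^{(n+2)/2}$ determined by the linearization of \eqref{eq-polarS} at $d_S = 0$, then produces coefficients $c_{l,m}(x'_{n-k}, r, z_S)$ and a remainder $R_b$ satisfying \eqref{eq-ExpOrdI2}, with logarithmic powers $N_l = \lfloor l/n \rfloor$ appearing because of the indicial resonance at $d_S^n$, exactly as in \eqref{eq-Sum-US} for $u_S$ itself.

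Finally, the regularity estimates \eqref{eq-Cij-1} and \eqref{eq-Rb} require tracking how all bounds depend on the rescaling parameter $r_0$. The factor $r^p D_r^p$ appearing in those estimates is precisely what is produced by $D_t^p|_{t=1}$ after rescaling, since $r D_r = t D_t$; the $x'_{n-k}$ and $z_S$ derivatives are controlled by Theorem \ref{thm-v-r-ds} together with the boundary expansion of $u_S$. The main obstacle, as I see it, is ensuring uniformity in $r_0$ down to $r_0 = 0$: we have no \emph{a priori} smoothness of $v$ in $r$ at the cone tip, only the weighted bounds $r^p D_r^p \sim D_t^p|_{t=1}$, and one must verify that at each step of the iteration these weighted estimates survive the application of the Schauder estimate \eqref{eq-v-Cka} on the rescaled equation. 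This works because the $t$-part of the operator is scale-invariant, the $r_0^2 t^2 \Delta_{\mathbb{R}^{n-k}}$ perturbation is small and smoothing as $r_0 \to 0$, and the nonlinear right-hand side carries an extra power of $r_0^{(n+2)/(n-2)}$ which only improves convergence of the iteration.
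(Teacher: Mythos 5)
Your proposal follows essentially the same route as the paper: rescale $t=r/r_0$ around each point to remove the $r$-singularity, reduce to a uniformly degenerate equation with $d_S$ as the single singular normal direction, and invoke the boundary-expansion (ODE-iteration) machinery of \cite{HanJiang}, while tracking uniformity in $r_0$ through the scale-invariant weighted derivatives $rD_r = tD_t$. The paper records one intermediate ingredient explicitly---the weighted bound $|D_t^p D_{x'_{n-k}}^q D_{z_S}^i D_{d_S}^m v|\le C r^\beta d_S^{(n+2)/2-m}$ before writing the rescaled equation as an ODE in $d_S$---but this is precisely the uniformity check you flag at the end.
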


\begin{proof} Set $v= u-u_T$, which satisfies \eqref{eq-polar}. 
In $T_\frac{M}{2}\cap \{r>\frac{M}{4}\}$, as there is only one singular normal direction $d_S$ in the main equation \eqref{eq-polar}, the theorem follows from a stardard arguent for the boundary expansion. A reference is \cite{HanJiang}.

The proof of Lemma \ref{lem-v-r-ds} also implies that \eqref{eq-xprime-q} holds in
in $T_\frac{3M}{4}$.
Around any $P\in T_\frac{3M}{4}$, we denote $r_0= r(P)$, and do a scaling $t=r/r_0$ as in Lemma \ref{lem-v-r-ds}. Then we derive \eqref{eq-polarS}, which has no singularity in the $t$ direction. In the $x^\prime_{n-k}$ direction, we have estimates \eqref{eq-xprime-q}.  By the tangential derivative estimates of \eqref{eq-polarS},  \eqref{eq-del-vq} in $t, z_S$ as in \cite{HanJiang}, we derive that for any $p, q, i, m \in \mathbb{N}$,
\begin{align*}
 | D_t^p D_{x^\prime_{n-k}}^q D_{z_S}^i D_{d_S}^m v|\leq C(T, M, p, q, i, m) r^\beta d_S^{\frac{n+2}{2}-m},
\end{align*}
in $T_{\frac{3M}{4}-\delta}$, where $\delta=\frac{M}{8}$.
Then we can write \eqref{eq-polarS} as an ODE,
\begin{align*}
 v_{d_S d_S} -   \frac{n(n+2)}{4} d_S^{-2} v& =t^\frac{n-2}{2} r_0^\frac{n+2}{n-2} u_S^{\frac{6-n}{n-2}} v^2 \cdot F (t^\frac{n+2}{n-2} r_0^\frac{n-2}{2} u_S^{-1}v)\\
&\qquad -(r_0^2 t^2  \Delta_{\mathbb{R}^{n-k}} v+ t^2 v_{tt}+(n-1) t v_t )\\
&\qquad +  \frac{n(n+2)}{4} \frac{v}{d_S}\cdot \frac{d_S^2 u_S^{\frac{4}{n-2}} -1 }{d_S} +(
 v_{d_S d_S} - \Delta_S v).
\end{align*}

Then the ODE iteration in \cite{HanJiang} applies to prove that $v$ has a boundary expansion, that for any integer $b\geq n$, 
\begin{align}\label{eq-VExp1}
v= d_S^\frac{n+2}{2}(v_{S, b}+R_{S, v, b}),
\end{align}
in $T_\frac{M}{2}$,
where
\begin{align}\label{eq-VVSLK1}
v_{S, b}&=\sum_{i=0}^{n-1}c_{S,
v, i}(x^\prime_{n-k}, r, z_S)d_S^i
+\sum_{i=n}^b \sum_{j=0}^{N_i} c_{S, v, i, j}(x^\prime_{n-k}, r, z_S)d_S^i(\log d_S)^j,
\end{align}
and it holds, for any integers $l, m, p, q \in \mathbb{N}$, and $\alpha \in (0,1)$, 
\begin{align}\begin{split}\label{eq-VVSRem1}
|r^p D_r^p D_{x_{n-b}^\prime}^q   D_{z_S}^l c_{S, v, i}|&\leq C(T, M, l, i, p, q).\\
|r^p D_r^pD_{x_{n-b}^\prime}^q D_{z_S}^l c_{S, v, i, j}|&\leq C(T, M,  l, i, j, p, q).\\
|r^p D_r^pD_{x_{n-b}^\prime}^q D_{z_S}^l D_{d_S}^m R_{T, M , b}|&\leq C(T, M,  l, m, p, q, \alpha) d_S^{b+\alpha-m}.
\end{split}
\end{align}
As in Theorem \ref{eq-Dm-vl}, in the estimates \eqref{eq-VVSRem1}, we can have an extra factor $r^\beta$ on the right hand side. But we do not need it in the following sections.
\end{proof}

\bigskip

\section{Eigenvalue Growth Estimate}\label{sec-EigenG}
In this section, we prove Theorem \ref{thm-Eigen-Growth}.
\begin{proof}
By a standard argument using the Lax-Milgram Theorem as in Appendix \ref{sec-Spec}, we have the first part of Theorem \ref{thm-Eigen-Growth}.

Now denote $A_\lambda(x,y )=\sum_{\lambda_i \leq \lambda} \phi_i(x)\phi_i(y)$, where
$
\phi_i
$'s are eigenvectors of $L-\frac{\kappa}{d^2}$ corresponding to $\lambda_i$.
Simple calculation shows that
\begin{align*}
(\Delta-\frac{\kappa}{d^2})A_\lambda (x, x)&=-2 \sum_{\lambda_i \leq \lambda} \lambda_i \phi_i^2 +\sum_{\lambda_i \leq \lambda}\frac{\kappa}{d^2}\phi_i^2 +2 \sum_{\lambda_i \leq \lambda} g^{ml}\partial_l \phi_i \partial_m \phi_i\\
&\geq -2\lambda A_\lambda(x, x).
\end{align*}
Assume $A_\lambda(x,x) \leq M$ on $\{d=d_0\}$ for $d_0=\min\{d_1,  \sqrt{\frac{\kappa}{4\lambda}}\}$, where $d_1$ is the inner radius of $\partial S$. $A_\lambda(x,x) = 0$ on $\partial S$.
Then the maximum principle implies that
\begin{align}\label{eq-Axx1}
|A_\lambda(x, x)| \leq M,
\end{align}
in $\{0<d<d_0\}$.

\bigskip
 If  $u,v$ satisfy
\begin{align}\label{eq-uv}
\Delta u-\frac{\kappa}{d^2}u=v,
\end{align}
in a ball $B_g(x_0, r) \subseteq S$, we multiply \eqref{eq-uv} by $d(x_0)^2$ and do the scaling
\begin{align*}
\bar{x}= (x - x_0)/d(x_0),
\end{align*}
to transform the equation to
\begin{align*}
a_{ij}( \bar{x}) u_{\bar{i}\bar{j}}+b_i(\bar{x}) u_{\bar{i}}-\frac{\kappa d(x_0)^2}{d(d(x_0)\bar{x})^2} u =   d(x_0)^2 v,
\end{align*}
for some smooth functions $a_{ij}, b_i$.
Notice $\frac{\kappa d(x_0)^2}{d(d(x_0)\bar{x})^2}$ is bounded and smooth in $\bar{x}$ by \eqref{eq-Eigen-Cond}.
Then by the $W^{2,2}$ estimate, for $j\geq 2$,
\begin{align}\label{eq-W22g}
||u||_{W^{2,2}_G(B_G(x_0, 2^{-j}))}\leq C_j\left(||P u||_{L^2_G(B_G(x_0, 2^{-j+1}))}+ ||u||_{L^2_G(B_G(x_0, 2^{-j+1}))}\right),
\end{align}
where $G$ denotes the scaled metric $d(x_0)^{-2} g$, and
\begin{align*}
P:=a_{ij}( \bar{x})\partial_{\bar{i}\bar{j}}+ b_i(\bar{x})\partial_{\bar{i}}-\frac{\kappa d(x_0)^2}{d(d(x_0) \bar{x})^2}.
\end{align*}

We prove by induction that for any $1\leq m\leq p$,
\begin{align}\label{eq-W2l2}
||u||_{W^{2m,2}_G(B_G(x_0, 2^{-2m}))}\leq C_m  \sum_{q=0}^{m} \|P^{q}u\|_{L^{2}_G(B_G(x_0, 2^{-1}))},
\end{align}
where $C_m$ is independent of $u$.
$p=1$ is by \eqref{eq-W22g}. Assume that case $p-1$ is right. We prove case $p$.
\begin{align*}
&\qquad\|u\|_{W^{2p-1,2}_G(B_G(x_0, 2^{-2p+1}))}\\
&\leq C\left(\|P D_{\bar{x}}^{2p-3} u\|_{L^{2}_G(B_G(x_0, 2^{-2p+2}))} +  \| D_{\bar{x}}^{2p-3} u\|_{L^{ 2}_G(B_G(x_0, 2^{-2p+2}))}\right)\\
&\leq C( \| D_{\bar{x}}^{2p-3} P u\|_{L^{2}_G(B_G(x_0,2^{-2p+2}))} +
\tilde{C}\| D_{\bar{x}}^{2p-2}  u\|_{L^{2}_G(B_G(x_0, 2^{-2p+2}))} \\
&\qquad+
 \tilde{C} \| D_{\bar{x}}^{2p-3} u\|_{L^{ 2}_G(B_G(x_0, 2^{-2p+2}))} )\\
 &\leq C_{2p}  \sum_{q=0}^{p} \|P^{q}u\|_{L^{2}_G(B_G(x_0, 2^{-1}))},
\end{align*}
by induction.
Similar estimates hold for $\|u\|_{W^{2p,2}_G(B_G(x_0, 2^{-2p}))}$ and we conclude \eqref{eq-W2l2}.

Fix $p$ as the smallest integer such that $2p>\frac{l}{2}$, where we recall that $l=\dim S$.
By Lemma 17.5.2 in H\"omander \cite{Hormander}, applying a cutoff function, we can show that
\begin{align}\label{eq-xi}
\xi^{p-\frac{l}{4}}|u(x_0)|\leq C\left(\|u\|_{W^{2p, 2}_G(B_G(x_0, 2^{-2p}))}+\xi^{p} \|u\|_{L^{2}_G(B_G(x_0, 2^{-1}))} \right),
\end{align}
for any $\xi \geq 1.$ Here $C$ is independent of $x_0$. Fix $\xi= \max\{\lambda d(x_0), 1\}$
and set $u(x)= A_\lambda(x, y)$. We derive that, by \eqref{eq-xi}, \eqref{eq-W2l2},
\begin{align*}
|A_\lambda(x_0, y)|&\leq C\left(\xi^{-p+\frac{l}{4}} \|u\|_{W^{2p, 2}_G(B_G(x_0, 2^{-2p}))}+\xi^{\frac{l}{4}} \|u\|_{L^{2}_G(B_G(x_0, 2^{-1}))} \right)\\
&\leq C_{2p} \left(  \xi^{-p+\frac{l}{4}} \sum_{q=0}^{p} \|P^{q}A_\lambda(x, y)\|_{L^{2}_G(B_G(x_0, 2^{-1}))} + \xi^{\frac{l}{4}} \|u\|_{L^{2}_G(B_G(x_0, 2^{-1}))} \right),\\
&\leq  \tilde{C}_{2p} \xi^\frac{l}{4} ||A_\lambda(x, y)\|_{L^{2}_G(B_G(x_0, \frac{1}{2}))}\\
&\leq  \tilde{C}_{2p} \xi^\frac{l}{4} d(x_0)^{-\frac{l}{2}}  ||A_\lambda(x, y)\|_{L^{2}_g(S, x)},
\end{align*}
 where $L^2_g(S, x)$ denotes that the $L^2$ norm is calculated with respect to $g$ in variable $x$. Here we applied that $\|P^{q}A_\lambda(x, y)\|_{L^{2}_G(B_G(x_0, 2^{-1}))}\leq C\xi^q \|A_\lambda(x, y)\|_{L^{2}_G(B_G(x_0, 2^{-1}))}$ where $C$ is independent of $\lambda$. By the fact,
\begin{align*}
A_\lambda(x, y) =(A_\lambda(x, z), A_\lambda(y, z))_{L^2_g(S, z)},
\end{align*}
we have
\begin{align*}
\|A_\lambda(x, z)\|^2_{L^2_g(S, z)}=A_\lambda(x, x) \leq   \tilde{C}_{2p} \xi^\frac{l}{4} d(x)^{-\frac{l}{2}}  ||A_\lambda(x, y)\|_{L^{2}_g(S, y)},
\end{align*}
i.e.
\begin{align*}
\|A_\lambda(x, z)\|_{L^2_g(S, z)}\leq  \tilde{C}_{2p} \xi^\frac{l}{4} d(x)^{-\frac{l}{2}},
\end{align*}
which further implies
\begin{align*}
A_\lambda(x, x)\leq  C(p)  \xi^\frac{l}{2} d(x)^{-l}.
\end{align*}

Now for points in $\{d(x)\geq d_0\}$, 
we have that if $\lambda d(x)^2\leq 1$, then
\begin{align}\label{eq-Axx2}
A_\lambda(x, x)\leq    C(l)   d_0^{-l},
\end{align}
 and if $\lambda d(x)^2>1$,
\begin{align*}
A_\lambda(x, x)\leq C(l)\lambda^{\frac{l}{2}}.
\end{align*}
Recall that $d_0 \sim \sqrt{\frac{\kappa}{2}}\lambda^{-\frac{1}{2}}$ as $\lambda$ large, so
 in both cases, we derive
\begin{align*}
A_\lambda(x, x)\leq   C(\kappa, d_1, l) \lambda^{\frac{l}{2}},
\end{align*}
which also implies
\begin{align}\label{eq-PhiUpBd}
 |\phi_i|\leq C(\kappa, d_1, l) \lambda_i^\frac{l}{4}.
\end{align}

For points in $\{0<d(x)\leq d_0\}$, applying \eqref{eq-Axx1} with $M= C(\kappa, d_1, l)\lambda^{\frac{l}{2}}$, we have
\begin{align*}
|A_\lambda(x,x)|\leq {C}(\kappa, d_1, l) \lambda^\frac{l}{2}.
\end{align*}

Finally, the number of eigenvalues with multiplicity counted is
\begin{align*}
N(\lambda) =\int_S A_\lambda(x, x) dx\leq C \lambda^\frac{l}{2},
\end{align*}
which implies
\begin{align}\label{eq-LamdaiEstm}
\lambda_i >C i^\frac{2}{l}. 
\end{align}

Then we finish the proof of Theorem \ref{thm-Eigen-Growth}.
\end{proof}
We remark that by the assumption of Theorem \ref{thm-Main} and \eqref{eq-USP1},  $d$ actually is
\begin{align*}
d(\theta)= u_S^{-\frac{2}{n-2}}= d_S (1+\sum_{i=1}^{n-1}c_{S, i} d_S^i
+\sum_{i=n}^b \sum_{j=0}^{N_i} c_{S, i, j} d_S^i(\log d_S)^j+ R_{S, b})^{-\frac{2}{n-2}},
\end{align*}
which satisfies \eqref{eq-Eigen-Cond}.

\bigskip

 \section{Proof of Theorem \ref{thm-Main}}\label{sec-SmoothS}
In this section, we prove Theorem \ref{thm-Main}. Assume the same assumption as in Theorem \ref{thm-Main}.

According to Lemma \ref{lem-A1}, $L_S$ has a complete set of $L^2(S)$-orthonormal eigenfunctions $\{\phi_i\}_{i=1}^\infty$, with corresponding eigenvalues $\{-\lambda_i\}$, where $0<\lambda_i \leq \lambda_j$ if $i<j$.
Denote
\begin{align}
\overline{m}_i&=\frac{-(k-2)+\sqrt{(k-2)^2+4\lambda_i}}{2}>0, \nonumber\\
\underline{m}_i&=\frac{-(k-2)-\sqrt{(k-2)^2+4\lambda_i}}{2}<0 \nonumber
,\end{align}
as the zeros of
\begin{align*}
m^2 +(k-2)m -\lambda_i=0.
\end{align*}
Then
\begin{align}\label{eq-mupiMmlwi}
\overline{m}_i \sim 2\sqrt{\lambda_i},\quad \underline{m}_i\sim - 2\sqrt{\lambda_i}
\end{align}
as $\lambda_i \rightarrow \infty.$

Denote $I$ as monoid of $\mathbb{R}$, which is generated by $\{2, \frac{n-2}{2}, \overline{m}_1, \overline{m}_2, \cdots\}$. Notice $0\in I$.

We define the index set $J$ as the following: first, $\{\overline{m}_1, \overline{m}_2, \cdots\} \subseteq J$;
second, if $a, b, c\in J$, then
 $ \frac{n-2}{2}+ a+b + l(\frac{n-2}{2}+ c)\in J$ for any $l \in \mathbb{N}$, and if $k\neq n$, we also request $a+2\in J.$ This is derived from the formal computation of \eqref{eq-polar}.

Easy to see that $J$ is a subset of $I$. Align the elements in $I, J$ in the ascending order. For any element $a\in I$, we denote $a^+$ the next element in $I$, and denote $a^-$ the largest number in $I$ that is smaller than $a$. 

In this section, we prove  Theorem \ref{thm-Main} with the index set $I$. Then the expansion exists, and we can apply the formal computation to show that the index set can be reduced to $J$.

We remark that $\overline{m}_1$ is the smallest element in $J$. If  $S$ lies in the upper half sphere $\mathbb{S}^{n-1}_+$, then $u_S\geq 1$ by the maximum principle, which implies that $\lambda_1\geq \frac{n(n+2)}{4}$, i.e., 
\begin{align*}
\overline{m}_1\geq \frac{n(n+2)}{2\left(\sqrt{(n-2)^2 +n(n+2)} +n-2\right)}.
\end{align*}
When $n=3$, it implies that $\overline{m}_1\geq \frac{3}{2}$ and $v=O(r^\frac{3}{2}d_S^\frac{n+2}{2})$.

We have the following lemma,
\begin{lemma}\label{lem-Fw1wk}
Assume that for some $b\in \mathbb{N}$, $w_1, \cdots, w_l$ are functions defined in $S$ that have boundary expansions of order $d_S^b$, and when $d_S=0$, $(w_1,\cdots, w_l)= (a_1, \cdots, a_l)$.
In addition, we assume that $G$ is a function with $l$ variables which is smooth around  $ (a_1, \cdots, a_l) \in \mathbb{R}^l$. Then $G(w_1, \cdots, w_l)$ has a boundary expansion of order $d_S^b$.
\end{lemma}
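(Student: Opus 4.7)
The plan is to expand $G$ in a Taylor series around $(a_1,\ldots,a_l)$ and show that polynomials and tails in the vanishing quantities $v_j := w_j - a_j$ preserve the boundary-expansion structure of Definition \ref{def-exp-ds}. Since $w_j = a_j$ at $d_S = 0$ and $N_0 = \lfloor 0/n \rfloor = 0$, the expansion of each $v_j$ starts at $d_S^1$, so every monomial $v^\alpha$ of multi-degree $|\alpha|$ is $O(d_S^{|\alpha|})$. Picking an integer $N$ with $N+1 > b+\alpha$, Taylor's theorem with integral remainder gives
\begin{equation*}
G(w) = \sum_{|\alpha| \leq N} \frac{D^\alpha G(a)}{\alpha!}\, v^\alpha + \sum_{|\alpha| = N+1} v^\alpha \int_0^1 (1-t)^N \frac{D^\alpha G(a+tv)}{\alpha!}\, dt,
\end{equation*}
which is well defined near $d_S = 0$ since $v$ is small there and $G$ is smooth in a neighborhood of $a$.

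The core step is the multiplicative lemma: if $v_1$ and $v_2$ each have a boundary expansion of order $d_S^b$ and vanish at $d_S = 0$, then so does $v_1 v_2$. Writing $v_i = P_i + R_i$ for the polyhomogeneous part and remainder, split $v_1 v_2 = P_1 P_2 + P_1 R_2 + R_1 P_2 + R_1 R_2$. The product $P_1 P_2$ of finite sums $c_{l,m}\, d_S^l (\log d_S)^m$ reassembles into a sum of the same form; the logarithmic bound $N_l = \lfloor l/n \rfloor$ is preserved because the floor function is superadditive, $\lfloor l_1/n \rfloor + \lfloor l_2/n \rfloor \leq \lfloor (l_1 + l_2)/n \rfloor$. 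Each cross term carries either an extra factor of $d_S$ from $P_i$ or a factor of $d_S^{b+\alpha}$ from $R_j$; by Leibniz' rule the coefficient and remainder estimates \eqref{eq-Cij-1}, \eqref{eq-Rb} multiply cleanly and the cross terms get absorbed into a new remainder satisfying \eqref{eq-Rb}. Inducting on $|\alpha|$, every monomial $v^\alpha$ with $|\alpha| \geq 1$ has a boundary expansion of order $d_S^b$.

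Summing the finitely many terms of the Taylor polynomial and re-truncating powers $d_S^l$ with $l > b$ into the remainder produces the polyhomogeneous part of $G(w)$. For the Taylor tail, each $v^\alpha$ with $|\alpha| = N+1$ is pointwise $O(d_S^{N+1})$, while the coefficient $\int_0^1 (1-t)^N D^\alpha G(a+tv)\, dt$ is smooth in a neighborhood of $a$ and its derivatives in any of the variables $r, x'_{n-k}, z_S, d_S$ are polynomials in derivatives of $v$ that are controlled by \eqref{eq-Cij-1} and \eqref{eq-Rb}. Thus the tail contributes at worst $O(d_S^{N+1-j})$ on $j$-th $d_S$-derivatives; since $N$ was chosen with $N+1 > b + \alpha + j$, this beats the required bound and is folded into $R_b$.

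The main obstacle is the combinatorial bookkeeping needed to track the weighted derivatives $r^p D_r^p$ alongside the tangential and $d_S$-derivatives through repeated products. These operators commute up to bounded multiplicative constants, so the estimates \eqref{eq-Cij-1} and \eqref{eq-Rb} for the factors $w_j$ combine under Leibniz' rule to give uniform constants depending on $b$, the Taylor order $N$, and finitely many derivatives of $G$ on a compact neighborhood of $(a_1,\ldots,a_l)$, yielding the claimed expansion of $G(w_1,\ldots,w_l)$.
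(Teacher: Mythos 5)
The paper does not actually supply a proof here; it says only ``The proof is by formal computation,'' and your Taylor-expansion argument, together with the multiplicative sub-lemma (products of boundary expansions remain boundary expansions, with the log-power count $N_l=\lfloor l/n\rfloor$ closed under products by superadditivity of the floor), is precisely the content that phrase abbreviates, so this is the same approach with the details filled in. One slip in the write-up: near the end you say ``since $N$ was chosen with $N+1 > b+\alpha+j$,'' which cannot hold for all $j$ with a single $N$; but it is also unnecessary, since the target bound $d_S^{b+\alpha-j}$ already loses one power per $d_S$-derivative at the same rate as your $O(d_S^{N+1-j})$ estimate on the Taylor tail, so the condition $N+1>b+\alpha$ you fixed at the outset is sufficient uniformly in $j$ (and, as you note, Fa\`a di Bruno applied to the integral factor only adds further positive powers of $d_S$, since each factor $v_{j_k}$ that gets differentiated contributes at least one power before the derivative costs are subtracted).
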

The proof is by formal computation.

An example is that by \eqref{eq-Exp-US}, for any $b \in \mathbb{N}$,
\begin{align*}
d_S^{2-\frac{n-2}{2}} u_S^{p-2}=   (d_S^\frac{n-2}{2} u_S)^{p-2},
\end{align*} 
 has expansion of order $d_S^b$, since when $d_S=0$, $d_S^\frac{n-2}{2} u_S=1$, and $G(w)= w^{p-2}$ is a smooth function when $w$ is around $1$.

\vskip0.2in

We define a sequence of integers $\{\tilde{N}_i\}$ in following way: For  $i\in I$, we formally compute $\tilde{N}_i$ successively.

(1) Start with $i=0$, and set $\tilde{N}_0=0$; 
 
(2) Take the next larger $i\in I$. If $i\neq \overline{m}_l$ for any $l\geq 1$, $\tilde{N}_i$ equals $j-1$ where $j$ is the smallest integer such that the term $r^i (\log r)^j $ in
\begin{align}\label{eq-Fk-1}
r^\frac{n-2}{2} u_S^{\frac{6-n}{n-2}} v_{i^-}^2 \cdot F (r^{\frac{n-2}{2}} u_S^{-1}v_{i^-}) -r^2\Delta_{\mathbb{R}^{n-k}}v_{i^-}
\end{align}
has zero coefficient. Here $v_{i^-}:= \sum_{l\in I, l\leq i^-}\sum_{j=0}^{\tilde{N}_l} \tilde{c}_{l,j}(x^\prime_{n-k}, \theta) r^l(\log r)^j $ with undetermined smooth functions $\tilde{c}_{l,j}(x^\prime_{n-k}, \theta)$;

(3)  If $i= \overline{m}_l$ for some $l\geq 1$, $\tilde{N}_i$ equals $j$, which is the smallest integer such that the term $r^i (\log r)^j $ in
\eqref{eq-Fk-1}
has zero coefficient. Go to step (2) to compute $\tilde{N}_i$ for a larger $i\in I$.

Here we do not need to know the exact value of $\tilde{c}_{l,j}$'s, but only set them to be unknowns, and do formal computation to get $\tilde{N}_i$.

%Sometimes we say a function has expansion up to order $d_S^k$ or $r^k$, meaning that the function has expansion of order $d_S^{\tilde{k}}$ or $r^{\tilde{k}}$ respectively, for any $\tilde{k}\leq k$.
\bigskip

The following technical lemma plays a key role in the proof of Theorem \ref{thm-Main}.
\begin{lemma} \label{lem-IntF}
Fix an index $A>0$.
Assume that on $T_\frac{M}{2}$, we have a function 
\begin{align}\label{eq-F-w}
F(x_{n-k}^\prime, r, \theta)= r^a (\log r)^j\cdot w(x_{n-k}^\prime, r, \theta),
\end{align}
for some numbers $a \in \mathbb{R}^+, j\in \mathbb{N}$, such that
$
a\leq A,\, j\leq \tilde{N}_a.
$
If $a<A$, we assume that $w$ only depends on $x_{n-k}^\prime, \theta.$
In addition, we assume that $d_S^{-\frac{n+2}{2}} w$ has an expansion of order  $d_S^b$ for any $b\in \mathbb{N}$, and it holds in $T_\frac{M}{2}$, that
for any $p, q, l, m\in \mathbb{N}$,
\begin{align}\label{eq-w-pqlm}
r^p d_S^m \left\lvert  D_r^p D_{x^\prime_{n-k}}^q  D_{z_S}^l D_{d_S}^m (d_S^{-\frac{n+2}{2}} w) \right\rvert&\leq C(T, M, p, q, l, m).
\end{align}
 Denote
\begin{align*}
F_i= \int_S F \cdot \phi_i d\theta.
\end{align*}
 Then the following  terms, with $r_0=\frac{M}{2}$,
\begin{align*}
&H_1= \sum_{i=1}^\infty \frac{r_0^{\underline{m}_i-\overline{m}_i}  r^{\overline{m}_i} \phi_i}{\overline{m}_i-\underline{m}_i}\int_0^{r_0} r^{-1-\underline{m}_i}F_i dr \\
&H_2= \sum_{i=1}^\infty \frac{r^{\underline{m}_i} \phi_i}{\overline{m}_i-\underline{m}_i}\int_0^{r} r^{-1-\underline{m}_i}F_i dr \\
&H_3 =\sum_{i=1}^\infty  \frac{r^{\overline{m}_i} \phi_i}{\overline{m}_i-\underline{m}_i}\int_r^{r_0} r^{-1-\overline{m}_i}F_i dr,
\end{align*}
have expansions of form,
\begin{itemize}

\item
  if $a=A$,
\begin{align}\label{eq-HExp1}
\sum_{l\in I, l<A}  H_{l, 0}(x_{n-k}^\prime,  \theta) r^l  + \sum_{m=0}^{j+1} H_{A, m}(x_{n-k}^\prime,  r, \theta) r^A (\log r)^m,
\end{align}
\item if $a<A$,
\begin{align}\begin{split}\label{eq-HExp2}
&\sum_{l\in I, l<a}  H_{l, 0}(x_{n-k}^\prime, \theta) r^l  + \sum_{m=0}^{j+1} H_{a, m}(x_{n-k}^\prime, \theta) r^a (\log r)^m \\
&\qquad+\sum_{l\in I, a<l<A}  H_{l, 0}( x_{n-k}^\prime, \theta) r^l  + H_{A, 0}(x_{n-k}^\prime,  r, \theta) r^A,
\end{split}
\end{align}
\end{itemize}
where all coefficients $H_{l, m}$'s  satisfy, that for any fixed $x_{n-k}^\prime,  r$,
\begin{align}\label{eq-HlmL2}
||H_{l, m}||_{L^2(S)}&\leq C(T, M,l,m),\quad 
||L_S (H_{l, m}) ||_{L^2(S)}\leq C(T, M,l,m),
\end{align}
where $C$ is independent of $x_{n-k}^\prime,  r$.
Here $H_{a, j+1}$ is not a zero function only when $a=\overline{m}_i$ for some $i$.

In addition, for any $p, q\in \mathbb{N}$, we have in $T_\frac{M}{2}\cap \{r<\frac{M}{4}\}$,
\begin{align}\begin{split}\label{eq-Hlm-mp}
||r^p D^p_r D_{x_{n-k}^\prime}^q  (H_{l, m})||_{L^2(S)}&\leq C( T, M,l, m,p, q),\\
||r^p D^p_r D_{x_{n-k}^\prime}^q L_S (H_{l, m}) ||_{L^2(S)}&\leq C(T, M,l,m,p, q),\end{split}
\end{align}

\end{lemma}
\begin{proof}
First we show the expansion and \eqref{eq-HlmL2}.
Notice that $F_i$'s only depend on $x_{n-k}^\prime, r$. Denote the operator $T=(-L_S)^\frac{1}{2}$ as in Appendix \ref{sec-lem}.
For any integer $N>0$, if $T^N F \in L^2(S)$, we have
\begin{align*}
F_i &= \frac{1}{\lambda_i^\frac{N}{2}} \int_S F\cdot ( T^N \phi_i) d\theta \\
&=\frac{1}{\lambda_i^\frac{N}{2}} \int_S (T^N F)\cdot \phi_i d\theta.
\end{align*}
By the assumption, $d_S^{-\frac{n+2}{2}} w$ has an expansion of order  $d_S^b$ for any $b\in \mathbb{N}$.
First if $n$ is even, we set $N=\frac{n+2}{2}$. Then if $N$ is even, $T^N F= (-L_S)^\frac{N}{2} F \in L^2(S)$, and if $N$ is odd,  $T^{N-1} F= (-L_S)^\frac{N-1}{2} F =O(d_S)$. By lemma \ref{lem-Tw}, $T^N F\in L^2(S)$.
Secondly if $n$ is odd, we set $N= \frac{n+1}{2}$. Then $T^N F \in L^2(S)$ for the same reason. In sum, we have
\begin{align}\label{eq-FiBd}
|F_i| \leq C(w, S) r^a (\log r)^j \cdot \lambda_i^{-\frac{N}{2}}.
\end{align}

Then we look into the three integrals $H_1, H_2$ and $H_3$. 
In fact, $H_1$ is already of form \eqref{eq-HExp1}, \eqref{eq-HExp2}, as
\begin{align}\label{eq-H1-exp}
&H_1= \sum_{\overline{m}_i<A} \left(\frac{r_0^{\underline{m}_i-\overline{m}_i}  \phi_i}{\overline{m}_i-\underline{m}_i}\int_0^{r_0} r^{-1-\underline{m}_i}F_i dr \right) \cdot  r^{\overline{m}_i}+ H_{A, 0}(x_{n-k}^\prime, r, \theta)r^A,
\end{align}
where
\begin{align}\label{eq-H1-HA0}
H_{A, 0}(x_{n-k}^\prime, r, \theta): = \sum_{\overline{m}_i\geq A} \frac{r_0^{\underline{m}_i-\overline{m}_i}  r^{\overline{m}_i-A} \phi_i}{\overline{m}_i-\underline{m}_i}\int_0^{r_0} r^{-1-\underline{m}_i}F_i dr.
\end{align}
It is clear that there are only finite many terms with $\overline{m}_i<A$ in \eqref{eq-H1-exp}, and their estimates are straightforward. We only have to worry about $H_{A, 0}$.
While applying \eqref{eq-FiBd} to estimate \eqref{eq-H1-HA0},  the integration above will produce a factor at scale $\frac{1}{\underline{m}_i}$, when $i$ gets large. By \eqref{eq-mupiMmlwi},  $\frac{1}{\overline{m}_i-\underline{m}_i} \cdot \frac{1}{\underline{m}_i}$ contributes to an additional $\lambda_i^{-1}$ factor
. Thus by \eqref{eq-FiBd}, \eqref{eq-H1-HA0},
\begin{align*}
&\left\lVert L_S\left(\sum_{\overline{m}_i\geq A} \frac{r_0^{\underline{m}_i-\overline{m}_i}  r^{\overline{m}_i-A} \phi_i}{\overline{m}_i-\underline{m}_i}\int_0^{r_0} r^{-1-\underline{m}_i}F_i dr \right)\right\rVert^2_{L^2(S)}\\
&\qquad = \left\lVert\sum_{\overline{m}_i\geq A} \frac{r_0^{\underline{m}_i-\overline{m}_i}  r^{\overline{m}_i-A} \lambda_i\phi_i}{\overline{m}_i-\underline{m}_i}\int_0^{r_0} r^{-1-\underline{m}_i}F_i dr\right\rVert^2_{L^2(S)}\\
&\qquad \leq  \sum_{\overline{m}_i\geq A} \left\lvert  \frac{r_0^{\underline{m}_i-A}  \lambda_i}{\overline{m}_i-\underline{m}_i}\int_0^{r_0} r^{-1-\underline{m}_i}F_i dr \right\rvert^2 \\
&\qquad \leq  \sum_{ \overline{m}_i\geq A} \left( C(w, S, A, M, j) \lambda_i^{-\frac{N}{2}}\right)^2,
\end{align*}
where $\sum_{\overline{m}_i\geq A} \lambda_i^{-N}$ is convergent as
\begin{align*}
-N \cdot \frac{2}{k-1} \leq  -\frac{n+1}{2} \cdot \frac{2}{k-1} <-1,
\end{align*}
and by Theorem \ref{thm-Eigen-Growth},
\begin{align}\label{eq-converge}
\sum_{\overline{m}_i\geq A} \lambda_i^{-N} \leq \sum_{\overline{m}_i\geq A} i^{-N\cdot \frac{2}{k-1}},
\end{align}
is convergent. 
So we derive \eqref{eq-HlmL2} for $H_1$.

The discussion of $H_2$ is similar. We have
\begin{align*}
&L_S\left( \sum_{i=1}^\infty \frac{r^{\underline{m}_i} \phi_i}{\overline{m}_i-\underline{m}_i}\int_0^{r} r^{-1-\underline{m}_i}F_i dr  \right)= \sum_{i=1}^\infty \frac{r^{\underline{m}_i} \lambda_i\phi_i}{\overline{m}_i-\underline{m}_i}\int_0^{r} r^{-1-\underline{m}_i}F_i dr.
\end{align*}
The difference here is that all terms are of order $r^a (-\log r)^m$ for some $0\leq m\leq j$. 
\begin{itemize}

\item
 if $a= A$, we write $H_2$ as $H_{A, j}r^A (\log r)^j$. We estimate $H_{A, j}$, by \eqref{eq-FiBd},
\begin{align*}
&\left\lVert L_S\left( \sum_{i=1}^\infty \frac{r^{\underline{m}_i-A}(\log r)^{-j} \phi_i}{\overline{m}_i-\underline{m}_i}\int_0^{r} r^{-1-\underline{m}_i}F_i dr \right) \right\rVert^2_{L^2(S)}\\
&\qquad = \left\lVert \sum_{i=1}^\infty \frac{r^{\underline{m}_i-A}(\log r)^{-j} \lambda_i\phi_i}{\overline{m}_i-\underline{m}_i}\int_0^{r} r^{-1-\underline{m}_i}F_i dr \right\rVert^2_{L^2(S)}\\
&\qquad \leq   \sum_{i=1}^\infty \left\lvert  C(w, S) \lambda_i^{-\frac{N}{2}}\cdot\frac{r^{\underline{m}_i-A}(\log r)^{-j} \lambda_i}{\overline{m}_i-\underline{m}_i}\int_0^{r} r^{A-1-\underline{m}_i}(\log r)^j dr \right\rvert^2 \\
&\qquad \leq  \sum_{i=1}^\infty  C(w, S, A, M, j)  \lambda_i^{-N},
\end{align*}
which converges as \eqref{eq-converge}.

\item if $a<A$, by the assumption, $w$ only depends on $x_{n-k}^\prime, \theta.$ Then the integration has explicit formula, and produces terms of order  $ r^{a -\underline{m}_i}  (- \log r)^l$, for $0\leq l\leq j$. For their coefficients, we can estimate in a similar way as in case $a=A$.
\end{itemize}

For $H_3$, notice when $\overline{m}_i=a$, $\int r^{-1-\overline{m}_i} \cdot r^a (\log r)^j dr= \frac{1}{j+1}(\log r)^{j+1}$. So we may have a term of order $r^a (\log r)^{j+1}$ in the expansion of $H_3$. 
In $H_3$, for terms with $\overline{m}_i\geq A$, we just apply \eqref{eq-FiBd} to show that

\begin{itemize}
\item if $a=A$,  \begin{align}\label{eq-H3-mi>A}
\sum_{\overline{m}_i\geq A} \frac{r^{\overline{m}_i} \phi_i}{\overline{m}_i-\underline{m}_i}\int_r^{r_0} r^{-1-\overline{m}_i}F_i dr
\end{align}
can be written as,
\begin{align*}
 H_{A, m}(x_{n-k}^\prime, r, \theta) r^A (\log r)^{m},
\end{align*}
where $m=j+1$ if $A=\overline{m}_i$ for some $i$; otherwise $m=j$. 
\item if $a<A$, as $w$ only depends on $x_{n-k}^\prime, \theta$, \eqref{eq-H3-mi>A} can be written as
\begin{align*}
\sum_{m=0}^{j} H_{a, m}(x_{n-k}^\prime, \theta) r^a (\log r)^m + H_{A , 0}(x_{n-k}^\prime, r, \theta) r^A ,
\end{align*}
\end{itemize}
where all coefficients $H_{l,m}$'s satisfy \eqref{eq-HlmL2}. 

For terms with $\overline{m}_i<A$,
\begin{itemize}
\item
if $a=A$,
\begin{align*}
\sum_{\overline{m}_i< A} \frac{r^{\overline{m}_i} \phi_i}{\overline{m}_i-\underline{m}_i}\int_r^{r_0} r^{-1-\overline{m}_i}F_i dr &=\sum_{\overline{m}_i< A} \frac{r^{\overline{m}_i} \phi_i}{\overline{m}_i-\underline{m}_i}\int_0^{r_0} r^{-1-\overline{m}_i}F_i dr\\
&\qquad \qquad - \sum_{\overline{m}_i< A} \frac{r^{\overline{m}_i} \phi_i}{\overline{m}_i-\underline{m}_i}\int_0^{r} r^{-1-\overline{m}_i}F_i dr,
\end{align*}
which can be dealt with in the same way as for $H_1$ and $H_2$. These are only finite many terms, and the estimates of the coeffcients are straightforward.

\item if $a<A$, $w$ only depends on $ z^\prime,\theta$, and we still derive \eqref{eq-HlmL2} by  \eqref{eq-FiBd} and the explicit integral formula of $r^{-1-\overline{m}_i} \cdot r^a (\log r)^j.$
\end{itemize}

\bigskip
Next, we prove \eqref{eq-Hlm-mp}. Applying $D_{x_{n-k}^\prime}^q$ to $H_1, H_2, H_3$, as only $w$ depends on $x_{n-k}^\prime$, we can use the same arguments above and \eqref{eq-w-pqlm}, to derive  \eqref{eq-Hlm-mp} for case $p=0.$

For $rD_r$, as $H_{l, m}$ is independent of $r$ if $l\neq A$, so we only need to consider $rD_r H_{A, m}$. The only trouble is that $rD_r (r^{\overline{m}_i})= \overline{m}_i r^{\overline{m}_i}$, which  produces an extra  factor $\overline{m}_i$.

For \eqref{eq-H1-HA0} in $H_1$, as in $T_\frac{M}{2}\cap \{r<\frac{M}{4}\}$, $r<\frac{1}{2}r_0$, so we compute
\begin{align}\begin{split}\label{eq-rDr-p}
&\left\lVert  (rD_r)^p L_S \left(\sum_{\overline{m}_i\geq A} \frac{r_0^{\underline{m}_i-\overline{m}_i}  r^{\overline{m}_i-A} \phi_i}{\overline{m}_i-\underline{m}_i}\int_0^{r_0} r^{-1-\underline{m}_i}F_i dr \right)\right\rVert^2_{L^2(S)}\\
&\qquad = \left\lVert\sum_{\overline{m}_i\geq A} \frac{ (\overline{m}_i-A)^p\lambda_i   r_0^{\underline{m}_i-\overline{m}_i}  r^{\overline{m}_i-A} \phi_i}{\overline{m}_i-\underline{m}_i}\int_0^{r_0} r^{-1-\underline{m}_i}F_i dr\right\rVert^2_{L^2(S)}\\
&\qquad \leq \sum_{\overline{m}_i\geq A} (\overline{m}_i-A)^{2p} \left(\frac{1}{2}\right)^{\overline{m}_i-A} \left\lvert\frac{ \lambda_i   r_0^{\underline{m}_i-A}  }{\overline{m}_i-\underline{m}_i}\int_0^{r_0} r^{-1-\underline{m}_i}F_i dr\right\rvert^2,
\end{split}
\end{align}
where $\left\lvert (\overline{m}_i-A)^{2p}\left(\frac{1}{2}\right)^{\overline{m}_i-A}\right\rvert\leq C(p)$. The rest is already estimated.

We apply the integration by parts to derive,
\begin{itemize}
\item
 for $H_2$,
\begin{align*}
rD_r  \left( r^{\underline{m}_i}\int_0^{r} r^{-1-\underline{m}_i}F_i dr \right)&=\underline{m}_i r^{\underline{m}_i}\int_0^{r} r^{-1-\underline{m}_i}F_i dr + F_i \\
&= r^{\underline{m}_i}  \left(\underline{m}_i \int_0^{r} r^{-1-\underline{m}_i}F_i dr+ \int_0^{r} D_r \left( r^{-\underline{m}_i}F_i \right)dr\right)\\
&= r^{\underline{m}_i}   \int_0^{r} \left( r^{-1-\underline{m}_i}( rD_r) F_i \right)dr.
\end{align*}
\item for $H_3$ in the case $\overline{m}_i<a=A$, similarly,
\begin{align*}
&rD_r  \left( r^{\overline{m}_i}\int_0^{r} r^{-1-\overline{m}_i}F_i dr \right)= r^{\overline{m}_i}   \int_0^{r} \left( r^{-1-\overline{m}_i}( rD_r) F_i \right)dr,
\end{align*}
\item for $H_3$, in the case $\overline{m}_i\geq A$,
\begin{align*}
&rD_r  \left( r^{\overline{m}_i}\int_r^{r_0} r^{-1-\overline{m}_i}F_i dr \right)\\
 &\qquad= r^{\overline{m}_i}  \int_r^{r_0} \left( r^{-1-\overline{m}_i}( rD_r) F_i \right)dr-\left(\frac{r}{r_0}\right)^{\overline{m}_i}F_i(x^\prime_{n-k}, r_0).
\end{align*}
The last term with $\left(\frac{r}{r_0}\right)^{\overline{m}_i}$ factor can be dealt with as \eqref{eq-rDr-p}.
\end{itemize}

As $rD_r F$  has estimates by \eqref{eq-w-pqlm}, we can work as the  $p=0$ case to derive \eqref{eq-Hlm-mp} for $p=1$. For general $p$, we can keep applying these identities to transfer the $(rD_r)^p$ derivatives to $F$, and apply \eqref{eq-w-pqlm} to derive the lemma.
\end{proof}

If $H_{l,m}$'s in Lemma \ref{lem-IntF} satisfy \eqref{eq-l1l2}, we can prove that they have epansions of order $d_S^b$ for any $b \in \mathbb{N}$.
\begin{lemma}\label{lem-wExp}
Assume that $w$ is a function defined in $T_\frac{M}{2}$, that for any fixed $|x^\prime_{n-k}|< \frac{M}{2}, r\in  (0, \frac{M}{4})$,   and any $l, m\in \mathbb{N}$, 
\begin{align*}
D_{x_{n-k}^\prime}^l D^m_r  w,\, D_{x_{n-k}^\prime}^l D^m_r L_S w
\end{align*}
exist, and have estimates
\begin{align}\begin{split}\label{eq-rm-lsw}
|| r^l D^l_r  D_{x_{n-k}^\prime}^m w||_{L^2(S)}&\leq C(T,M, l,m),\\
||r^l D^l_r  D_{x_{n-k}^\prime}^m  L_S w||_{L^2(S)}&\leq C(T,M, l,m),
\end{split}
\end{align}
where the constants $C(T,M, l,m)$'s are independent of $x^\prime_{n-k}, r$.
 In addition, we assume that $w$ satisfies in $T_\frac{M}{2}\cap \{r<\frac{M}{4}\}$, for some $l_1, l_2\in \mathbb{R}$,
\begin{align*}
 r^2 w_{rr} + l_1 rw_r+ l_2 w+ L_S w= F,
\end{align*}
where for any $b\in \mathbb{N}$, $d_S^{-\tau} F$ has a boundary expansion of order $d_S^b$,  for some $\tau$ satisfying $\tau- \frac{n}{2}\in \mathbb{N}$. Then
$d_S^{-\frac{n+2}{2}} w$ has a boundary expansion of order $O(d_S^b)$ for any $b\in \mathbb{N}$ in $T_\frac{M}{2}\cap \{r<\frac{M}{8}\}$.
\end{lemma}

\begin{proof}
First we want to derive the $L^\infty\left(T_\frac{M}{2}\cap \{r<\frac{M}{4}\}\right)$ estimate of $r^l D^l_r  D_{x_{n-k}^\prime}^m  w$, for any $l,m\in \mathbb{N}$. To this end, we set $N$ to be the smallest integer that is greater than $\frac{n}{2}$, and show  $W^{N, 2}$ estimates of $ r^l  D^l_r  D_{x_{n-k}^\prime}^m  w$.

By \eqref{eq-rm-lsw}, for fixed $x_{n-k}^\prime$ and $r$, $L_S w\in L^2(S)$. Hence there is an $f\in L^2(S)$, such that
\begin{align}\label{eq-w-f}
d_S^2\Delta_S w -\frac{n(n+2)}{4}d_S^2 u_S^{\frac{4}{n-2}}w =d_S^2 f.
\end{align}
Notice that $d_S^2 u_S^{\frac{4}{n-2}}$ is uniformly bounded in $S$.
 Denote the metric $G= d_S^{-2} g_S$. Under the rescaled coordinate $\bar{\theta}= d_S^{-1}(P)\theta$ around a point $P\in T_\frac{M}{2}$, the equation \eqref{eq-w-f} is uniformly elliptic. In the metric ball $B_G(P, \frac{1}{2}) \subseteq S$, we apply the interior estimates with respect to $\bar{\theta}$ to derive
\begin{align}\begin{split}\label{eq-w-w22}
||w||_{W^{2,2}_G(B_G(P, \frac{1}{4}))} &\leq C_1 
\left(||w||_{L^{2}_G(B_G(P, \frac{1}{2}))}+d_S^2 ||f||_{L^{2}_G(B_G(P, \frac{1}{2}))}\right)\\
&\leq 2 C_1 C(T,M, 0,0),
\end{split}
\end{align}
 by \eqref{eq-rm-lsw}, where $W^{2,2}_G$ denotes the $W^{2, 2}$ norm under the coordinate $\bar{\theta}$. Here
 $C_1$ is independent of the choice of $x^\prime_{n-k}, r$. Similarly, for fixed $x_{n-k}^\prime, r,$
 \begin{align}\label{eq-wlm-w22}
\left\|(rD_r)^l D_{x_{n-k}^\prime}^m   w\right\|_{W^{2,2}_G(B_G(P, \frac{1}{4}))} \leq C_1 C(T,M, l, m).
\end{align}
Hence $w$ is a local $W^{2,2}$ solution to \eqref{eq-l1l2}.  Furthermore, $w$ is smooth in $T_\frac{M}{2}$ by interior estimates of  \eqref{eq-l1l2}. 

For each $l,m\geq 0$, denote
\begin{align*}
w_{l,m}= (rD_r)^l D_{x_{n-k}^\prime}^m  w,
\end{align*}
which has uniform $W^{2,2}_G(B_G(P, \frac{1}{4}))$ estimate by \eqref{eq-wlm-w22}. Here  ``uniform''  means that the estimate is independent of the choice of $P\in T_\frac{M}{2}\cap \{r<\frac{M}{4}\}$, but still depends on $l,m$.

Write \eqref{eq-l1l2} as
\begin{align}\begin{split}\label{eq-ds2-l1l2}
 d_S(P)^2 \Delta_S w &=  d_S(P)^2 u_S^{\frac{4}{n-2}} w +d_S(P)^2 F\\
 &\qquad - d_S(P)^2 ( r^2 w_{rr} + l_1 rw_r+ l_2 w),
 \end{split}
\end{align}
which is uniformly elliptic with respect to the $\bar{\theta}$ coordinates in $B_G(P, \frac{1}{2}))$. Notice $rw_r= w_{1,0}$ and $r^2w_{rr}= w_{2,0}-w_{1,0}$.

We take $(rD_r)^l D_{x_{n-k}^\prime}^m  $ of \eqref{eq-ds2-l1l2} to derive an elliptic equation of $w_{l, m}$, for which the right hand side is linear in $w_{p, 	q}$'s for $p,q$ satisfying $0\leq p\leq l+2, 0\leq q \leq m$. Then we can apply \eqref{eq-wlm-w22}, and interior estimates to show that
\begin{align}\begin{split}\label{eq-wlm-W42}
&||w_{l,m}||_{W^{4,2}_G(B_G(P, \frac{1}{8}))}
\\ \qquad&\leq C\left( \sum_{\substack{ 0\leq p\leq l+2\\
0\leq q\leq m}} C(T,M, p,q)+ \left\| (rD_r)^l D_{x_{n-k}^\prime}^m  F\right\|_{W^{2,2}_G(B_G(P, \frac{1}{4}))}\right).
\end{split}
\end{align}

Then take $D^2_{\bar{\theta}} (rD_r)^l D_{x_{n-k}^\prime}^m$ of \eqref{eq-ds2-l1l2} to derive an elliptic equation of $D_{\bar{\theta}\bar{\theta}} w_{l, m}$. Applying \eqref{eq-wlm-W42}, we get $W^{6,2}_G(B_G(P, \frac{1}{16}))$ estimate of $w-{l,m}$.

Iterate this step until we get $W^{N, 2}_G$ estimates of $w_{l,m},$ which are independent of the choice of $P$.
Hence we derive $||w_{l,m}||_{L^\infty(T_\frac{M}{2} \cap \{r<\frac{M}{4}\})}\leq C(T, M, l, m)$.

To show the boundary expansion with respect to $d_S$, we do the rescaling $t=\frac{r}{r(P)}$ as in Lemma \ref{lem-v-r-ds}. Then the derivatives of $w$ with respect to $t, x^\prime_{n-k}$ are bounded and independent of the choice of $P$. We can apply the maximum principle to show $w=O(d_S^\frac{n+2}{2})$ as in Lemma \ref{lem-v-r-ds} , and continue to show that $w$ has an expansion of order $O(d_S^\frac{n+2}{2})$ in $T_\frac{M}{2} \cap \{r<\frac{M}{8}\}$ as in Theorem \ref{thm-VExp}.  
\end{proof}

Now it's ready to prove Theorem \ref{thm-Main}.

\begin{proof}[Proof of Theorem \ref{thm-Main}]
Recall that $v= u-u_T$ satisfies \eqref{eq-polar}, and
by Theorem \ref{thm-VExp}, for any $b \in \mathbb{N}$, $d_S^{-\frac{n+2}{2}}v$ has an expansion of order $d_S^b$ in $T_\frac{M}{2}$. As $n\geq 3$, we see that
 $v\in X=\left\lbrace v\in H^1(S): \int_\Omega \frac{v^2(x)}{d_S^2(x)}dx <\infty \right\rbrace $.

 By Appendix \ref{sec-Spec}, for any fixed $x_{n-k}^\prime, r$ such that $|x_{n-k}^\prime|<\frac{M}{2}, 0<r<\frac{M}{2}$,
\begin{align*}
v= \sum_i^\infty A_i(x_{n-k}^\prime, r)\phi_i(\theta),
\end{align*}
where
\begin{align*}
A_i= \int_{S} v(x_{n-k}^\prime, r, \theta) \phi_i(\theta) d\theta.
\end{align*}
Plug into the main equation \eqref{eq-polar} and derive
\begin{align}\label{eq-ODE}
r^2 A_i^{\prime\prime}+(k-1) rA_i^\prime -\lambda_i A_i=  \tilde{F}_i
\end{align}
where
\begin{align*}
\tilde{F}_i=\int_{S} \left(r^{\frac{n-2}{2}} u_S^{\frac{6-n}{n-2}} v^2 \cdot F (r^{\frac{n-2}{2}} u_S^{-1}v) - r^2\Delta_{\mathbb{R}^{n-k}}v \right)\phi_i d\theta.
\end{align*}
The ODE \eqref{eq-ODE} has homogeneous solutions $r^{\underline{m}_i}, r^{\overline{m}_i}$, and the general solution is
\begin{align}\begin{split}\label{eq-ODEGenSol}
A_i&=C_1 r^{\underline{m}_i} +C_2 r^{\overline{m}_i}
+\frac{r^{\underline{m}_i}}{\overline{m}_i-\underline{m}_i}\int_r^{r_0} r^{-1-\underline{m}_i}\tilde{F}_i dr \\
&\qquad - \frac{r^{\overline{m}_i}}{\overline{m}_i-\underline{m}_i}\int_r^{r_0} r^{-1-\overline{m}_i}\tilde{F}_i dr. 
\end{split}
\end{align}
To solve out $C_1, C_2$, first we take $r=r_0$ for fixed $r_0=\frac{M}{2}$, where
\begin{align*}
A_i(x_{n-k}^\prime, r_0)= C_1 r_0^{\underline{m}_i} +C_2 r_0^{\overline{m}_i}.
\end{align*}
Secondly, since $|v|\leq C$, multiply \eqref{eq-ODEGenSol} by $r^{-\underline{m}_i}$, and let $r\rightarrow 0$, 
\begin{align*}
0=C_1 +\frac{1}{\overline{m}_i-\underline{m}_i}\int_0^{r_0} r^{-1-\underline{m}_i}\tilde{F}_i dr.
\end{align*}
So we have
\begin{align}\begin{split}\label{eq-Ai}
A_i&=   \left(A_i(x_{n-k}^\prime, r_0) r_0^{-\overline{m}_i} - \frac{r_0^{\underline{m}_i-\overline{m}_i}}{\overline{m}_i-\underline{m}_i}\int_0^{r_0} r^{-1-\underline{m}_i}\tilde{F}_i dr  \right) r^{\overline{m}_i}\\
&\qquad
-\frac{r^{\underline{m}_i}}{\overline{m}_i-\underline{m}_i}\int_0^{r} r^{-1-\underline{m}_i}\tilde{F}_i dr 
- \frac{r^{\overline{m}_i}}{\overline{m}_i-\underline{m}_i}\int_r^{r_0} r^{-1-\overline{m}_i}\tilde{F}_i dr,
\end{split} 
\end{align}
and
\begin{align}\begin{split}\label{eq-ODESoln}
v =\sum_{i=1}^\infty A_i \phi_i&= \sum_{i=1}^\infty \frac{v_i(x_{n-k}^\prime, r_0) r^{\overline{m}_i} \phi_i}{r_0^{\overline{m}_i}} - \sum_{i=1}^\infty \frac{r_0^{\underline{m}_i-\overline{m}_i}  r^{\overline{m}_i} \phi_i}{\overline{m}_i-\underline{m}_i}\int_0^{r_0} r^{-1-\underline{m}_i}\tilde{F}_i dr \\
&\qquad
-\sum_{i=1}^\infty \frac{r^{\underline{m}_i} \phi_i}{\overline{m}_i-\underline{m}_i}\int_0^{r} r^{-1-\underline{m}_i}\tilde{F}_i dr 
-\sum_{i=1}^\infty  \frac{r^{\overline{m}_i} \phi_i}{\overline{m}_i-\underline{m}_i}\int_r^{r_0} r^{-1-\overline{m}_i}\tilde{F}_i dr. 
\end{split}
\end{align}
Take an $\epsilon>0$, and less than $i_0:=\min\{\overline{m}_1, \frac{n-2}{2}, 2\}.$  
Applying Lemma \ref{lem-vi-1} with $A=\epsilon$,
the term 
\begin{align}\label{eq-sum-vi-a}
\sum_{i=1}^\infty \frac{v_i(x_{n-k}^\prime, r_0) r^{\overline{m}_i} \phi_i}{r_0^{\overline{m}_i}} 
\end{align}
can be written as the expansion \eqref{eq-HExp-1} with
\eqref{eq-Hlm-mp-1} holds in $T_\frac{M}{2} \cap \{r<\frac{M}{4}\}$. As $\epsilon< \overline{m}_1$, we only have one term $H_{A, 0}(x_{n-k}^\prime,  r, \theta) r^{\epsilon}$  in the expansion.

By Theorem \ref{thm-VExp}, $d_S^{- \frac{n+2}{2}} v$ has an expansion of order $d_S^b$ for any $b\geq n$ in $T_\frac{M}{2}$. So does $d_S^\frac{n-2}{2} u_S$. Then by Lemma \ref{lem-Fw1wk},
\begin{align*}
 \tilde{F}&:=r^{\frac{n-2}{2}} u_S^{\frac{6-n}{n-2}} v^2 \cdot F (r^{\frac{n-2}{2}} u_S^{-1}v) - r^2\Delta_{\mathbb{R}^{n-k}}v\\
 &= r^{\frac{n-2}{2}}  d_S^{\frac{n-2}{2}+n}(d_S^\frac{n-2}{2}u_S)^{p-2}(d_S^{-\frac{n+2}{2}}v)^2 \cdot F(r^\frac{n-2}{2} (d_S^\frac{n-2}{2}u_S)^{-1} d_S^\frac{n-2}{2}v) - r^2\Delta_{\mathbb{R}^{n-k}}v,
\end{align*}
equals $r^{\min\{\frac{n-2}{2},2\}}  d_S^{\frac{n+2}{2}}$ times a function which has an expansion of order $d_S^b$ for any $b\geq n$. 

Applying Lemma \ref{lem-IntF} with $a=A=\epsilon,$   $j=0$ and  $F(x^\prime_{n-k}, r, \theta) = \tilde{F}$,
we have that the last three terms in
\eqref{eq-ODESoln} can be written as  
$H_{A, 0}(x_{n-k}^\prime,  r, \theta) r^{\epsilon}$ with the estimate \eqref{eq-Hlm-mp} holds in $T_\frac{M}{2} \cap \{r<\frac{M}{4}\}$.
In sum,
\begin{align*}
v(x^\prime_{n-k}, r, \theta) = r^{\epsilon}w_0(x^\prime_{n-k}, r, \theta),
\end{align*}
 in $T_\frac{M}{2} \cap \{r<\frac{M}{4}\}$, where $w_0$ satisfies that for any $p, q\in \mathbb{N}$,
\begin{align*}
|| r^p D^p_r  D_{x_{n-k}^\prime}^q  w_0||_{L^2(S)}&\leq C(T, M, p,q),\\
||r^p  D^p_r D_{x_{n-k}^\prime}^q  L_S w_0||_{L^2(S)}&\leq C(T, M, p,q),
\end{align*}
where $C(T, M, p,q)$ is independent of $x^\prime_{n-k}, r$. Then by \eqref{eq-polar}, $w_0$ is a local $W^{2,2}$ solution of
\begin{align*}
 N_0 w_0 + L_Sw_0= r^{\frac{n-2}{2}-\epsilon}u_S^{\frac{6-n}{n-2}} v^2 F(r^\frac{n-2}{2}u_S^{-1} v) -r^{2-\epsilon}\Delta_{\mathbb{R}^{n-k}}v
\end{align*}
where
\begin{align*}
N_{0}=r^2 D_{rr} +(2\epsilon+k-1) r D_r+ \epsilon(k+\epsilon-2).
\end{align*}
By Lemma \ref{lem-wExp}, $d_S^{-\frac{n+2}{2}} w_0$ has boundary expansion of order $d_S^b$ for any      $b\in \mathbb{N}$ in $T_\frac{M}{2} \cap \{r<\frac{M}{8}\}$. In other words, $v$ has an expansion of order $r^0$ with  $O(d_S^\frac{n+2}{2})$ coefficients  in $T_\frac{M}{2} \cap \{r<\frac{M}{8}\}$.  By Lemma \ref{lem-M4-M2}, we can improve the domain $T_\frac{M}{2} \cap \{r<\frac{M}{8}\}$ to $T_\frac{M}{2}$.

Inductively, we assume that $v$ has an expansion of order $r^a$ with $O(d_S^\frac{n+2}{2})$ coefficients for some $a\in I$ in $T_\frac{M}{2}$, i.e., there are functions
$\tilde{c}_{i, j}$'s, $\tilde{R}_a$ defined on $T_\frac{M}{2}$, and an $\epsilon>0$, such that,
\begin{align}\label{eq-Exp-r1}
v= \sum_{i\in J, i\leq a}\sum_{j=0}^{\tilde{N}_i} \tilde{c}_{i,j} r^i(\log r)^j +\tilde{R}_a,
\end{align}
 where $d_S^{-\frac{n+2}{2}} \tilde{ c}_{i,j}$ and $d_S^{-\frac{n+2}{2}}  \cdot r^{-a-\epsilon}  \tilde{R}_a$ have boundary expansions up to order $d_S^b$ for any integer $b\in \mathbb{N}$. In addition, $\tilde{c}_{i, j}$'s are independent of $r$. 
 
 We prove that $v$ has an expansion of order $r^{a^+}$ with $O(d_S^\frac{n+2}{2})$ coefficients in $T_\frac{M}{2}$, where we recall that $a^+$ is smallest element in $I$ that is larger than $a.$

First we adjust $\epsilon$ if necessary, such that 
\begin{align}\label{eq-epsi}
\overline{m}_i>a_+\text{ implies that } \overline{m}_i>a_++\epsilon.
\end{align}

Again by Lemma \ref{lem-vi-1} with $A=a^+ +\epsilon$, the term \eqref{eq-sum-vi-a}
can be written as the expansion \eqref{eq-HExp-1} with
\eqref{eq-Hlm-mp-1} holds in $T_\frac{M}{2} \cap \{r<\frac{M}{4}\}$.

To deal with rest terms in \eqref{eq-ODESoln}, we have the following lemma, 
\begin{lemma}\label{lem-FTild}
If $v$ has an expansion of order $r^a$ with $O(d_S^\frac{n+2}{2})$ coefficients in $T_\frac{M}{2}$, then $\tilde{F}=r^{\frac{n-2}{2}} u_S^{\frac{6-n}{n-2}} v^2 \cdot F (r^{\frac{n-2}{2}}  u_S^{-1}v) - r^2\Delta_{\mathbb{R}^{n-k}}v$ has an expansion of order $r^{a^+}$ with $O(d_S^{\frac{n+2}{2}})$ coefficients  in $T_\frac{M}{2}$.
\end{lemma}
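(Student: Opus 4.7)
My plan is to substitute the given $r^a$-expansion of $v$ into the formula for $\tilde{F}$ and re-sort the resulting terms by powers of $r$ and $\log r$. Write $v = v_a + \tilde{R}_a$, where $v_a = \sum_{i\in J,\, i\le a}\sum_{j=0}^{\tilde{N}_i}\tilde{c}_{i,j}(x'_{n-k},\theta)\, r^i(\log r)^j$ is the polynomial part and $\tilde{R}_a = O(r^{a+\epsilon})$ in the sense of Definition \ref{def-exp-r}. Every factor in $\tilde{F}$ already possesses a $d_S$-expansion: by \eqref{eq-USP2}--\eqref{eq-USP3}, $u_S^{(6-n)/(n-2)} = d_S^{(n-6)/2}\cdot(\text{$d_S$-smooth})$ and $u_S^{-1} = d_S^{(n-2)/2}\cdot(\text{$d_S$-smooth})$, while $\tilde{c}_{i,j}$ and $\tilde{R}_a$ are $d_S^{(n+2)/2}$ times $d_S$-expandable functions by hypothesis. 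The argument of $F$ then equals $r^{(n-2)/2}u_S^{-1}v = r^{(n-2)/2}d_S^n \cdot (\text{smooth})$, which is uniformly small on $T_{M/2}$, so by Lemma \ref{lem-Fw1wk} the composed function $F(r^{(n-2)/2}u_S^{-1}v)$ itself has a full $d_S$-expansion.

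After distributing the Taylor series $F(x) = \sum_k F_k x^k$, the nonlinear part becomes a sum of terms $F_k\cdot r^{(k+1)(n-2)/2}\, u_S^{(6-n)/(n-2)-k}\, v^{k+2}$. The $d_S$-structure of the $k$-th term works out to $d_S^{(3n-2)/2 + kn}$ times a $d_S$-smooth quantity, which for $n\ge 3$ is bounded below by $d_S^{(n+2)/2}$, as required. The $r$-structure is obtained by expanding $v^{k+2} = (v_a + \tilde{R}_a)^{k+2}$ multinomially: the leading polynomial contributions have $r$-exponents of the form $(k+1)(n-2)/2 + \sum_m i_m$ with $i_m \in J \subseteq I$. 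Since $I$ is a monoid containing $(n-2)/2$ and all of the $i_m$, every such exponent lies in $I$. Collecting monomials with $r$-exponent $\le a^+$ yields the new expansion part of $\tilde{F}$. The linear piece $-r^2\Delta_{\mathbb{R}^{n-k}} v$ is handled identically, since it shifts each exponent $i$ to $i+2 \in I$ while leaving the $d_S^{(n+2)/2}$ structure untouched ($\Delta_{\mathbb{R}^{n-k}}$ commutes with multiplication by $\theta$-functions).

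For the remainder, monomials involving $\tilde{R}_a$ carry $r$-power at least $a + \epsilon + \overline{m}_1$, coming from one $\overline{m}_1$ factor in another copy of $v$; since $a + \overline{m}_1 \in I$ strictly exceeds $a$, we have $a + \overline{m}_1 \ge a^+$, so these terms are $O(r^{a^+ + \epsilon})$, and the quadratic-in-remainder contribution $\tilde{R}_a^2 = O(r^{2a + 2\epsilon})$ is even smaller. Uniform $r$- and $x'_{n-k}$-derivative bounds on the remainder follow from the corresponding bounds on $\tilde{c}_{i,j}$ and $\tilde{R}_a$ in Definition \ref{def-exp-r}, combined with the smoothness of the $d_S$-expansion factors. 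The main obstacle I foresee is the careful bookkeeping of $\log r$ exponents: a product $r^{i_1}(\log r)^{j_1}\cdot r^{i_2}(\log r)^{j_2}$ yields $r^{i_1+i_2}(\log r)^{j_1+j_2}$, so log exponents can grow under multiplication and under the infinite $F$-series, but the recursive definition of $\tilde{N}_i$ via \eqref{eq-Fk-1} is designed precisely so that no monomial $r^i(\log r)^j$ with $j > \tilde{N}_i$ arises in exactly this computation. Verifying this compatibility inductively along $I$, while simultaneously checking the $d_S$-expansion structure of each coefficient via Lemma \ref{lem-Fw1wk}, completes the proof.
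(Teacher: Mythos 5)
Your proposal takes essentially the same route as the paper's proof: both are "formal computations" that substitute the inductive expansion of $v$ into $\tilde F$, expand the analytic nonlinearity $F$ in its Taylor series, and track the $r$- and $d_S$-exponents of each resulting monomial, using that $I$ is a monoid and Lemma \ref{lem-Fw1wk} for the $d_S$-direction. The paper's version is considerably terser — it factors the claim into three pieces ($v^2$ gains one step in $r$-order since the leading $r$-power of $v$ is strictly positive; $u_S^{-1}v$ has $O(d_S^n)$ coefficients so $F(r^{(n-2)/2}u_S^{-1}v)$ is $d_S$-expandable; the linear term $-r^2\Delta_{\mathbb{R}^{n-k}}v$ merely shifts by $r^2$) — while yours makes the multinomial expansion and the $(3n-2)/2 + kn$ bookkeeping fully explicit and flags the $\log r$-growth issue, which the paper leaves implicit in the recursive definition of $\tilde N_i$. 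One small inaccuracy: you attribute the extra $r$-gain in the remainder to "$\overline m_1$ from another copy of $v$," but the cleaner (and complete, including $a<\overline m_1$) observation is that the overall prefactor $r^{(n-2)/2}$ already forces every remainder monomial to have $r$-power $\ge a+\epsilon+(n-2)/2 > a$, and hence $\ge a^+$ since the exponents lie in the monoid $I$; this also covers the pure-remainder term $\tilde R_a^2$ when the polynomial part of $v$ is empty.
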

\begin{proof}
The proof is by formal computation.  We claim that $v^2$ has  an expansion of order $r^{a^+}$ with $O(d_S^{n+2})$ coefficients. In fact, if $a=0$, $v^2= r^{2i_0} w_0^{2}$ has an expansion of order $r^{i_0}$ with $O(d_S^\frac{n+2}{2})$ coefficients. Here $i_0= 0^+.$ If $a> 0$, by \eqref{eq-Exp-r1}, and the fact that the leading term of $v$ is $r^{i_0}$ ($r^0$ term has zero coefficient), $v^2$ has an expansion of order $r^{k+i_0}$ with $O(d_S^{n+2})$ coefficients. As $a+i_0\geq k^+,$  $v^2$ has an expansion of order $r^{a^+}$ with $O(d_S^{n+2})$ coefficients.

By \eqref{eq-USP3}, $u_S^{-1} v$ has an expansion with $O(d_S^{n})$ coefficients. Then by \eqref{eq-USP2},
$r^{\frac{n-2}{2}} u_S^{\frac{6-n}{n-2}} v^2 \cdot F (r^{\frac{n-2}{2}}  u_S^{-1}v) )$ has an expansion of order $r^{a^+}$ with $O(d_S^{\frac{3n}{2}-1})$ coefficients.

By the assumption, $r^2\Delta_{\mathbb{R}^{n-k}}v$ has an expansion of order $r^{2+a}$ with $O(d_S^\frac{n+2}{2})$ coefficients, where $2+a\geq a^+$, which concludes the lemma.
\end{proof}
Let the expansion of $\tilde{F}$ be
\begin{align}\label{eq-exp-Ftilde}
\tilde{F}= \sum_{l\in I, l\leq a^+} \sum_{m=0}^{\tilde{N}_l} \tilde{F}_{l, m}(x^\prime_{n-k}, \theta) r^l (\log r)^m +\tilde{R}_{\tilde{F}, a^+}(x^\prime_{n-k}, r, \theta).
\end{align}
Applying Lemma \ref{lem-IntF} with $A= a^++\epsilon$,
to each term $\tilde{F}_{l, m}(\theta) r^l (\log r)^m$, and $\tilde{R}_{\tilde{F}, a^+}(r, \theta)$ in \eqref{eq-exp-Ftilde}, then summing up with the expansion of \eqref{eq-sum-vi-a} of form \eqref{eq-HExp-1},
we verify that in  in $T_\frac{M}{2}\cap \{r<\frac{M}{4}\}$, $v$ has an expansion of form
\begin{align}\label{eq-VExp11}
v= \sum_{l\in I, l\leq a^+} \sum_{m=0}^{\tilde{N}_l} \tilde{c}_{l, m}(x^\prime_{n-k},  \theta) r^l (\log r)^m +\tilde{R}_{a^+}(x^\prime_{n-k},  r, \theta),
\end{align}
where $\tilde{c}_{l, m}$, $r^{-a^+-\epsilon} \tilde{R}_{a^+}$ satisfy the estimates \eqref{eq-rm-lsw}. If the expansion \eqref{eq-HExp1} for $\tilde{R}_{\tilde{F}, a^+}(r, \theta)$ has  a term $H_{A, 1}r^A \log r$ with nonzero $H_{A, 1}$, we have to adjust $\epsilon$ smaller. Actually we already did it in \eqref{eq-epsi}, which guarantees $H_{A,1}=0$.

Plugging \eqref{eq-VExp11} into \eqref{eq-polar}, by assuming $\tilde{R}_{a^+}= r^{a^++\epsilon}w_{a^+}$, we derive the following equations for $\tilde{c}_{l,m}$ and $w_{a^+}$.
\begin{align*}
&l(k+l-2)\tilde{c}_{l,m}+ L_S\tilde{c}_{l,m}\\
&\qquad= \tilde{F}_{l,m}-(m+1)(2l+k-2)\tilde{c}_{l, m+1}- (m+2)(m+1)\tilde{c}_{l, m+2},
\end{align*}
where $\tilde{c}_{l, m+1}=0$, if $m\geq \tilde{N}_l$. And,
\begin{align*}
&r^2(w_{a^{+}})_{rr} +(2a^{+} +2\epsilon+k-1)r(w_{a^{+}})_r\\
&\qquad+(a^{+} +\epsilon)(a^{+} +\epsilon+k-2)w_{a^{+}}+L_S w_{a^{+}}=\tilde{R}_{\tilde{F}, a^{+}}r^{-a^{+} -\epsilon}.
\end{align*}

For every $l$, $\tilde{c}_{l, \tilde{N}_l}$ satisfies
\begin{align*}
l(k+l-2)\tilde{c}_{l,\tilde{N}_l}+ L_S\tilde{c}_{l, \tilde{N}_l}= \tilde{F}_{l,\tilde{N}_l}.
\end{align*}
By \cite{JiangXiao} or \cite{HanJiang},  $\tilde{c}_{l, \tilde{N}_l}$ has an expansion of order $d_S^\frac{n+2}{2}$. Then we inductively prove that $c_{l, m}$ has an expansion of order $d_S^\frac{n+2}{2}$ for any $m$. Theorem \ref{lem-wExp} implies that $w_{a^+}$ has an expansion of order $d_S^\frac{n+2}{2}$  in $T_\frac{M}{2}\cap \{r<\frac{M}{8}\}$. This verifies that $v$ has a boundary expansion of order $r^{a^+}$ with $O(d_S^\frac{n+2}{2})$ coefficients in $T_\frac{M}{2}\cap \{r<\frac{M}{8}\}$. Finally, we apply Lemma \ref{lem-M4-M2} to conclude the theorem.
\end{proof}

\bigskip
\appendix

\section{Spectral theorem for singular elliptic operators}\label{sec-Spec}
Let $S$ be a Lipschitz  $n$ dimensional   Riemannian manifold with codimension one  boundary.  Assume that $d(x)$ is a Lipschitz  defining function of $\partial S$, which means that $d(x)=0$ if and only if $x\in \partial S$, $ |\nabla d|\leq C$ in $S$ for some $C$, and for any $x\in S$,
\begin{align}\label{eq-def-fun}
C^{-1} \leq \frac{d(x)}{dist(x, \partial S)}\leq C.
\end{align}

Consder the eigenvalue problem for
\begin{align*}
L[u] = -\Delta u+\frac{\kappa}{d^2(x)}u =\lambda u,
\end{align*}
where $\Delta$ is the Laplace-Beltrami operator on $S$ and $\kappa>0$ is a constant. Let $X=\left\lbrace u\in H^1(S): \int_S \frac{u^2(x)}{d^2(x)}dvol <\infty \right\rbrace$, with the norm
\begin{align*}
||u||_X := \left\lbrace \int_S \left(|\nabla u|^2 +\frac{u^2(x)}{d^2(x)}\right) dvol \right\rbrace^\frac{1}{2},
\end{align*}
which is also denoted as $||u||_d.$

Claim 1: $X = H^1_0 (S)$.
\begin{proof}
First we show that
$X \subseteq H^1_0 (S)$.
Let ${\eta}_{\epsilon}(r)$ be a cutoff function, which is $0$ when $r \leq \epsilon$ and $1$ when $r\geq 2\epsilon$. In addition, $|\eta^\prime_\epsilon(r)| \leq \frac{2}{\epsilon}$. For any $u\in X, u(x)\eta_\epsilon(d(x)) \in H^1_0(S)$,
\begin{align*}
&\int_S |\nabla (u(x)- u(x) \eta_\epsilon(d(x))|^2 dvol \\
&\qquad \leq  \int_S |1-\eta_\epsilon(d(x))  |^2|\nabla u|^2 dvol
+ \int_S | \nabla   (\eta_\epsilon(d(x)))|^2 u^2(x) dvol\\
&\qquad \leq  \int_S \left( |\nabla u|^2 
+ 4 | \nabla d(x)|^2 \frac{u^2(x)}{d^2(x)}\right)\cdot m\{0\leq d(x) \leq 2\epsilon\}  dvol  
\end{align*}
which tends to $0$ as $\epsilon \rightarrow 0$.

Secondly we show that $H^1_0 (S) \subseteq  X$.
By Hardy's inequality and \eqref{eq-def-fun}, for any
$u \in H^1_0(S)$,
\begin{align*}
\left\|\frac{u}{d}\right\|_{L^2(S)} \leq  C(S, l)\|\nabla u\|_{L^2(S)}.
\end{align*}
Thus $\|u\|_{X} \leq C\|u\|_{H^1_0}$  and the claim is verifed.
\end{proof}

Claim 2: For any $f \in L^2(S),$ there is a unique $u= K[f] \in X$ such that $L(u)=f$ in the integral sense, i.e. for any $\varphi \in X$, 
\begin{align*}
\int_S \left( \nabla u \cdot \nabla \varphi + \frac{\kappa}{d^2(x)} u(x) \varphi (x) \right) dvol = \int_S f(x) \varphi(x) dvol.
\end{align*} 

\begin{proof}
It follows by a standard argument using the Lax-Milgram Theorem. In fact, we define for any $u, v \in X$,
\begin{align*}
B(u, v) = \int_S \left( \nabla u \cdot \nabla v + \frac{\kappa}{d^2(x)} u(x) v (x) \right) dvol.
\end{align*}
Easy to check
\begin{align*}
|B(u, v)| &\leq C(\kappa) ( ||u||_X \cdot ||v||_X ),\\
B(u, u) &\geq c(\kappa) ||u||_X^2, 
\end{align*}
which implies for any $f\in L^2$, there is a unique $u$, such that
\begin{align*}
B(u, \varphi) = \int_S f \varphi dvol,
\end{align*}
for any $\varphi \in X$,
by the Lax-Milgram Theorem.

\end{proof}

\begin{lemma}\label{lem-A1}
 $K$ defined in claim 2 satisfies the following properties,

(a) $||K[f]||_X \leq C||f||_{L^2 (S)}$\\

(b) $(K[f], g)_{L^2(S)} = (f, K[g])_{L^2(S)}$\\

(c) $K: L^2(S) \rightarrow X  \hookrightarrow H^1_0 (S) \hookrightarrow L^2(S)$ is a compact operator.\\

(d)For $u\in X$, $L(u)=\lambda u$ if and only if $\lambda K[u]=u.$
\end{lemma}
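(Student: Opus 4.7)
The plan is to derive all four items directly from the Lax--Milgram setup of Claim~2 together with the inclusion $X\subseteq H^1_0(S)$ of Claim~1. Recall that the bilinear form $B(u,v)=\int_S(\nabla u\cdot\nabla v+\kappa d^{-2} uv)\,dvol$ is symmetric, continuous, and coercive on $X$, and $K[f]$ is characterized by $B(K[f],\varphi)=(f,\varphi)_{L^2}$ for every $\varphi\in X$. For part (a), I would simply test the defining identity against $\varphi=K[f]$ itself: coercivity yields
\begin{equation*}
c\|K[f]\|_X^2\le B(K[f],K[f])=(f,K[f])_{L^2}\le\|f\|_{L^2}\|K[f]\|_{L^2}\le\|f\|_{L^2}\|K[f]\|_X,
\end{equation*}
and cancelling one factor of $\|K[f]\|_X$ gives the stated bound.

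For part (b), symmetry of $B$ gives at once
\begin{equation*}
(f,K[g])_{L^2}=B(K[f],K[g])=B(K[g],K[f])=(g,K[f])_{L^2}.
\end{equation*}
For part (d), the eigenvalue equation $L(u)=\lambda u$ read in the weak sense is exactly $B(u,\varphi)=(\lambda u,\varphi)_{L^2}$ for every $\varphi\in X$; by the uniqueness clause of Claim~2 this is equivalent to $u=K[\lambda u]=\lambda K[u]$, and both directions are simply a rereading of the definition of $K$.

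The only substantive input is in part (c). Boundedness of $K\colon L^2(S)\to X$ is precisely part (a). Claim~1 gives the continuous embedding $X\hookrightarrow H^1_0(S)$, since the inequality $\|u\|_{H^1}\le\|u\|_X$ holds by inspection of the norms. Hence $K$ factors as
\begin{equation*}
L^2(S)\xrightarrow{\ K\ }X\hookrightarrow H^1_0(S)\hookrightarrow L^2(S),
\end{equation*}
and the final inclusion is compact by the Rellich--Kondrachov theorem on the bounded Lipschitz domain $S$. Composing a bounded map with a compact one yields compactness of $K$ as an operator $L^2(S)\to L^2(S)$. I do not anticipate any real obstacle: the whole lemma is the standard Hilbert-space functional-analytic package that comes with any coercive symmetric bilinear form, and the only place where the singular weight $d^{-2}$ matters is in Claim~1, which has already been verified by the cutoff argument before the lemma. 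Consequently the main ``load-bearing'' step is really Claim~1, and once it is in hand, (a)--(d) are each a one-line consequence.
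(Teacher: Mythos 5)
Your proposal is correct and follows essentially the same route as the paper: coercivity of $B$ plus $\|u\|_{L^2}\lesssim\|u\|_X$ gives (a), symmetry of $B$ gives (b), factoring $K$ through $X\hookrightarrow H^1_0(S)\hookrightarrow L^2(S)$ with Rellich--Kondrachov gives (c), and the weak formulation plus uniqueness in Claim 2 gives (d). You spell out a bit more detail than the paper (which marks (d) as ``trivial''), but the argument is the same.
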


\begin{proof}
(a) For any $f$, assume $u=K[f]$. Then
\begin{align*}
c(k)||u||_X^2 \leq B(u, u)= \int_S uLu dvol  =\int_S fu dvol \leq ||u||_{L^2} \cdot ||f||_{L^2},
\end{align*}
which implies (a).

(b) $(K[f], g)_{L^2(S)}= B(K[f], K[g])= (f, K[g])_{L^2(S)}$.

(c) $L^2(S) \rightarrow X \hookrightarrow H^1_0(S)$ is bounded, and $H^1_0(S) \hookrightarrow L^2(S)$ is compact, implying $K$ is compact.

(d) is trivial.
\end{proof}

Based on (a)-(c), $K$ is a self-adjoint compact operator on $L^2(S)$. So it has a complete set of $L^2(S)$-orthonormal eigenfunctions $\{u_j\}_{j=1}^\infty$, which are also eigenfunctions of $L$.

\bigskip
\section{Fundamental lemmas}\label{sec-lem}
In this section, we show some lemmas needed in the proof of Theorem \ref{thm-Main}.

First we introduce the operator $ T$ on $L^2(S)$,  defined as, for $w(\theta) =\sum_i B_i \phi_i(\theta) $,
\begin{align*}
Tw = (-L_S)^\frac{1}{2} w = \sum_i \sqrt{\lambda_i} B_i \phi_i.
\end{align*}
Easy to check $T$ is self-adjoint, and $T\phi_i= \sqrt{\lambda_i}\phi_i$. The following lemma is well known.
\begin{lemma}\label{lem-Tw}
Assume $w\in H^1(S)\cap C^2(S)$, and $w= O(d_S^l)$, $L_S w= O(d_S^{l-2})$  for some $l \geq 1$.  Then $Tw\in L^2(S)$ and
\begin{align*}
(Tw,  Tw)_{L^2(S)}\leq (w, - L_S w)_{L^2(S)}.
\end{align*}
\end{lemma}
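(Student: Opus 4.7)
The plan is to identify $(w, -L_S w)_{L^2(S)}$ with the Dirichlet form
$B(u,v) := \int_S \nabla u\cdot\nabla v + \frac{n(n+2)}{4}\int_S u_S^{\frac{4}{n-2}} uv$
associated to $-L_S$, and then invoke Bessel's inequality for the $B$-orthonormal system $\{\phi_i/\sqrt{\lambda_i}\}$. First I would verify $w \in X$ in the sense of Appendix~\ref{sec-Spec}. Since $u_S^{\frac{4}{n-2}}\sim d_S^{-2}$ near $\partial S$ by \eqref{eq-USP1} and $w = O(d_S^l)$ with $l \geq 1$, we have $u_S^{\frac{4}{n-2}}w^2 = O(d_S^{2l-2})$, which is integrable, and $w \in H^1(S)$ by hypothesis.

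Next I would upgrade the pointwise hypotheses on $w$ and $L_S w$ to the pointwise gradient bound $|\nabla w| = O(d_S^{l-1})$ near $\partial S$. Combining $L_S w = O(d_S^{l-2})$ with $\frac{n(n+2)}{4}u_S^{\frac{4}{n-2}}w = O(d_S^{l-2})$ gives $\Delta_S w = O(d_S^{l-2})$; rescaling by $d_S(\theta_0)$ around any $\theta_0$ close to $\partial S$ turns this into a uniformly elliptic equation on a unit ball, and interior Schauder estimates applied to the rescaled function (whose sup-norm and source both have size $O(d_S(\theta_0)^l)$ after scaling) yield $|\nabla w(\theta_0)| \leq C\,d_S(\theta_0)^{l-1}$. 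With this in hand I would apply Green's identity on the truncated domain $S_\epsilon := \{d_S > \epsilon\}$:
\begin{align*}
\int_{S_\epsilon} w(-L_S w) = \int_{S_\epsilon} |\nabla w|^2 + \frac{n(n+2)}{4}\int_{S_\epsilon} u_S^{\frac{4}{n-2}} w^2 - \int_{\{d_S=\epsilon\}} w\,\partial_\nu w\,d\sigma.
\end{align*}
The boundary flux is bounded by $O(\epsilon^l)\cdot O(\epsilon^{l-1})\cdot |\{d_S=\epsilon\}| = O(\epsilon^{2l-1})$, which vanishes as $\epsilon\to 0$ since $l \geq 1$. The three volume integrands lie in $L^1(S)$ (the left one is $O(d_S^{2l-2})$; the first on the right since $w \in H^1$; the second since $w \in X$), so dominated convergence delivers $(w, -L_S w)_{L^2} = B(w,w)$.

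For the last step, the spectral framework of Appendix~\ref{sec-Spec} provides $\phi_i \in X$ with $B(\phi_i,\varphi) = \lambda_i(\phi_i,\varphi)_{L^2}$ for every $\varphi \in X$; taking $\varphi = w$ yields $B(w,\phi_i) = \lambda_i a_i$, where $a_i := (w,\phi_i)_{L^2}$. Since $B(\phi_i/\sqrt{\lambda_i},\phi_j/\sqrt{\lambda_j}) = \delta_{ij}$, Bessel's inequality in the Hilbert space $(X,B)$ gives
\begin{align*}
\sum_{i=1}^\infty \lambda_i a_i^2 = \sum_{i=1}^\infty \bigl|B\bigl(w,\phi_i/\sqrt{\lambda_i}\bigr)\bigr|^2 \leq B(w,w),
\end{align*}
so $Tw = \sum_i \sqrt{\lambda_i}\,a_i\phi_i$ converges in $L^2(S)$ and $(Tw,Tw)_{L^2} \leq (w,-L_S w)_{L^2}$, as asserted. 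The main obstacle is the boundary-flux bookkeeping: one must first derive the pointwise gradient estimate $|\nabla w| = O(d_S^{l-1})$ via the scaled interior Schauder argument before the $O(\epsilon^{2l-1})$ decay of the surface term becomes available, and the dominated-convergence step requires some care given only the Lipschitz regularity of $\partial S$; once that identification is made, Bessel's inequality does the rest.
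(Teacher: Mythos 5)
Your proof is correct, and the final step takes a different route from the paper's. The paper expands $w=\sum_i B_i\phi_i$ in the $L^2(S)$-orthonormal eigenbasis, writes $(Tw,Tw)_{L^2}=\sum_i\lambda_i B_i^2=\sum_i (-B_iL_S\phi_i,w)_{L^2}$, and then transfers $L_S$ from $\phi_i$ to $w$ term by term via Stokes' theorem using the decay $\phi_i=O(d_S^{(n+2)/2})$, $w=O(d_S)$, arriving at the \emph{equality} $(Tw,Tw)_{L^2}=(w,-L_Sw)_{L^2}$; when $l=1$ the term $L_Sw$ is not in $L^2$, so the paper reinterprets the pairing as a bounded functional on $X$, and the termwise passage to the limit then tacitly uses that $\sum_i B_i\phi_i\to w$ in $X$, i.e.\ completeness of $\{\phi_i\}$ in the energy space. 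You instead establish $(w,-L_Sw)_{L^2}=B(w,w)$ directly by a single cut-off Green's identity on $S_\epsilon$, controlling the boundary flux with the gradient bound $|\nabla w|=O(d_S^{l-1})$, and then invoke Bessel's inequality for the $B$-orthonormal family $\{\phi_i/\sqrt{\lambda_i}\}$. Bessel needs only orthonormality, not $X$-completeness, and it yields the stated inequality directly, so your version is somewhat more self-contained; the trade-off is that you only get $\leq$ rather than the equality, but that is all the lemma asserts, and in fact the two arguments share the same underlying integration-by-parts and decay mechanism. One minor correction: the gradient bound $|\nabla w|=O(d_S^{l-1})$ follows from the interior gradient estimate for the Poisson equation (rescaling $\Delta_S w=O(d_S^{l-2})$, $w=O(d_S^l)$ to unit scale), which requires only $L^\infty$ control of the source; ``Schauder estimates'' would ask for H\"older control of $\Delta_Sw$, which the hypotheses do not supply and which you do not need.
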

\begin{proof}
Assum $w= \sum_i B_i \phi_i(\theta)$. Then
\begin{align*}
(Tw, Tw)_{L^2(S)} &= \sum_i (\sqrt{\lambda_i}B_i)^2 \\
 &= \sum_i ( -B_i L_S \phi_i,  w)_{L^2(S)}\\
&= \sum_i ( B_i \phi_i, - L_S  w)_{L^2(S)}\\
&= ( w, - L_S  w)_{L^2(S)}.
\end{align*}
Here we switch $L_S$ by applying Stoke's Theorem and the fact that $\phi_i= O(d_S^\frac{n+2}{2}), w=O(d_S)$. $L_S w=O(d_S^{-1})\notin L^2(S)$ if $l=1$, but we can regrad $(\cdot, L_S w)_{L^2(S)}$ as a bounded linear operaton on $X$. So $ \sum_i ( B_i \phi_i, - L_S  w)_{L^2(S)}$ converges to $( w, - L_S  w)_{L^2(S)}$.
\end{proof}

Next we show,
\begin{lemma}\label{lem-vi-1}
Fix an index $A>0$.
Assume that in $T_\frac{M}{2}$, $d_S^{-\frac{n+2}{2}} v$ has the boundary expansion of order $d_S^b$ for any $b \in \mathbb{N}$. In addition, we assume in $T_\frac{M}{2}$,
for any $p, q, l, m\in \mathbb{N}$, it holds
\begin{align}\label{eq-v-pqlm}
r^p d_S^m \left\lvert  D_r^p D_{x^\prime_{n-k}}^q  D_{z_S}^l D_{d_S}^m (d_S^{-\frac{n+2}{2}} v) \right\rvert&\leq C(T, M, p, q, l, m).
\end{align}

 Denote
\begin{align*}
v_i(x^\prime_{n-k}, r)=\int_S v(x^\prime_{n-k}, r,\theta)\phi_i d\theta.
\end{align*}
Then for $r_0=\frac{M}{2}$, the summation
\begin{align}\label{eq-sum-vi-1}
\sum_{i=1}^\infty \frac{v_i(x^\prime_{n-k}, r_0) r^{\overline{m}_i} \phi_i}{r_0^{\overline{m}_i}}
\end{align}
has an expansion of form,  
\begin{align}\label{eq-HExp-1}
\sum_{l\in I, l<A}  H_{l, 0}(x_{n-k}^\prime,  \theta) r^l  +H_{A, 0}(x_{n-k}^\prime,  r, \theta) r^A,
\end{align}
where in $T_\frac{M}{2}\cap \{r<\frac{M}{4}\}$, all coefficients $H_{l, m}$'s  satisfy, that for any fixed $x_{n-k}^\prime,  r$,
and for any $p, q\in \mathbb{N}$,
\begin{align}\begin{split}\label{eq-Hlm-mp-1}
\|r^p D^p_r D_{x_{n-k}^\prime}^q  (H_{l, m})\|_{L^2(S)}&\leq C(T, M, l,m,p, q) ,\\
\|r^p D^p_r D_{x_{n-k}^\prime}^q L_S (H_{l, m}) \|_{L^2(S)}&\leq C(T, M, l,m,p, q).\end{split}
\end{align}
\end{lemma}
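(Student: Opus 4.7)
The strategy is to separate the Fourier series \eqref{eq-sum-vi-1} into a finite head with $\overline{m}_i<A$ and an infinite tail with $\overline{m}_i\geq A$, and to obtain the required estimates by combining (i) rapid decay of the coefficients $v_i(x^\prime_{n-k},r_0)$ in $i$ and (ii) the exponential smallness of $(r/r_0)^{\overline{m}_i-A}$ in the region $r<r_0/2$.

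First I would establish the decay of the coefficients. Setting $v_0(\theta):=v(x^\prime_{n-k},r_0,\theta)$, the assumption that $d_S^{-(n+2)/2}v$ has an expansion of order $d_S^b$ for every $b$ lets me apply $L_S$ iteratively while keeping $(-L_S)^k v_0$ vanishing to high enough order at $\partial S$ to stay in $L^2(S)$; combined with $\phi_i=O(d_S^{(n+2)/2})$ this kills the boundary terms when I integrate by parts repeatedly (essentially via Lemma \ref{lem-Tw}), yielding
\begin{align*}
v_i(x^\prime_{n-k},r_0)=\lambda_i^{-N}\!\int_S\bigl((-L_S)^{N}v_0\bigr)\phi_i\,d\theta,\quad |v_i(x^\prime_{n-k},r_0)|\leq C_N\lambda_i^{-N},
\end{align*}
for every $N\in\mathbb N$, with $C_N$ controlled uniformly in $x^\prime_{n-k}$ by \eqref{eq-v-pqlm} at $r=r_0$. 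Differentiating \eqref{eq-v-pqlm} in $x^\prime_{n-k}$ before integrating by parts gives the same decay for $D_{x^\prime_{n-k}}^q v_i(x^\prime_{n-k},r_0)$.

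Next, for each $l\in I$ with $l<A$ I would collect those $i$ with $\overline{m}_i=l$ (there are only finitely many, and since all $\overline{m}_i\in I$, the values of $l$ that actually contribute lie in $I$) to form
\begin{align*}
H_{l,0}(x^\prime_{n-k},\theta)=\sum_{\overline{m}_i=l}r_0^{-l}v_i(x^\prime_{n-k},r_0)\phi_i(\theta);
\end{align*}
being a finite linear combination of eigenfunctions, this is trivially $L^2(S)$-bounded with $L_S H_{l,0}=-l(l+k-2)H_{l,0}$ also $L^2(S)$-bounded, and $D_{x^\prime_{n-k}}^q$ acts only on $v_i(x^\prime_{n-k},r_0)$, which was already estimated above. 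The tail I define by
\begin{align*}
H_{A,0}(x^\prime_{n-k},r,\theta):=\sum_{\overline{m}_i\geq A}\frac{v_i(x^\prime_{n-k},r_0)}{r_0^{\overline{m}_i}}\,r^{\overline{m}_i-A}\phi_i(\theta).
\end{align*}
In the region $r<M/4=r_0/2$ the geometric factor $(r/r_0)^{\overline{m}_i-A}\leq 2^{-(\overline{m}_i-A)}$ provides exponential decay in $i$ (recall $\overline{m}_i\sim 2\sqrt{\lambda_i}$ and Theorem \ref{thm-Eigen-Growth} gives $\lambda_i\gtrsim i^{2/(k-1)}$), so combined with the polynomial decay of $v_i$ the series for $\|L_S H_{A,0}\|_{L^2(S)}^2\leq \sum_{\overline{m}_i\geq A}\lambda_i^2 r_0^{-2A}|v_i|^2 (r/r_0)^{2(\overline{m}_i-A)}$ converges uniformly in $(x^\prime_{n-k},r)$. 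The derivatives $r^pD_r^p$ bring out $(\overline{m}_i-A)^p$, and $(\overline{m}_i-A)^{2p}2^{-(\overline{m}_i-A)}\leq C(p)$, exactly as in the computation \eqref{eq-rDr-p} of Theorem \ref{thm-IntF}; derivatives in $x^\prime_{n-k}$ are absorbed into the decay estimate of $D_{x^\prime_{n-k}}^q v_i$.

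The main obstacle is the first step: justifying the iterated integration by parts that yields $|v_i(x^\prime_{n-k},r_0)|\leq C_N\lambda_i^{-N}$ with a constant independent of $x^\prime_{n-k}$. This hinges on confirming that each $(-L_S)^k v_0$ still vanishes fast enough at $\partial S$ so that no boundary contribution survives, which is precisely what the hypothesis that $d_S^{-(n+2)/2}v$ has a full $d_S$-expansion—together with the matching vanishing of $\phi_i$—is designed to give. Once this uniform decay is in hand, the rest of the proof is a bookkeeping variant of the $H_1$-argument from Theorem \ref{thm-IntF}.
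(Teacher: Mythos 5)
Your overall split of the series into a finite head with $\overline{m}_i<A$ and an infinite tail of order $r^A$ is the same as the paper's, and the use of $(r/r_0)^{\overline{m}_i-A}\leq 2^{-(\overline{m}_i-A)}$ to absorb the $(\overline{m}_i-A)^p$ factors coming from $r^pD_r^p$ exactly mirrors the argument around \eqref{eq-rDr-p}. However, the first step of your argument contains a genuine gap: the claim that $(-L_S)^N v_0\in L^2(S)$ for \emph{every} $N$, hence $|v_i|\leq C_N\lambda_i^{-N}$ for every $N$, is false. The indicial equation of $L_S$ has roots $\tfrac{n+2}{2}$ and $\tfrac{2-n}{2}$, so $L_S$ annihilates the leading power $d_S^{(n+2)/2}$ only at \emph{leading} order; since the subleading coefficients in the $d_S$-expansion of $v_0$ are not constrained to satisfy the indicial recursion of an eigenfunction, a single application of $L_S$ already produces a term of order $d_S^{n/2}$, and a second application produces $d_S^{n/2-2}=d_S^{(n-4)/2}$, which for $n=3$ is $d_S^{-1/2}\notin L^2(S)$. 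The paper itself, in the proof of Theorem~\ref{thm-IntF}, only iterates $T=(-L_S)^{1/2}$ up to the threshold $N\approx\tfrac{n+2}{2}$ for exactly this reason; arbitrary iteration is not available.

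The good news is that no such decay is needed. The paper's proof of this lemma is considerably more economical: since $r<r_0$ throughout $T_{M/2}$, one has $(r/r_0)^{\overline{m}_i-A}\leq1$ for $\overline{m}_i\geq A$, so Parseval gives directly
\begin{align*}
\bigl\lVert L_S H_{A,0}\bigr\rVert^2_{L^2(S)}\leq r_0^{-2A}\sum_{\overline{m}_i\geq A}\bigl(\lambda_i v_i(x^\prime_{n-k},r_0)\bigr)^2\leq r_0^{-2A}\bigl\lVert L_S v(x^\prime_{n-k},r_0,\cdot)\bigr\rVert^2_{L^2(S)},
\end{align*}
and $L_S v$ at $r=r_0$ is bounded in $L^2(S)$ uniformly in $x^\prime_{n-k}$ by \eqref{eq-v-pqlm} (taking $m\leq2$, $l\leq2$, $p=q=0$). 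This gives \eqref{eq-Hlm-mp-1} for $p=q=0$ with no appeal to coefficient decay or to Theorem~\ref{thm-Eigen-Growth} at all; the $q>0$ case follows by applying $D_{x^\prime_{n-k}}^q$ (only $v_i$ depends on $x^\prime_{n-k}$) and using \eqref{eq-v-pqlm} again, and the $p>0$ case uses the $2^{-(\overline{m}_i-A)}$ factor exactly as you wrote. If you replace your first step by this Parseval observation, the rest of your argument goes through.
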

\begin{proof}
First \eqref{eq-sum-vi-1} can be written as
\begin{align}\label{eq-sum-vi}
\sum_{\overline{m}_i<A} \frac{v_i(x^\prime_{n-k}, r_0)  \phi_i}{r_0^{\overline{m}_i}}r^{\overline{m}_i}+\left( \sum_{\overline{m}_i\geq A} \frac{v_i(x^\prime_{n-k}, r_0) r^{\overline{m}_i-A} \phi_i}{r_0^{\overline{m}_i}}\right) r^{A},
\end{align}
which is in the form of \eqref{eq-HExp-1}. 

Then we show \eqref{eq-Hlm-mp-1} when $p=q=0.$
As $r< r_0$, we derive that
\begin{align*}
&\left\lVert L_S  \sum_{\overline{m}_i\geq A} \frac{v_i(x^\prime_{n-k}, r_0) r^{\overline{m}_i-A} \phi_i}{r_0^{\overline{m}_i}} \right\rVert^2_{L^2(S)}\\
&\qquad=\sum_{\overline{m}_i\geq A} \left(  \frac{v_i(x^\prime_{n-k}, r_0) r^{\overline{m}_i-A} \lambda_i}{r_0^{\overline{m}_i}}\right)^2\\
&\qquad\leq \sum_{\overline{m}_i\geq A} \left(\lambda_i v_i(x^\prime_{n-k}, r_0)  \right)^2 r_0^{-2A}\\
&\qquad\leq C(T, M, A)||L_S v(x^\prime_{n-k}, r_0, \cdot)||_{L^2(S)}^2,
\end{align*}
where $L_S v$ is bounded by \eqref{eq-v-pqlm}.

As in \eqref{eq-sum-vi}, only $v_i$ depends on $x_{n-k}^\prime$, so if $p=0$, applying \eqref{eq-v-pqlm},  \eqref{eq-Hlm-mp-1} can be derived in the same way.

If $p\neq 0$, the only coefficient that depends on $r$ is $H_{A, 0}$. If $r\leq \frac{1}{2}r_0$, $(rD_r)^p H_{A, 0}$ can be estimated just as \eqref{eq-rDr-p}. 
\end{proof}

\begin{lemma}\label{lem-M4-M2}
If for some $a\in \mathbb{R}^+$, $v$ has a boundary expansion of order $r^a$ with $O(d_S^\frac{n+2}{2})$ coefficients in $T_\frac{M}{2}\cap \{r<\frac{M}{8}\}$,  and in addition, $d_S^{-\frac{n+2}{2}} v$ has a boundary expansion of order $d_S^b$ for any $b\in \mathbb{N}$ in $T_\frac{M}{2}$, then
$v$ satisfies a boundary expansion of order $r^a$ with $O(d_S^\frac{n+2}{2})$ coefficients in $T_\frac{M}{2}$.
\end{lemma}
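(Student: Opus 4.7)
The plan is to reuse the coefficients from the expansion on the smaller region and show that the remainder, extended in the only possible way to $T_{M/2}$, satisfies the required estimates on the annular region $\{M/8 \leq r < M/2\}$ by invoking the global $d_S$-expansion of $v$.

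First, by hypothesis, on $T_{M/2}\cap\{r<M/8\}$ we have
\begin{equation*}
v = \sum_{i\in J,\, i\leq a}\sum_{j=0}^{\tilde{N}_i} \tilde{c}_{i,j}(x^\prime_{n-k},\theta)\,r^i(\log r)^j + \tilde{R}_a(x^\prime_{n-k},r,\theta),
\end{equation*}
with $\tilde{c}_{i,j}$ independent of $r$ and with $d_S^{-\frac{n+2}{2}}\tilde{c}_{i,j}$, $d_S^{-\frac{n+2}{2}} r^{-a-\epsilon}\tilde{R}_a$ each possessing boundary expansions of order $d_S^b$ for any $b\in\mathbb N$ in the sense of Definition \ref{def-exp-ds}. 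Since the $\tilde{c}_{i,j}$ do not depend on $r$, they are already globally defined functions on $\{|x^\prime_{n-k}|<M/2\}\times S$. I therefore \emph{define} on all of $T_{M/2}$
\begin{equation*}
\tilde{R}_a := v - \sum_{i\in J,\, i\leq a}\sum_{j=0}^{\tilde{N}_i} \tilde{c}_{i,j}\,r^i(\log r)^j,
\end{equation*}
which by uniqueness agrees with the original remainder on $\{r<M/8\}$.

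Next, I verify the required estimates for $\tilde{R}_a$ on the annulus $A:=\{M/8\leq r<M/2\}\subseteq T_{M/2}$. There, $r$ is bounded away from $0$ and $\infty$, so $r^{-a-\epsilon}$ and each $r^i(\log r)^j$ are smooth in $r$ with all derivatives uniformly bounded; consequently $r^p D_r^p$ applied to these produces only bounded factors. The key input is the second hypothesis: $d_S^{-\frac{n+2}{2}}v$ admits a boundary expansion of order $d_S^b$ in the sense of Definition \ref{def-exp-ds} on all of $T_{M/2}$, which in particular yields the weighted estimates
\begin{equation*}
\bigl|r^p D_r^p D_{x^\prime_{n-k}}^q D_{z_S}^l D_{d_S}^m\bigl(d_S^{-\frac{n+2}{2}} v - d_S^{-\frac{n+2}{2}} v_{S,b}\bigr)\bigr|\leq C\,d_S^{b+\alpha-m}
\end{equation*}
for the partial sum $v_{S,b}$ of the $d_S$-expansion. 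Similarly, each $\tilde{c}_{i,j}$ has such an expansion (from the given hypothesis on the smaller region, transferred globally because the $\tilde{c}_{i,j}$ are $r$-independent).

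Combining these, $d_S^{-\frac{n+2}{2}} \tilde R_a = d_S^{-\frac{n+2}{2}} v - \sum d_S^{-\frac{n+2}{2}} \tilde c_{i,j}\, r^i(\log r)^j$ inherits a boundary expansion of order $d_S^b$ on $A$, and multiplication by the smooth, bounded factor $r^{-a-\epsilon}$ preserves this. Checking the mixed derivatives \eqref{eq-Rb} reduces to the Leibniz rule: only finitely many terms arise, each of which is a bounded $r$-function times a derivative of a function admitting the required $d_S$-expansion. This yields the estimates of Definition \ref{def-exp-r} for $\tilde R_a$ on $A$, and together with the original estimates on $\{r<M/8\}$ completes the proof on all of $T_{M/2}$.

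There is no serious obstacle: the lemma is essentially a bookkeeping statement, and the only point requiring care is confirming that the $r$-independent coefficients from the small-region expansion indeed extend globally (which is immediate from Definition \ref{def-exp-r}) and that the global $d_S$-expansion of $v$ survives subtraction of finitely many smooth-in-$r$ terms on the annulus $A$.
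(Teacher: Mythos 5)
Your proposal is correct and takes essentially the same approach as the paper: keep the $r$-independent coefficients $\tilde c_{i,j}$ from the small region, define the remainder on all of $T_{M/2}$ by subtraction, and use the global $d_S$-expansion of $d_S^{-\frac{n+2}{2}}v$ together with the boundedness of $r^{-a-\epsilon}$ and $r^i(\log r)^j$ on the annulus $\{M/8\leq r<M/2\}$ to verify Definition \ref{def-exp-r} there. You simply spell out the Leibniz-rule bookkeeping that the paper leaves implicit.
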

\begin{proof}
By the assumption, in $T_\frac{M}{2}\cap \{r<\frac{M}{8}\},$
\begin{align}\label{eq-v-M4}
v= \sum_{i\in J, i\leq a}\sum_{j=0}^{\tilde{N}_i} \tilde{c}_{i,j} r^i(\log r)^j +\tilde{R}_a,
\end{align}
where $d_S^{-\frac{n+2}{2}} \tilde{ c}_{i,j}$ and $d_S^{-\frac{n+2}{2}}  \cdot r^{-a-\epsilon}  \tilde{R}_a$ have boundary expansions up to order $d_S^b$ for any integer $b\in \mathbb{N}$. In addition, $\tilde{c}_{i, j}$'s are independent of $r$. 
Then in $T_{\frac{M}{2}}$, we express $v$ as
\begin{align*}
v= \sum_{i\in J, i\leq a}\sum_{j=0}^{\tilde{N}_i} \tilde{c}_{i,j} r^i(\log r)^j +\left(v- \sum_{i\in J, i\leq a}\sum_{j=0}^{\tilde{N}_i} \tilde{c}_{i,j} r^i(\log r)^j \right).
\end{align*}
For $r<\frac{M}{8}$, it is the same as \eqref{eq-v-M4}.
 If $r\geq \frac{M}{8}$, by the assumption,
\begin{align*}
d_S^{-\frac{n+2}{2}}r^{-a-\epsilon}\left(v- \sum_{i\in J, i\leq a}\sum_{j=0}^{\tilde{N}_i} \tilde{c}_{i,j} r^i(\log r)^j \right)
\end{align*}
has a boundary expansion up to order $d_S^b$ for any integer $b\in \mathbb{N}$, which confirms with Definition \ref{def-exp-r}.
\end{proof}

\bigskip

\end{document}